 \newtheorem{theorem}{Theorem}[section]
\newtheorem{lemma}{Lemma}[section]
\newtheorem{proposition}{Proposition}[section]
\newtheorem{corollary}{Corollary}[section]
\newtheorem{definition}{Definition}[section]
\newtheorem{remark}{Remark}[section]
\newtheorem{assumption}{Assumption}[section]
\newtheorem{example}{Example}[section]
\newcommand{\footremember}[2]{%
    \footnote{#2}
    \newcounter{#1}
    \setcounter{#1}{\value{footnote}}%
}
\def\R{\mathbb{R}}
\def\H{\mathbf{H}}
\def\Q{\mathbf{Q}}
\def\G{\mathbf{G}}
\def\A{\mathbf{A}}
\def\B{\mathbf{B}}
\def\C{\mathbf{C}}
\def\D{\mathbf{D}}
\def\I{\mathbf{I}}
\def\W{\mathbf{W}}
\def\U{\mathbf{U}}
\def\Y{\mathbf{Y}}
\def\Z{\mathbf{Z}}
\def\V{\mathbf{V}}
\def\X{\mathbb{X}}
\def\T{\mathbb{T}}
\def\F{\mathbb{F}}
\def\E{\mathbb{E}}
\def\N{\mathbb{N}^*}
\def\bx{{\bf x}}
\def\bq{{\bf q}}
\def\by{{\bf y}}
\def\bz{{\bf z}}
\def\bu{{\bf u}}
\def\bd{{\bf d}}
\def\bv{{\bf v}}
\def\bw{{\bf w}}
\def\bb{{\bf b }}
\def\ba{{\bf a }}
\def\bmu{{\bm \mu }}
\def\blambda{{\bm \lambda }}
\def\bnu{{\bm \nu }}
\def\supp{{\rm supp }}
\def\bg{{\boldsymbol g }}
\author{
	Shuai Li\footremember{1}{School of Mathematics and Statistics, Beijing Jiaotong University, Beijing, 100044, China. 
	\\ \indent\indent Emails: 24110488@bjtu.edu.cn, shlzhou@bjtu.edu.cn, zyluo@bjtu.edu.cn.} 
	\and Shenglong Zhou\footnotemark[1] 
	 \footremember{2}{Corresponding author.} 
	\and Ziyan Luo\footnotemark[1] 
}
\title{\vspace{-1.25cm}
Sparse Quadratically Constrained Quadratic Programming via Semismooth Newton Method 
\vspace{-0.0cm}}
\date{}
\begin{document}
\flushbottom
 
\maketitle
 
\vspace{-1.5cm}

\begin{abstract}  
\noindent \textbf{Abstract:}  Quadratically constrained quadratic programming (QCQP) has long been recognized as a computationally challenging problem, particularly in large-scale or high-dimensional settings where solving it directly becomes intractable. The complexity further escalates when a sparsity constraint is involved, giving rise to the problem of sparse QCQP (SQCQP), which makes conventional solution methods even less effective. Existing approaches for solving SQCQP typically rely on mixed-integer programming formulations, relaxation techniques, or greedy heuristics but often suffer from computational inefficiency and limited accuracy.  In this work, we introduce a novel paradigm by designing an efficient algorithm that directly addresses SQCQP. To be more specific, we introduce P-stationarity to establish first- and second-order optimality conditions of the original problem, leading to a system of nonlinear equations whose generalized Jacobian is proven to be nonsingular under mild assumptions. Most importantly, these equations facilitate the development of a semismooth Newton-type method that exhibits significantly low computational complexity due to the sparsity constraint and achieves a locally quadratic convergence rate. Finally, extensive numerical experiments validate the accuracy and computational efficiency of the algorithm compared to several established solvers.


\vspace{0.5cm}
 
\noindent{\bf \textbf{Keywords}:} Sparse quadratically constrained quadratic programming, P-stationarity, stationary equations, semismooth Newton method,   locally quadratic convergence rate
\end{abstract}

\numberwithin{equation}{section}

\maketitle

\section{Introduction}
\label{Introduction}
We aim to solve the following sparse quadratically constrained quadratic programming (SQCQP),
\begin{equation}
	\label{eq: SCQP}
	\begin{array}{lllll}	
		\underset{\bx\in \mathbb{R} ^n}{\min}  &\dfrac{1}{2}\bx^{\top} \Q_{0}\bx+\bq_{0}^{\top}\bx +c_0\\[1.5ex]
		\mathrm{s.t.}  &\dfrac{1}{2}\bx^{\top} \Q_{i}\bx+\bq_{i}^{\top}\bx + c_i\leq  0,~i=1,2,\ldots,k,\\ [1.5ex]
		  &\A\bx-\bb\leq  0,~  \| \bx \|_0 \le s, ~\bx \in \X,
\end{array}\tag{SQCQP}
\end{equation}
where $\mathbf{Q}_{i}$ is an $n$-order symmetric matrix, ${\bq_i\in\R^n}$, and ${c_i\in\R}$, ${i=0,1,\ldots,k}$, $\A\in \mathbb{R}^{m\times n}$ and ${\bb\in\R^m}$, $\|\bx\|_0$ is the $\ell_0$-norm of $\bx$, counting the number of its nonzero elements,  $s\ll n$ is an integer, and $\X$ is a simple closed convex set. Throughout the paper, we assume ${\X:=X_1\times X_2\times \cdots \times X_n}$, where each ${X_i\ni0}$ is a closed interval in $\mathbb{R}$.  The primary challenge of the SQCQP problem stems from sparsity constraint $\| \bx \|_0 \le s$ since the $\ell_0$-norm is non-convex and discontinuous, making the problem generally NP-hard. Importantly, this complexity remains even in the convex setting, where each $\Q_i$ is positive semi-definite. Another challenge is the involvement of quadratic constraints, especially in large-scale or high-dimensional settings. Nevertheless, the SQCQP problem finds extensive applications, including portfolio optimization with cardinality constraints on asset selection \cite{bai2016splitting, cui2013convex}, sparse generalized eigenvalue problems in high-dimensional data analysis and machine learning \cite{beck2016sparse, lindenbaum2021l0, sriperumbudur2011majorization}, and sparse array beamforming in signal processing \cite{hamza2019hybrid, huang2023sparse}. 
\subsection{Related work}
 To overcome the computational challenges posed by the $\ell_0$-norm, a variety of research efforts have focused on reformulating \eqref{eq: SCQP} as a mixed-integer programming (MIP) by introducing binary variables. This reformulation enables the application of various solution methods, including branch-and-bound techniques, evolutionary algorithms, and other heuristic-based approaches. A comprehensive overview of existing methods can be found in \cite{bertsimas2009algorithm, chang2000heuristics, di2012concave, streichert2004evolutionary} and the references therein. However, these methods often exhibit limited computational efficiency, particularly when applied to large-scale instances. Beyond the MIP-based reformulation, other existing approaches for solving the SQCQP problem can be broadly classified into two categories.

 {\it a) Relaxation methods.} One strategy for addressing the challenges stemmed from the $\ell_0$-norm is to relax it by a continuous or convex surrogate function \cite{ahn2017difference, chartrand2007exact, dong2019structural, fan2001variable}. An alternative line of research transfers the original problem into nonlinear programming (NLP) with complementarity constraints \cite{vcervinka2016constraint, feng2013complementarity, kanzow2021augmented, kanzow2022sparse, steffensen2024relaxation, steffensen2010new}. While this reformulation formally aligns with the standard NLP framework, it introduces significant theoretical and numerical challenges. Specifically, the resulting problems are inherently non-convex and degenerate, often violating classical constraint qualifications required for NLP \cite{burdakov2016mathematical}. Consequently, specialized techniques must be developed to ensure both theoretical rigor and computational tractability.

 {\it b) Greedy methods.} In contrast, greedy approaches are capable of directly handling the $\ell_0$-norm, with their development rooted in the field of compressed sensing. For problems involving only a single sparsity constraint set, a wide range of algorithms have developed, including first-order methods using gradient information  \cite{IHT, GP, OMP, CoSaMP, CGIHT}, as well as Newton-type algorithms that leverage second-order information \cite{GraSP, GraHTP, NTGP, NHTP,  wang2021extended, GPNP, sun2023gradient}.  However, compared to the case with a single sparsity constraint, research on problems involving additional constraints remains relatively limited. Existing studies mainly focus on scenarios with relatively simple constraint sets, such as the simplex set \cite{kyrillidis2013sparse}, non-negative constraints \cite{pan2017convergent}, box constraints \cite{cesarone2013new}, affine sets \cite{bauschke2014restricted, hesse2014alternating}, and symmetric sets \cite{beck2016minimization, lu2015optimization, li2020matrix}, with algorithms typically based on gradient projection techniques. For problems involving more complex nonlinear equality or inequality constraints, only a few works have been reported. For example, the authors in \cite{lu2013sparse} introduced a block coordinate descent algorithm using a penalty decomposition technique to handle such cases. Moreover, for problems involving semi-continuous variables and additional closed convex set constraints,  \cite{bai2016splitting} proposed an augmented Lagrangian algorithm again based on the penalty decomposition.

It is worth mentioning that most existing algorithms fall under the category of first-order methods. Although these methods are structurally simple and easy to implement, they often fail to achieve high solution accuracy and computational efficiency compared to Newton-type methods. In a recent study \cite{zhao2022},  an efficient Lagrange-Newton algorithm (LNA) was proposed for sparse NLP with additional equality constraints. The method can be viewed as a generalization of NHTP \cite{NHTP} using Newton's method to solve the stationary equations. It has been demonstrated that both algorithms achieve fast locally quadratic convergence rate under certain conditions. Despite the scarcity of Newton-type algorithms in constrained sparse optimization, the impressive computational efficiency of LNA and the well-established semi-smooth Newton methods for solving the Karush-Kuhn-Tucker (KKT) system in traditional NLP \cite{de1996semismooth, facchinei1997new, fischer1992special, qi1997semismooth} suggest the potential to develop an efficient second-order method for \eqref{eq: SCQP}, which motivates the research of this paper.
\subsection{Contribution}
The primary contribution of this paper lies in both the theoretical analysis and the development of a Newton-type algorithm for solving problem \eqref{eq: SCQP}. To the best of our knowledge, this is the first work to propose a direct numerical algorithm for this problem rather than relying on its reformulations. Our contributions can be summarized as follows.

  \textit{$A.$ Optimality analysis via P-stationarity.} We introduce the concepts of KKT points and P-stationary points to establish the first- and second-order optimality conditions for problem \eqref{eq: SCQP}, clarifying their connections to local minimizers. By leveraging the properties of the sparse projection operator and a nonlinear complementarity problem (NCP) function, we equivalently reformulate the P-stationarity condition as a system of stationary equations. This reformulation provides a crucial theoretical foundation for the development of a Newton-type method. However, besides the solution variables, these equations also include an unknown discrete index set that needs to be determined simultaneously. Consequently, solving these equations differs from the  standard nonlinear equation by employing the semismooth Newton-type methods. 

 \textit{$B.$ A semismooth Newton-type method with locally quadratic convergence.} It is important to note that, in addition to the solution variables, the stationary equations also involve an unknown discrete index set that must be determined simultaneously. As a result, solving these equations differs from solving standard nonlinear equations using a Newton-type method. Nevertheless, we successfully address this challenge by developing a semismooth Newton-type method, {SNSQP}.
The novel design ensures that each iteration produces an $s$-sparse solution $\bx^\ell$, namely, $\|\bx^\ell\|_0\leq s$. Consequently, although a system of linear equations must be solved at each step, the algorithm maintains a remarkably low computational complexity, enabling efficient large-scale computation. Furthermore,   we establish the nonsingularity of the generalized Jacobian of the stationary equations under mild assumptions, which guarantees a locally quadratic convergence rate of the proposed algorithm.

 \textit{$C.$ High numerical performance.} We conduct extensive numerical experiments on various applications using both synthetic and real-world datasets. Comparative evaluations against state-of-the-art algorithms and commercial solvers, such as CPLEX and GUROBI, demonstrate that {SNSQP} achieves superior computational efficiency and solution accuracy, highlighting its strong potential for a variety of large-scale applications.

\subsection{Organization}
The paper is organized as follows. The next subsection introduces key notation used in the paper. In Section \ref{Optimality analysis} , we establish the optimality conditions of   \eqref{eq: SCQP} using KKT points and P-stationary points, and then reformulate P-stationarity as a system of stationary equations. In Section \ref{Generalized Jacobian non-singularity},  the nonsingularity of the generalized Jacobian of the stationary equations is analyzed. In Section \ref{A semi-smooth Newton algorithm}, we propose a semi-smooth Newton method  and establish its locally quadratic convergence. Extensive numerical experiments and concluding remarks are given in the last two sections.

\begin{table}[!t]
	{\begin{tabular}{ll}
			\toprule
			\textbf{Notation} & \textbf{Description}\\
			\midrule
			$[n]$   &$:=\{1,2,\ldots,n\}$.\\
			$|t|$   & The absolute value of scalar $t$.\\
			
			$|T|$  & The cardinality of set $T\subseteq[n]$.\\
			$\overline{T}$  & The complementary set of $T$, namely, $\overline{T}=[n]\setminus T$.\\
			$x_{( i )}$  & The $i$th largest (in absolute value) entry of vector $\bx$.\\
			$\| \bx\| $  & The Euclidean norm of vector $\bx$.\\
			$\|\bx\|_\infty$ & The $\ell_\infty$-norm of vector $\bx$.\\
			$\supp(\bx)$  & $:=\{ i \in [n]: x_i \neq 0\}$, the support set of vector $\bx$.\\
			$\langle \bw,\bx  \rangle$ & The inner product of two vectors  $\bw $ and $\bx$, i.e., $\langle \bw,\bx  \rangle=\bw^\top\bx=\sum w_i x_i$.\\
$(\bw;\bx)$ & The vector formed by stacking  $\bw $ and $\bx$, i.e., $(\bw;\bx)= (\bw^\top~\bx^\top)^\top$.\\			
			$\mathbb{J} _s(\bx)$  & $:  = \{ J \subseteq [n]: |J|=s,J\supseteq \supp(\bx) \}.$\\
			$\bx_T$  & The subvector of $\bx$ containing elements indexed by $T$.\\
			$\A_{IJ}$ & The submatrix of $\A\in\R^{m\times n}$ whose rows and columns are indexed by $I$ and $J$.\\
			 & In particular, $\A_{:T}:=\A_{[m]T}$ and $\A_{T:}:=\A_{T[n]}$.\\
			$\X_{J}$ & $: = \times_{i\in J}X_i$, e.g., $\X_{\{1,3,4\}} =X_1\times X_3 \times X_4.$ \\
			
			$\mathbf{e}_i$ & The $i$-th column of identity matrix $\textbf{I}$.\\
			
                ${\rm bd}(\Omega)$ & The boundary of set $\Omega$.\\
                ${\rm int}(\Omega)$ & The interior of set $\Omega$.\\
			\bottomrule
	\end{tabular}}
     \caption{A list of notation.}
    \label{table: notation}
\end{table}
\subsection{Notation}
We end this section by introducing some notation to be used throughout the paper, with most of them summarized in Table \ref{table: notation}. In addition, we denote
\begin{equation}
	\label{eq: def-f-omega}
	\begin{aligned}
f_i( \bx )&:=\frac{1}{2}\bx^{\top}  \mathbf{Q}_{i}\bx+\mathbf{q}_{i}^{\top}\bx+c_i,~i=0,1,2,\ldots,k,\\
\F &:=\{ \bx\in \mathbb{R} ^n:~f_i\left( \bx \right)\leq 0,~i=1,2,\ldots,k,~\A\bx-\bb\leq  0 \},\\
\mathbb{S}&:= \{\bx \in \mathbb{R}^n:~\| \bx \|_0 \le s \},
 \end{aligned}	
\end{equation}
where $\mathbb{S}$ is known as the sparse set. Let ${\Pi _\Omega }(\bx)$ be the projection of $\bx$ onto set $\Omega$, namely,
$${\Pi _\Omega }(\bx) = {\rm argmin}_{\bz\in\Omega}~\|\bz-\bx\|^2.$$
Therefore,  ${\Pi _\mathbb{S}}(\bx)$ keeps the first $s$ largest (in absolute value) entries of $\bx$ and sets the remaining to be zeros.  Let $T_{\Omega}(\bx)$,  $N_{\Omega}(\bx)$, and $\widehat{N}_{\Omega}(\bx)$ be the Clarke tangent cone,  Clarke/limiting normal cone, and Fr{\'e}chet normal cone of $\bx$ at $\Omega$, respectively.  One can refer to \cite{rockafellar2009variational} for their definitions. It is easy to calculate that  
\begin{equation}
\label{eq: normal and tangent cone of X}
T_{[a,b]}(x) =   \begin{cases}
[0,+\infty),  & \text { if }~ x=a, \\
\mathbb{R}, & \text { if } ~ x\in(a,b), \\
(-\infty, 0], & \text { if }~ x =b,\end{cases}\qquad
N_{[a,b]}(x)  \in \begin{cases}(-\infty, 0], & \text { if }~ x=a, \\
\{0\}, & \text { if }~ x\in(a,b), \\
{[0,+\infty),} & \text { if }~ x=b.\end{cases} 
\end{equation}
Moreover, by  \cite[Theorem 2.1]{pan2015solutions}, we have
\begin{equation}
\label{eq: normal cone}
\widehat{N}_{\mathbb{S}}(\bx)= \begin{cases}
\mathbb{R}_{\overline{\Gamma}}^n:=\left\{\bx \in \mathbb{R}^n: \bx_{\Gamma}=0\right\}, & \text { if }~\|\bx\|_0=s, \\ 
\{0\}, & \text { if }~\|\bx\|_0<s.\end{cases}
\end{equation}
Let $f: \mathbb{R}^n \to \mathbb{R}^m$ be a locally Lipschitz continuous function. Then $f$ is differentiable almost everywhere by Rademacher's Theorem. By denoting $D_f$ as the set of points where $f$ is differentiable, the Clarke generalized Jacobian \cite{clarke1990optimization} of $f$ at $\mathbf{x} \in \mathbb{R}^n$ is defined as
$$
\begin{array}{l}
\partial f(\mathbf{x}) = \operatorname{co}\left\{ \lim _{\mathbf{x}^\ell \in D_f, \mathbf{x}^\ell \to \mathbf{x}} \nabla f(\mathbf{x}^\ell) \right\},
\end{array}$$
where $\operatorname{co}(\Omega)$ represents the convex hull of $\Omega$. Let $\mathcal{U} \subseteq \mathbb{R}^n$ be an open set and $f: \mathcal{U} \to \mathbb{R}^m$ be a locally Lipschitz continuous function. We say that $f$ is semismooth at $\mathbf{x} \in \mathcal{U}$ if it is directionally differentiable at $\mathbf{x}$, and for every $\Delta \mathbf{x} \to 0$ and $\mathbf{H} \in \partial f(\mathbf{x} + \Delta \mathbf{x})$ it satisfies
$$
f(\mathbf{x} + \Delta \mathbf{x}) - f(\mathbf{x}) - \mathbf{H} \Delta \mathbf{x} = o(\|\Delta \mathbf{x}\|).
$$
Furthermore, if the above equation is replaced by
$$
f(\mathbf{x}+\Delta \mathbf{x})-f(\mathbf{x})-\mathbf{H} \triangle \mathbf{x}=O\left(\|\Delta \mathbf{x}\|^2\right),
$$
then $f$ is said to be strongly semismooth at $\mathbf{x}$. 
Finally,  Fischer-Burmeister (FB) function \cite{fischer1992special} $\phi:\mathbb{R} \times \mathbb{R} \rightarrow \mathbb{R}$ is defined by
$$
\phi(a, b):=\sqrt{a^2+b^2}-a-b .
$$
It is well-known that the FB function is a  nonlinear complementarity problem (NCP) function that satisfies  
$\phi \left( a,b \right) =0$ if and only if  $a\geq 0,~ b\geq  0,$ and $ab=0.$

\section{Optimality Analysis}

\label{Optimality analysis}
In this section, we analyze the optimality conditions of  (\ref{eq: SCQP}) by introducing P-stationarity, which is then  equivalently formulated as stationary equations using the sparse projection operator and the FB function. This serves as a fundamental theoretical basis for algorithm design.
\subsection{First-order optimality conditions}
The Lagrangian function of  \eqref{eq: SCQP} is,
\begin{equation}
L\left( \bx,\bmu,\blambda \right):=f_0\left( \bx \right)+\sum_{i=1}^k{\mu _if_i\left( \bx \right)}+\langle \blambda, \A\bx-\bb \rangle,
\end{equation}
 where $\bmu \in \mathbb{R}^k$ and $\blambda \in \mathbb{R}^m $ are Lagrange multipliers. Given point $(\bx^*,\bmu^*,\blambda^*)  \in  \mathbb{R}^{n}\times\mathbb{R}^k \times \mathbb{R}^m$,  hereafter, we always denote
 \begin{align*}
\bg^* :=\nabla _x L\left( \bx^*,\bmu ^*,\blambda ^* \right), ~~ 
\H^* :=\nabla_{xx}^2 L\left( \bx^*,\bmu ^*,\blambda ^* \right),~~
\Gamma_*:=\supp(\bx^*),~~\X_*:=\X_{{\Gamma_*}}.
 \end{align*}
\begin{definition}[{\bf KKT Points}]
We call $\bx^*$ a KKT point of \eqref{eq: SCQP} if there is $(\bnu^*,\bmu^*,\blambda^*)  \in  \mathbb{R}^{n}\times\mathbb{R}^k \times \mathbb{R}^m $ such that,
\begin{equation}
	\label{eq: BKKT}
 \begin{cases}
		-\bg^* - \bnu^* \in \widehat{N}_{\mathbb{S}}\left( \bx^* \right),\\
	\bnu^*_{\Gamma_*}\in N_{\X_*}( \bx_{{\Gamma_*}}^{*} ),~ \bnu^*_{\overline{\Gamma}_*}=0, ~\bx_{\Gamma_*}^* \in \X_{\Gamma_*},\\
			f_i\left( \bx^* \right) \leq  0, ~\mu_i ^*\geq 0, ~\mu _{i}^{*}f_i\left( \bx^* \right) =0, ~i\in[k],\\		
		\A\bx^*-\bb\leq  0, ~\blambda ^*\geq 0,~    \langle {\blambda ^*}, \A\bx^*-\bb\rangle =0.			
	\end{cases}  
\end{equation}
\end{definition}
To derive the relationship between a KKT point and a local minimizer of \eqref{eq: SCQP}, we need the following restricted linear independent constraint qualification (LICQ) condition.
\begin{assumption}
	[{\bf Restricted LICQ}]
	\label{assum_RLICQ}
	Let $\bx^*\in \F  \cap \mathbb{S} \cap \X $ and 
	\begin{align}
		\mathcal{A}_1\left(\bx^*\right)&:=\left\{i \in[k]:f_i\left(\bx^*\right)=0\right\},\\
		\mathcal{A}_2\left(\bx^*\right)&:=\left\{i \in[m]: \langle\ba_i, \bx^*\rangle=\bb_i\right\}, \\
		\mathcal{A}_3\left(\bx^*; T\right)&:=\left\{i \in {T} :x^*_i \in b d\left(X_i\right)\right\}.
	\end{align}
Assume the following groups of vectors are linearly independent for any $T \in \mathbb{J}_s(\bx^*)$,
	\begin{align}\label{linearly-independent-Gamma}
	\Big\{(\nabla f_i(\bx^*))_{{\Gamma_*}}:i\in \mathcal{A} _1\left( \bx^* \right)\Big\}  \cup \Big\{ \left( \ba_{i} \right) _{{\Gamma_*}}: i\in \mathcal{A} _2\left( \bx^* \right) \Big\} \cup \Big\{ \left( \mathbf{e}_i  \right) _{{\Gamma_*}}:  i\in \mathcal{A} _3\left( \bx^* ;T\right)\Big\},\end{align} 
	where $ \ba_{i}$ is a column vector formed by the $i$th row of $\A$.
\end{assumption}
\begin{theorem}	\label{thm_first order BKKT}
	Let $\bx^*$ be a local minimizer of (\ref{eq: SCQP}) and Assumption \ref{assum_RLICQ} hold  at $\bx^*$. Then there is a unique $(\bnu^*,\bmu^*,\blambda^*)  \in  \mathbb{R}^{n}\times\mathbb{R}^k \times \mathbb{R}^m $ such that $\bx^*$ is a KKT point of (\ref{eq: SCQP}).
\end{theorem}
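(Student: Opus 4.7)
The plan is to reduce (\ref{eq: SCQP}) to a classical NLP by freezing the support of $\bx^*$ and then invoking the standard KKT theorem under LICQ. For any fixed $T \in \mathbb{J}_s(\bx^*)$, I consider the reduced problem on $\bz \in \mathbb{R}^{|T|}$ obtained by pinning all coordinates outside $T$ to zero: since zero-padding back to $\mathbb{R}^n$ automatically gives $\|\cdot\|_0 \le |T|=s$ and lands inside $\X$, local optimality of $\bx^*$ transfers to local optimality of $\bx^*_T$ in the reduced NLP. The subtle point is that the restricted LICQ in Assumption \ref{assum_RLICQ} must be shown to imply the ordinary LICQ of this reduced NLP. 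This uses the observation that $(\mathbf{e}_i)_{\Gamma_*}=0$ for every $i \in T \setminus \Gamma_*$, so the Assumption can hold only if $\mathcal{A}_3(\bx^*; T) \subseteq \Gamma_*$ --- equivalently, $0 \in {\rm int}(X_i)$ and hence $N_{X_i}(0)=\{0\}$ for every $i \in T \setminus \Gamma_*$.

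Classical KKT on the reduced NLP then yields multipliers $\bmu^T, \blambda^T \ge 0$ and $\hat{\bnu}^T \in N_{\X_T}(\bx^*_T)$ satisfying the reduced stationarity $(\bg^*)_T + \hat{\bnu}^T = 0$ with complementarity. I lift these by setting $\bmu^* := \bmu^T$, $\blambda^* := \blambda^T$, $\bnu^*_T := \hat{\bnu}^T$, and $\bnu^*_{\overline{T}} := 0$. When $\|\bx^*\|_0 = s$, the set $T = \Gamma_*$ is unique and the vector $-\bg^*-\bnu^*$ already lies in $\widehat{N}_{\mathbb{S}}(\bx^*) = \{\by : \by_{\Gamma_*} = 0\}$. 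When $\|\bx^*\|_0 < s$, one has $\widehat{N}_{\mathbb{S}}(\bx^*)=\{0\}$, so I must additionally argue that $\bg^*_{\overline{\Gamma}_*}=0$; for each $j \in \overline{\Gamma}_*$ I pick some $T \in \mathbb{J}_s(\bx^*)$ containing $j$ and read off the reduced stationarity at index $j$, which combined with $N_{X_j}(0)=\{0\}$ forces $\bg^*_j = 0$. The remaining items in (\ref{eq: BKKT}) --- primal feasibility, the sign of multipliers, and the complementarity conditions --- transfer directly from the reduced KKT.

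For uniqueness, suppose $(\bnu, \bmu, \blambda)$ and $(\bnu', \bmu', \blambda')$ both satisfy (\ref{eq: BKKT}). Complementarity forces $\mu_i = \mu_i' = 0$ for $i \notin \mathcal{A}_1(\bx^*)$ and $\lambda_j = \lambda_j' = 0$ for $j \notin \mathcal{A}_2(\bx^*)$, while the product structure of $N_{\X_*}(\bx^*_{\Gamma_*})$ together with (\ref{eq: normal and tangent cone of X}) yields $\nu_i = \nu_i' = 0$ for every $i \in \Gamma_*$ with $x^*_i \in {\rm int}(X_i)$. Restricting the stationarity line to $\Gamma_*$ for both triples and subtracting expresses $0$ as a linear combination of $\{(\nabla f_i(\bx^*))_{\Gamma_*}\}_{i \in \mathcal{A}_1}$, $\{(\ba_j)_{\Gamma_*}\}_{j \in \mathcal{A}_2}$, and $\{(\mathbf{e}_i)_{\Gamma_*}\}_{i \in \mathcal{A}_3(\bx^*; \Gamma_*)}$ with coefficients $\mu - \mu'$, $\lambda - \lambda'$, $\nu - \nu'$, so the restricted LICQ forces all of them to vanish. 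The main obstacle I anticipate is the case $\|\bx^*\|_0 < s$, where a single reduced NLP does not capture the full first-order condition: one must iterate over multiple $T \in \mathbb{J}_s(\bx^*)$ and crucially exploit the implicit consequence of restricted LICQ that $0$ lies in the interior of $X_j$ for every $j \in \overline{\Gamma}_*$, without which $\bnu^*_{\overline{\Gamma}_*}$ could fail to be zero.
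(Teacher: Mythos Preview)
Your approach is essentially the paper's: reduce to a smooth NLP on a fixed support $T\in\mathbb{J}_s(\bx^*)$, apply classical KKT under LICQ, then vary $T$ to handle $\|\bx^*\|_0<s$. One step should be made explicit: when you read off $\bg^*_j=0$ using a new $T'\ni j$, the reduced stationarity there involves $(\bmu^{T'},\blambda^{T'})$, not the $(\bmu^*,\blambda^*)$ you fixed from the first $T$. You need to argue these coincide before using them; this follows by restricting each reduced stationarity to $\Gamma_*$ and invoking Assumption~\ref{assum_RLICQ}---the same mechanism as your uniqueness argument, just pulled earlier. The paper handles this by asserting ``Assumption~\ref{assum_RLICQ} indicates the uniqueness of $(\bmu,\blambda,\bnu)$'' across different $T$, which is the same step.

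Your observation that Assumption~\ref{assum_RLICQ} forces $0\in{\rm int}(X_j)$ for every $j\in\overline{\Gamma}_*$ (since otherwise $(\mathbf{e}_j)_{\Gamma_*}=0$ would appear in the allegedly independent family) is a genuine clarification. The paper's proof implicitly relies on this when it combines the relations $[\nabla_x L+\bnu]_{T\setminus\Gamma_*}=0$ over different $T$ and declares $\bnu^*_{\overline{\Gamma}_*}=0$: without $N_{X_j}(0)=\{0\}$ the component $\nu_j$ from the reduced KKT need not vanish, and the passage to $\bg^*_{\overline{\Gamma}_*}=0$ would not go through. So your route is the same skeleton with this hidden hypothesis surfaced.
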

\begin{proof} Let ${\bx^*}$ be a local minimizer of (\ref{eq: SCQP}). Then there is a neighbourhood $\mathbb{N}^*$ of $\bx^*$ satisfying
	\begin{equation*}
 f_0(\bx^*)\leq  f_0(\bx),~~ \forall~ \bx\in\F \cap \mathbb{S} \cap \X \cap \mathbb{N}^*.
	\end{equation*}
Let $\mathbb{S}_T:=\{\bx\in\R^n:\bx_{\overline{T}} = 0,~\bx_{T}\in \X_{T}\}$ for any given ${T \in \mathbb{J}_s(\bx^*)}$. Then one can verify that $\mathbb{S}_T\subseteq (\mathbb{S} \cap \X)$, resulting in $f_0(\bx^*)\leq  f_0(\bx)$ for any $ \bx\in  \F \cap \mathbb{S}_T \cap \mathbb{N}^*$. This indicates that ${\bx^*}$  it is also a local minimizer of the problem,
	\begin{equation}
		\label{eq2.1}
		\begin{aligned}
			 \min ~  f_0(\bx), ~~ {\rm s.t.}  ~   \bx\in\F,~ \bx_{\overline{T}} = 0,~\bx_{T}\in \X_{T},
		\end{aligned}
	\end{equation}
for any given ${T \in \mathbb{J}_s(\bx^*)}$.  By $\Gamma_*\subseteq T$,   Assumption \ref{assum_RLICQ} indicates that
	\begin{align*} 
	\Big\{(\nabla f_i(\bx^*))_{T}:i\in \mathcal{A} _1\left( \bx^* \right)\Big\}  \cup \Big\{ \left( \ba_{i} \right) _{T}: i\in \mathcal{A} _2\left( \bx^* \right) \Big\} \cup \Big\{ \left( \mathbf{e}_i  \right) _{T}:  i\in \mathcal{A} _3\left( \bx^* ;T\right)\Big\}\end{align*} 
are linearly independent, namely,  the LICQ holds at $\bx^*$ for (\ref{eq2.1}). Then by \cite[Theorem 1]{wachsmuth2013licq}, there exist a unique $(\bnu,\bmu,\blambda,\bm{\gamma})  \in \mathbb{R}^{n}\times \mathbb{R}^k \times \mathbb{R}^m \times \mathbb{R}^{n-s}$, such that 
    \begin{equation}
    	\label{eq2.2}
     \begin{cases}
    		\nabla _xL\left( \bx^*,\bmu,\blambda \right) + \bnu +\sum_{i\in \overline{T}}{{\gamma}_{i}\mathbf{e}_i}=0,\\
    		\bnu_T\in N_{\X_{T}}\left( \bx_{T}^{*} \right) ,~\bnu_{\overline{T}}=0,~ \bx_T^* \in \X_T,\\
    		f_i\left( \bx^* \right) \leq  0, ~\mu_i\geq 0, ~\mu _{i}f_i\left( \bx^* \right) =0, ~i\in[k],\\ 
    		\A\bx^*-\bb\leq  0, ~\blambda\geq 0,~\langle {\blambda},   \A\bx^*-\bb \rangle=0.
    	\end{cases} 
    \end{equation}
If $\|\bx^*\|_0=s$, then $\mathbb{J}_s(\bx^*) = \{ \Gamma_* \}$ and thus $T=\Gamma_*$. By letting $(\bnu^*,\bmu^*,\blambda^*) = (\bnu,\bmu,\blambda)$, we obtain $-\bg^* -  \bnu^* =\sum_{i\in \overline{\Gamma}_*}{ \gamma_{i}\mathbf{e}_i} \in \mathbb{R}^n_{\overline{\Gamma}_*}=\widehat{N}_{\mathbb{S}}(\bx^*)$ from \eqref{eq: normal cone} and $\bnu^*_{\overline{\Gamma}_*}=0$.

If $\|\bx^*\|_0<s$, then for any  $T \in \mathbb{J}_s(\bx^*)$,  we rewrite the first row in (\ref{eq2.2}) as follows,
    \begin{equation}
    	\label{eq2.3}
    	\begin{cases}
    		\left[ \nabla _xL\left( \bx^*,\bmu,\blambda \right)  + \bnu\right] _{\Gamma_*}=0,\\
    		\left[ \nabla _xL\left( \bx^*,\bmu,\blambda \right) +\bnu \right] _{T\backslash\Gamma_*}=0,\\
    		\left[ \nabla _xL\left( \bx^*,\bmu,\blambda \right) +\bnu+\sum_{i\in \overline{T}} \gamma_{i}\mathbf{e}_i  \right] _{\overline{T}}=0.\\
    	\end{cases}
    \end{equation}
    Assumption \ref{assum_RLICQ} indicates the uniqueness of $(\bmu,\blambda,\bnu)$ from the first equation of (\ref{eq2.2}). Therefore, we let $(\bnu^*,\bmu^*,\blambda^*) = (\bnu,\bmu,\blambda)$. Since $\cup_{T\in \mathbb{J} \left( x^* \right)}(T\backslash\Gamma_*)=\overline{\Gamma}_*$, it follows from the second row of (\ref{eq2.3}) that $\left( \bg^* +  \bnu \right) _{\overline{\Gamma}_*}=0$.  This together with  the first row of (\ref{eq2.3}) shows that $\bg^*+  \bnu^*  = 0\in\widehat{N}_{\mathbb{S}}(\bx^*)$ from \eqref{eq: normal cone}.  Furthermore,    $\bnu^*_{\overline{T}}=0$  for any $T \in \mathbb{J}_s(\bx^*)$ suffices to $\bnu^*_{\overline{\Gamma}_*}=0$.
    
Therefore,  both cases lead to the first condition in \eqref{eq: BKKT} and $\bnu^*_{\overline{\Gamma}_*}=0$. Then the other conditions in \eqref{eq: BKKT} can be ensured by \eqref{eq2.2} with  $ (\bnu,\bmu,\blambda)=(\bnu^*,\bmu^*,\blambda^*)$.  
\end{proof}
\begin{example} Given $c\geq0$, consider the following problem,
	 \begin{equation}
	 	\label{eq: counterexample}
	 	\begin{array}{lllll}	
	 		\underset{\bx\in \mathbb{R} ^3}{\min}  & (x_1-1)^2+(x_2+1)^2+(x_3-1)^2\\
	 		\mathrm{s.t.}  & x^2_1+(x_2-1)^2-3\leq0,\\
	 		& x_2^2+(x_3-c)^2-(1-c)^2\leq0,\\
	 		& x_1+ x_2 + x_3-2\leq0,\\
	 		&\| \bx \|_0 \le 2, ~-2\leq x_i\leq2,~ i=1,2,3.
	 	\end{array}
	 \end{equation}
	 The first inequality constraint indicates $x_2\geq 1-\sqrt{3}>-1$, which together with $\| \bx \|_0 \le 2$ indicates that $\bx^*=(1,0,1)^{\top}$ is a global minimizer of problem (\ref{eq: counterexample}) and hence $\Gamma_*=\{1,3\}$, $\mathcal{A}_1(\bx^*)=\{2\}$, and $\mathcal{A}_2(\bx^*)=\{1\}$. Then to ensure the restricted LICQ, we need to ensure the linear independence of vectors $(0~2(1-c))^\top$ and $(1~1)^\top$. Clearly, they are linearly independent if $c\neq1$ and dependent if $c=1$. However, irrespective of whether the restricted LICQ holds, one can always find that ${\mu^*=0}$, ${\lambda^*=0}$ and ${\nu^*=0}$ satisfying  (\ref{eq: BKKT}). Therefore, $\bx^*$ is KKT point. This example demonstrates that the restricted LICQ is a sufficient, but not necessary, condition for Theorem \ref{thm_first order BKKT}.
\end{example}
\begin{remark}
It is worth mentioning that the restricted LICQ plays a central role in the subsequent theoretical analysis; therefore, we illustrate the feasibility of this assumption in practice through two special cases of (\ref{eq: SCQP}). When $\X=\R^n$ and $k=0$ (i.e., no quadratic constraints), the restricted LICQ reduces to $\A_{\mathcal{A}_2\left(\bx^*\right)\Gamma_*}$ having a full row rank, requiring $|\mathcal{A}_2\left(\bx^*\right)|\leq |\Gamma_*|\leq s$. When only the box constraint $\bx \in \X$ is present, Assumption \ref{assum_RLICQ} holds at any feasible point if $0 \in \operatorname{int}(\X)$.
\end{remark}

According to Theorem \ref{thm_first order BKKT}, a KKT point is closely related to a local minimizer of \eqref{eq: SCQP}. However, computing it directly is intractable. To address this, we introduce an alternative point that can be efficiently obtained through a well-designed numerical algorithm. This point is defined based on the projection onto sparse set $\mathbb{S}$, which is why we refer to it as a  $P$-stationary point, where  $P$ represents the projection.
\begin{definition}
	\label{df1}
	We call $\bx^* \in \mathbb{R}^n$  a $P$-stationary point of   (\ref{eq: SCQP}) associated with a constant $\tau>0$ if there is  $(\bnu^*,\bmu^*,\blambda^*)  \in  \mathbb{R}^{n}\times\mathbb{R}^k \times \mathbb{R}^m $ such that,
	\begin{equation}
		\label{eq2.5}
		  \begin{cases}
			\bx^*~=\Pi _{\mathbb{S}}\left(\bx^*-\tau (\bg^* +   \bnu^* ) \right),\\
\bx_{\Gamma_*}^*=\Pi_{\X_*}( \bx^*_{\Gamma_*}+\bnu^*_{\Gamma_*}),~\bnu^*_{\overline{\Gamma}_*}=0,\\					f_i\left( \bx^* \right) \leq  0, ~\mu_i ^*\geq 0, ~\mu _{i}^{*}f_i\left( \bx^* \right) =0, ~i\in[k],\\		
		\A\bx^*-\bb\leq  0, ~\blambda ^*\geq 0,~\langle {\blambda ^*}, \A\bx^*-\bb\rangle=0.
		\end{cases} 
	\end{equation}
\end{definition}
 Recalling the definition of the projection operator, the conditions in the first two rows of \eqref{eq2.5} are equivalent to
\begin{equation} \label{eq-x*-x-gamma*}
	\begin{cases}
		\bx^*~=\underset{\mathbf{u}\in \mathbb{S}}{\mathrm{arg}\min}~\frac{1}{2}\left\| \mathbf{u}-\left( \bx^*-\tau (\bg^* +   \bnu^* ) \right) \right\| ^2,\\
		\bx_{\Gamma_*}^*=\underset{\mathbf{v}\in \X_*}{\mathrm{arg}\min}~\frac{1}{2}\left\| \mathbf{v}-\bx^*_{\Gamma_*}-\bnu^*_{\Gamma_*}\right\| ^2, ~\bnu^*_{\overline{\Gamma}_*}=0.
	\end{cases} 
\end{equation}
According to the generalized Fermat’s rule \cite[Theorem 10.1] {rockafellar2009variational}, the optimality conditions of the above two problems are exactly the ones presented in the first two rows of \eqref{eq: BKKT}. Therefore, a P-stationary point must be a KKT point. Now using the FB function, the two complementary conditions in (\ref{eq2.5}) can be written as
\begin{equation} \label{def-varphi-phi}
{\varphi}\left( \bx,\bmu \right) :=\left[ \begin{array}{c}
	\phi \left( -f_1\left( \bx \right),\mu _1 \right)\\
	\vdots\\
		\phi \left( -f_k\left( \bx \right),\mu _k \right)\\
\end{array} \right]=0 ,\qquad {\psi}\left( \bx,\blambda \right) :=\left[ \begin{array}{c}
		\phi \left( b_1 -\langle \ba_{1}, \bx \rangle ,\lambda _1 \right)\\
	\vdots\\
		\phi \left( b_m- \langle \ba_{m}, \bx \rangle  ,\lambda _m \right)\\
\end{array} \right]=0.
\end{equation}
According to \cite[Lemma 2.2] {beck2013sparsity} of ${\Pi _\mathbb{S}}(\cdot)$, the first condition in \eqref{eq2.5} is equivalent to $\bx^* \in \mathbb{S}$ and 
\begin{equation}
	\label{eq: proj-lemma}
	|g^*_i+v^*_i|\begin{cases}
	=0,& \text{if}~i\in\Gamma_*,\\
		\leq  x^*_{(s)}/\tau, & \text{if}~i\notin\Gamma_*.
	\end{cases}
\end{equation}
The above condition immediately results in $\bg_{\Gamma_*}^*+\bnu^*_{\Gamma_*}=0$. If  $||\bx^*||_0=s$, then $ \tau\|\bg^*_{\overline{{\Gamma}}_*}\|_{\infty}<x^*_{(s)}$. If $||\bx^*||_0<s$, then $x^*_{(s)} = 0$, which together with $\bnu^*_{\overline{\Gamma}_*}=0$ from  \eqref{eq2.5} leads to $ \bg^*_{\overline{\Gamma}_*}=0$.  Based on these facts, combining \eqref{eq2.5}, \eqref{eq-x*-x-gamma*}, and \eqref{def-varphi-phi}, one can observe that $\bx^*$ is $P$-stationary point if and only if $\bx^* \in \mathbb{S}$ and  
 \begin{equation}
		\label{eq2.5-1}
		  \begin{cases}
		  \begin{cases}
			\bg_{\Gamma_*}^*+\bnu^*_{\Gamma_*}=0, ~~\tau\|\bg^*_{\overline{{\Gamma}}_*}\|_{\infty}<x^*_{(s)}, & \text{if}~\|\bx^*\|_0=s,\\
			\bg^*_{\Gamma_*}+\bnu^*_{\Gamma_*}=0,~~\bg^*_{\overline{\Gamma}_*}=0,& \text{if}~\|\bx^*\|_0<s,
			\end{cases}\\ 
			 ~~~\bx^*_{\Gamma_*}=\Pi _{{\X_*}}( \bx^*_{\Gamma_*}+\bnu^*_{\Gamma_*}),~\bnu^*_{\overline{\Gamma}_*}=0,\\
			 ~~~{\varphi}\left( \bx^*,\bmu^* \right)=0, ~~{\psi}\left( \bx^*,\blambda^* \right)=0,
			 \end{cases} 
	\end{equation}
 which further results in the following optimality conditions for (\ref{eq: SCQP}).
\begin{theorem}[{\bf First-order necessary condition}]	\label{thm_first order necessary stationary}
	Let $\bx^*$ be a local minimizer of (\ref{eq: SCQP}) and  Assumption \ref{assum_RLICQ} hold at $\bx^*$. Then there is a unique  $(\bnu^*,\bmu^*,\blambda^*)  \in  \mathbb{R}^{n}\times\mathbb{R}^k \times \mathbb{R}^m $ such that $\bx^*$ is a $P$-stationary point of (\ref{eq: SCQP}) for any $\tau \in \left( 0,\overline{\tau} \right)$, where
	\begin{equation}
		\label{eq2.7}
		\bar{\tau}:= \begin{cases}\frac{x^*_{(s)}}{ \|\bg^*_{\overline{\Gamma}_*} \|_{\infty}}, & \text { if }\left\|\bx^*\right\|_0=s, \\ 
		+\infty, & \text { if }\left\|\bx^*\right\|_0<s.
		\end{cases}
	\end{equation}
\end{theorem}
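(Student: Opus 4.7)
The plan is to build directly on Theorem~\ref{thm_first order BKKT} and on the equivalence, already spelled out in the paragraph preceding the statement, between the projection form of $P$-stationarity in \eqref{eq2.5} and the componentwise form in \eqref{eq2.5-1}. Since Theorem~\ref{thm_first order BKKT} produces, under the same hypotheses, a unique triple $(\bnu^*,\bmu^*,\blambda^*)$ satisfying the KKT system \eqref{eq: BKKT}, the task reduces to verifying each of the three blocks of \eqref{eq2.5-1} for this same triple, and to checking that all the work can be done with $\tau$ in the interval $(0,\bar\tau)$.

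First I would dispose of the last two rows of \eqref{eq2.5-1}. The complementarity lines in \eqref{eq: BKKT} say exactly that $-f_i(\bx^*)\ge 0$, $\mu_i^*\ge 0$, and $\mu_i^* f_i(\bx^*)=0$, and analogously for $\blambda^*$ and $\A\bx^*-\bb$; so the characterising property of the Fischer--Burmeister function recalled in the preliminaries forces $\varphi(\bx^*,\bmu^*)=0$ and $\psi(\bx^*,\blambda^*)=0$. Next, from $\bnu^*_{\Gamma_*}\in N_{\X_*}(\bx^*_{\Gamma_*})$ with $\bx^*_{\Gamma_*}\in\X_{\Gamma_*}$ and the fact that $\X_*$ is a Cartesian product of closed intervals (so the projection onto $\X_*$ is the standard componentwise clip), the generalised Fermat rule gives
\begin{equation*}
\bx^*_{\Gamma_*}=\Pi_{\X_*}\!\bigl(\bx^*_{\Gamma_*}+\bnu^*_{\Gamma_*}\bigr),
\end{equation*}
and combined with $\bnu^*_{\overline{\Gamma}_*}=0$ this is exactly the middle line of \eqref{eq2.5-1}.

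The core step is the first block of \eqref{eq2.5-1}, together with the role of $\tau$. I would split on whether $\|\bx^*\|_0$ equals $s$ or is strictly less. If $\|\bx^*\|_0<s$, formula \eqref{eq: normal cone} gives $\widehat N_{\mathbb S}(\bx^*)=\{0\}$, so the inclusion $-\bg^*-\bnu^*\in\widehat N_{\mathbb S}(\bx^*)$ in \eqref{eq: BKKT} collapses to $\bg^*+\bnu^*=0$; restricting to $\Gamma_*$ and $\overline{\Gamma}_*$ yields both relations in the second branch of \eqref{eq2.5-1}, and the corresponding $\bar\tau=+\infty$ in \eqref{eq2.7} puts no restriction on $\tau$. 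If $\|\bx^*\|_0=s$, then \eqref{eq: normal cone} gives $\widehat N_{\mathbb S}(\bx^*)=\mathbb R^n_{\overline{\Gamma}_*}$, hence $(\bg^*+\bnu^*)_{\Gamma_*}=0$; because $\bnu^*_{\overline{\Gamma}_*}=0$, what remains is a free vector $\bg^*_{\overline{\Gamma}_*}$, and the strict inequality $\tau\|\bg^*_{\overline{\Gamma}_*}\|_\infty<x^*_{(s)}$ required by \eqref{eq2.5-1} is precisely $\tau<\bar\tau$ in the sense of \eqref{eq2.7} (using that $x^*_{(s)}>0$ in this case). Having verified every line of \eqref{eq2.5-1}, the equivalence stated in the paragraph around \eqref{eq2.5-1} promotes $\bx^*$ to a $P$-stationary point, and uniqueness of $(\bnu^*,\bmu^*,\blambda^*)$ is inherited from Theorem~\ref{thm_first order BKKT}.

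The only genuinely delicate point, and the place where the $\tau$-threshold is actually used, is the case $\|\bx^*\|_0=s$: one must recognise that the sparse projection identity $\bx^*=\Pi_{\mathbb S}(\bx^*-\tau(\bg^*+\bnu^*))$ requires not just vanishing on $\Gamma_*$ but also a magnitude comparison on $\overline{\Gamma}_*$, and that invoking \cite[Lemma 2.2]{beck2013sparsity} (as the paper does just above the theorem) is what converts the set-valued normal cone condition into the strict inequality $\tau\|\bg^*_{\overline{\Gamma}_*}\|_\infty<x^*_{(s)}$; everything else is essentially bookkeeping on top of Theorem~\ref{thm_first order BKKT}.
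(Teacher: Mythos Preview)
Your proposal is correct and follows essentially the same route as the paper: invoke Theorem~\ref{thm_first order BKKT} to obtain the unique KKT multipliers, read off from the normal-cone formula \eqref{eq: normal cone} the relations $\bg^*_{\Gamma_*}+\bnu^*_{\Gamma_*}=0$ (and $\bg^*_{\overline{\Gamma}_*}=0$ when $\|\bx^*\|_0<s$), then use the definition of $\bar\tau$ to secure the strict inequality in \eqref{eq2.5-1} and conclude $P$-stationarity via the equivalence between \eqref{eq2.5} and \eqref{eq2.5-1}. The paper's proof is more terse and leaves the verification of the projection and Fischer--Burmeister rows of \eqref{eq2.5-1} implicit, but your additional detail there is harmless and the overall argument is the same.
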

\begin{proof}
	 Theorem \ref{thm_first order BKKT} states that there is a unique $(\bnu^*,\bmu^*,\blambda^*)$ such that $\bx^*$ is a KKT point of (\ref{eq: SCQP}), which by the first condition and $\bnu^*_{\overline{\Gamma}_*}=0$ in \eqref{eq: BKKT} indicates
	\begin{equation}
		\label{eq2.8}
		\begin{cases}
		\bg_{\Gamma_*}^*+\bnu^*_{\Gamma_*}=0, & \text{if}~\|\bx^*\|_0=s,\\
			\bg^*_{\Gamma_*}+\bnu^*_{\Gamma_*}=0,~~\bg^*_{\overline{\Gamma}_*}=0,& \text{if}~\|\bx^*\|_0<s,
		\end{cases}
	\end{equation}	
	The value of $\bar{\tau}$ means that for any $\tau \in \left( 0,\bar{\tau} \right)$, we have $\tau\left|g_i^*\right|< x^*_{(s)}$ for all $i \in\overline{\Gamma}_*$ when $\left\|\bx^*\right\|_0=s$. By \eqref{eq2.5-1},   we can conclude that $x^*$ is a $P$-stationary point of (\ref{eq: SCQP}) for any $\tau \in \left( 0,\bar{\tau} \right)$.
\end{proof}
The above theorem states that a local minimizer $\bx^*$ is a P-stationary point for any $\tau \in \left( 0,\overline{\tau} \right)$, where $\overline{\tau}$ can be arbitrary positive scalar when $\|\bx^*\|_0<s$ while relies on $\bx^*$ when $\|\bx^*\|_0=s$.  Therefore, it is unknown and difficult to estimate under the case of $\|\bx^*\|_0=s$ in general. However, in some simple cases, $\overline{\tau}$ could be replaced by its positive lower bound. For example, consider the case of $k=0$ and $m=0$, let $\ell$ satisfy $0<\ell \leq x^*_{(s)}$ and $u=\max_i\{|\ell_i|, |u_i|\}$ with $\ell_i$ and $u_i$ being the end points of $X_i$ (namely, $X_i=[\ell_i, u_i]$). Then 
\begin{equation*}
\begin{aligned} 
 \|\bg^*_{\overline{\Gamma}_*} \|_{\infty} & = \left\|(\Q_{0})_{\overline{\Gamma}_*\Gamma_*}\bx^*_{\Gamma_*}+(\bq_{0})_{\overline{\Gamma}_*}  \right\|_{\infty} \leq u\|\Q_{0}\|_1 +\|\bq_{0}\|_{\infty},
\end{aligned}\end{equation*}
where $\|\Q\|_1 $  is the 1-norm of matrix $\Q$, which results in the following lower bound of $\overline{\tau}$,
\begin{equation}\label{lower-bound-tau}
 \underline{\tau}:= \frac{\ell}{u\|\Q_{0}\|_1 +\|\bq_{0}\|_{\infty}}.
 \end{equation}
This $\underline{\tau}$ can be used in Theorem \ref{thm_first order necessary stationary} to replace $\overline{\tau}$.  In other words,  in the case of $k=0$ and $m=0$, if  lower positive bound $\ell$ and upper positive bound $u$ of the non-zero entries of $\bx^*$ can be estimated in advance, then  $\underline{\tau}$ in \eqref{lower-bound-tau} can be used as an explicit substitute for $\overline{\tau}$. It is worth noting that the authors in \cite{cui2013convex} have studied sparse portfolio selection problems with constraints:  $0\leq x_i\leq b$ and $x_i \geq a_i$ if $x_i\neq0$, where $a_i\in(0, 1)$. Hence,  $\ell$ and $u$ are available as $\ell=\min_i a_i$ and $u=\max\{b,1\}$. 

It is worth mentioning that unknown constant $\overline{\tau}$ serves only to establish the existence of an interval $\tau \in \left( 0,\overline{\tau} \right)$ for which the theoretical results hold. However, it is unnecessary to estimate $\overline{\tau}$ in practice, as it does not provide any guidance for choosing $\tau$ in numerical experiments.

\begin{theorem}[{\bf First-order sufficient condition}]\label{thm_first order sufficient}
	Suppose that $\Q_0,\Q_1\ldots,\Q_k$ are positive semi-definite.  Then a $P$-stationary point of (\ref{eq: SCQP}) is a local minimizer if $\left\| \bx^* \right\| _0=s$ and a global minimizer if $\left\| \bx^* \right\| _0<s$.
\end{theorem}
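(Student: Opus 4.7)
The plan is to leverage convexity of the Lagrangian $L(\cdot,\bmu^*,\blambda^*)$ (ensured by $\bmu^*\ge 0$ together with the PSD assumption on every $\Q_i$) and combine it with the first-order information already encoded in the P-stationarity system \eqref{eq2.5-1}, mimicking the classical KKT-sufficiency argument for convex programming. I will first recall that, as noted right after \eqref{eq2.5-1}, any P-stationary point is a KKT point and therefore satisfies all conditions in \eqref{eq: BKKT}; in particular the complementary slackness identities $\mu_i^* f_i(\bx^*)=0$ and $\langle\blambda^*,\A\bx^*-\bb\rangle=0$ give $L(\bx^*,\bmu^*,\blambda^*)=f_0(\bx^*)$.

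For the case $\|\bx^*\|_0<s$, the first branch of \eqref{eq2.5-1} gives $\bg^*_{\Gamma_*}+\bnu^*_{\Gamma_*}=0$, $\bg^*_{\overline{\Gamma}_*}=0$ and $\bnu^*_{\overline{\Gamma}_*}=0$, hence $\nabla_x L(\bx^*,\bmu^*,\blambda^*)+\bnu^*=0$. Because $\X$ is a product of intervals containing $0$, we have $\bnu^*_{\Gamma_*}\in N_{\X_*}(\bx^*_{\Gamma_*})$ and $0\in N_{X_i}(0)$ for $i\in\overline{\Gamma}_*$, so $\bnu^*\in N_{\X}(\bx^*)$. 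Convexity of $L(\cdot,\bmu^*,\blambda^*)$ together with this first-order condition yields $L(\bx,\bmu^*,\blambda^*)\ge L(\bx^*,\bmu^*,\blambda^*)$ for every $\bx\in\X$. For any $\bx$ feasible for \eqref{eq: SCQP}, $\mu_i^* f_i(\bx)\le 0$ and $\langle\blambda^*,\A\bx-\bb\rangle\le 0$, so $f_0(\bx)\ge L(\bx,\bmu^*,\blambda^*)\ge L(\bx^*,\bmu^*,\blambda^*)=f_0(\bx^*)$, which proves global minimality.

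For the case $\|\bx^*\|_0=s$, I will first reduce the sparsity constraint to a linear support restriction near $\bx^*$. Set $\delta:=x^*_{(s)}=\min_{i\in\Gamma_*}|x^*_i|>0$ and let $\mathbb{N}^*:=\{\bx:\|\bx-\bx^*\|_\infty<\delta\}$. Every $\bx\in\mathbb{N}^*\cap\mathbb{S}$ satisfies $x_i\ne 0$ for all $i\in\Gamma_*$, and since $|\Gamma_*|=s$ and $\|\bx\|_0\le s$ this forces $\supp(\bx)=\Gamma_*$, i.e.\ $\bx_{\overline{\Gamma}_*}=0$. On this neighborhood the problem reduces to minimizing $f_0$ in the variable $\bx_{\Gamma_*}\in\X_*$ subject to $f_i(\bx)\le 0$ and $\A\bx\le\bb$, which is a convex program since $(\Q_i)_{\Gamma_*\Gamma_*}$ inherits PSDness. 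The P-stationarity identities $\bg^*_{\Gamma_*}+\bnu^*_{\Gamma_*}=0$ with $\bnu^*_{\Gamma_*}\in N_{\X_*}(\bx^*_{\Gamma_*})$, plus the complementary slackness for $\bmu^*,\blambda^*$, are exactly the KKT conditions of this reduced convex problem; replaying the Lagrangian argument of the previous paragraph on the affine slice $\{\bx_{\overline{\Gamma}_*}=0\}$ gives $f_0(\bx)\ge f_0(\bx^*)$ for all feasible $\bx\in\mathbb{N}^*$.

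The only nontrivial step is the localization in the $\|\bx^*\|_0=s$ case: one must verify that the nonsmooth sparsity constraint is locally equivalent to fixing the support to $\Gamma_*$, which in turn turns \eqref{eq: SCQP} into an honest convex problem near $\bx^*$. Once this observation is in place, everything else is the textbook sufficiency of KKT for convex programs, and the box structure of $\X$ together with $0\in X_i$ makes the normal-cone bookkeeping entirely routine.
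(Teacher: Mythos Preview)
Your proposal is correct and follows essentially the same approach as the paper's own proof: both exploit convexity of $L(\cdot,\bmu^*,\blambda^*)$ together with the gradient/normal-cone information in \eqref{eq2.5-1} to get the chain $f_0(\bx)\ge L(\bx,\bmu^*,\blambda^*)\ge L(\bx^*,\bmu^*,\blambda^*)=f_0(\bx^*)$, and both handle the case $\|\bx^*\|_0=s$ by the same localization argument that forces $\supp(\bx)=\Gamma_*$ on a small neighborhood. The only cosmetic differences are that the paper writes out $\langle\bg^*,\bx-\bx^*\rangle\ge 0$ directly rather than phrasing it as ``$\bx^*$ minimizes $L$ over $\X$'', and uses a Euclidean rather than $\ell_\infty$ ball for the neighborhood.
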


\begin{proof} Let $\bx^*$ is a $P$-stationary point of (\ref{eq: SCQP}). Then $\bx^*$ is a KKT point and hence there is $(\bnu^*,\bmu^*,\blambda^*)$ such that \eqref{eq2.5-1} and \eqref{eq: BKKT} hold. Note that ${\bnu^*_{\Gamma_*}\in N_{{\X_*}}( \bx^{*}_{\Gamma_*} )}$, it follows $\langle \bnu^*_{\Gamma_*}, \bx_{\Gamma_*} -\bx^*_{\Gamma_*} \rangle\leq  0$ for any $ \bx\in \F  \cap \mathbb{S} \cap \X$. If  $||\bx^*||_0<s$, then $\bg^*_{\Gamma_*} = - \bnu^*_{\Gamma_*} $ and $\bg^*_{\overline{\Gamma}_*}=0$  by \eqref{eq2.5-1}, yielding that 
\begin{align}\label{g-x-x*}
\langle \bg^*, \bx  -\bx^* \rangle =  \langle \bg^*_{\Gamma_*}, \bx_{\Gamma_*} -\bx^*_{\Gamma_*} \rangle  =\langle -\bnu^*_{\Gamma_*}, \bx_{\Gamma_*} -\bx^*_{\Gamma_*} \rangle\geq  0,
\end{align}
Additionally, since $\left\{\Q_0,\Q_1\ldots,\Q_k\right\}$ are positive semi-definite, $L\left( \cdot,\bmu^*,\blambda^* \right)$ is convex, thereby 
		\begin{align}\label{f0x-f0x*}
		f_0(\bx) \geq L\left( \bx,\bmu ^*,\blambda ^* \right) &\geq  L\left( \bx^*,\bmu ^*,\blambda ^* \right) +\langle\bg^*, \bx-\bx^*\rangle \geq f_0(\bx^*),
		\end{align}
for any $\bx\in \F  \cap \mathbb{S} \cap \X$, which means that $\bx^*$ is a global minimizer of (\ref{eq: SCQP}).

If  $||\bx^*||_0=s$, then there is a sufficiently small neighborhood $\mathbb{N}^*$ of $\bx^*$  such that $\supp(\bx)={\Gamma_*}$ for any $\bx \in \mathbb{N}^* \cap (\F \cap \mathbb{S} \cap \X)$.  In fact, one can set the radius of the neighborhood $\mathbb{N}^*$ smaller than $r:={\min}_{i\in {\Gamma}_*}|x^*_i|$. For any $\bx \in \mathbb{N}^*\cap (\F \cap \mathbb{S} \cap \X)$, if there is $t\in\Gamma_*$ such that $t\notin\supp(\bx)$, then $r > \|\bx-\bx^*\|\geq |x^*_t|\geq r$, a contradiction, which implies ${\Gamma}_*\subseteq \supp(\bx)$. This together with $\bx \in \mathbb{S} $ and $|{\Gamma_*}|=s$ yields $\supp(\bx)={\Gamma_*}$.  This indicates condition \eqref{g-x-x*} is also true,   which by the convexity of $L\left( \cdot,\bmu^*,\blambda^* \right)$ implies that condition \eqref{f0x-f0x*} holds for any $ \bx \in \mathbb{N}^* \cap (\F \cap \mathbb{S} \cap \X)$. Hence,  $\bx^*$ is a local minimizer of (\ref{eq: SCQP}).
\end{proof} 
 \begin{figure}[!t]
	\centering
	\begin{tikzpicture}[
		node distance = 1.2cm and 2cm,
		arrow/.style = {-Stealth, thick},
		doublearrow/.style = {Stealth-Stealth, thick}
		]
		\node (pstat) {P-stationarity};
		\node[right=of pstat] (kkt) {KKT};
		\node[right=of kkt] (local) {Local minimizers};
		
		\node[below=2cm of kkt] (bkkt) {B-KKT};
		\node[right=1cm of bkkt] (ckkt) {C-KKT}; 
		\node[right=1cm of ckkt] (sstat) {S-stationarity};  
		\node[right=1cm of sstat] (mstat) {M-stationarity};  
		
		\draw[-Stealth, thick] ([yshift=-1.5pt]pstat.east) -- ([yshift=-1.5pt]kkt.west);
		\draw[-Stealth, thick] ([yshift=1.5pt]kkt.west) -- 
		node[above=0.5mm] {\scriptsize $\tau \in (0,\overline{\tau})$} ([yshift=1.5pt]pstat.east);
		
		\draw[-Stealth, thick] ([yshift=-1.5pt]kkt.east) -- 
		node[below=0.5mm] {\scriptsize $\Q_i \succeq 0$} ([yshift=-1.5pt]local.west);
		\draw[-Stealth, thick] ([yshift=1.5pt]local.west) -- 
		node[above=0.5mm] {\scriptsize Assumption~\ref{assum_RLICQ}} ([yshift=1.5pt]kkt.east);
		
		\draw[-Stealth, thick] ([xshift=-2.5pt]kkt.south) -- ([xshift=-2.5pt]bkkt.north);
		\draw[-Stealth, thick] ([xshift=2.5pt]bkkt.north) -- 
		node[right=0.5mm] {\scriptsize $\X=\mathbb{R}^n$} ([xshift=2.5pt]kkt.south);
		
		\draw[arrow] (bkkt) -- (ckkt);
		
		\draw[-Stealth, thick] ([yshift=-1.5pt]ckkt.east) -- ([yshift=-1.5pt]sstat.west);
		\draw[-Stealth, thick] ([yshift=1.5pt]sstat.west) -- ([yshift=1.5pt]ckkt.east);
		
		\draw[arrow] (sstat) -- (mstat);
		
	\end{tikzpicture}
	\caption{Relationships among different points for problem \eqref{eq: SCQP}.}
	\label{fig: relationship}
\end{figure}
Based on Theorems \ref{thm_first order BKKT}, \ref{thm_first order necessary stationary}, and  \ref{thm_first order sufficient}, we summarize the relationships among P-stationarity, KKT point, local minimizers of problem \eqref{eq: SCQP}, as shown in the first line in Figure \ref{fig: relationship}, where $\Q_i\succeq 0$ means $\Q_i$ is a symmetric positive semidefinite matrix. In the figure, we also present the relationships with four other type of optimality points. They are B-KKT and C-KKT points in \cite[Definition 3.1]{pan2017optimality}, and S-stationary and M-stationary points in \cite[Definition 4.1]{vcervinka2016constraint}. It should be noted that the distinction between B-KKT points and KKT points in this paper lies primarily in the range of the dual multiplier $\bnu^*$. A B-KKT point requires ${\bnu^*\in N_{\X}( \bx^{*} )}$, while a KKT point requires ${\bnu^*_{\Gamma_*}\in N_{\X_*}(\bx_{{\Gamma_*}}^{*} )}$ and ${\bnu^*_{\overline{\Gamma}_*}=0}$. Therefore, every KKT point is necessarily a B-KKT point. When ${\X = \mathbb{R}^n}$, the two concepts are equivalent for \eqref{eq: SCQP}. The remaining relationships shown in the second row of the figure are derived in \cite{zhao2022}. Overall, one can observe that a point being a P-stationary point represents the better condition than the others.

\subsection{Stationary equations}
Hereafter, we denote some notation for simplicity. Given $\bx \in \mathbb{S}$ and a constant $\tau>0$, let
\begin{align*}
\bu  &:= \bx - \tau \left({\nabla _x}L(\bx,\bmu,\blambda)+ \bnu\right),\\
\Y&:=(\bx,\bnu,\bmu,\blambda)\in    \mathbb{R}^n \times \mathbb{R}^{n}\times\mathbb{R}^k \times \mathbb{R}^m ,
\end{align*}
which allows us to define a useful set  by
	\begin{equation}
		\label{eq2.10}
		\mathbb{T}_{\tau}(\Y) := \Big\{  T \subseteq [n]:~ |T|=s,~|  u_i  | \geq | u_j |,\ \forall i \in T,\ \forall j \in \overline{T} \Big\},
	\end{equation}
where $u_i$ is the $i$th entry of $\bu$. One can observe that any $T\in \mathbb{T}_{\tau}(\Y)$ consists of $s$ indices of the first $s$ largest (in absolute value) entries of $\bu$. 
	 Finally, given $\Y$ and an index set $T$,  we define a system of equations as follows,
\begin{equation}
		\label{eq2.011}
		F\left( \Y;T \right) :=\left[ \begin{array}{c}
			\left( \nabla _x L\left( \bx,\bmu,\blambda \right) + \bnu\right) _T  \\
			\bx_{\overline{T}}\\
			\bx_{T}-\Pi _{{\X_{T}}}\left( \bx_{T}+\bnu_{T} \right)\\
			 \bnu_{\overline{T}}\\
			 {\varphi}\left( \bx,\bmu \right)\\
			{\psi}\left( \bx,\blambda \right)\\						
		\end{array} \right].
	\end{equation}
This equation enables the further exploration of condition \eqref{eq2.5}, as outlined below.
\begin{theorem}[{\bf Stationary Equations}]
	\label{thm3} Point  $\bx^*$ is a $P$-stationary point of (\ref{eq: SCQP}) with $\tau>0$ if and only if there is $(\bnu^*,\bmu^*,\blambda^*)  \in  \mathbb{R}^{n}\times\mathbb{R}^k \times \mathbb{R}^m $ such that 
	\begin{equation} \label{eq2.11}
		F\left( \Y^*;T\right) =0, ~~\forall~T \in \mathbb{T}_{\tau}(\Y^*).
	\end{equation}
	Furthermore, 
\begin{align}\label{T=J}
\mathbb{T}_{\tau}(\Y^*) \equiv \mathbb{J}_s({\bx^*}).
\end{align}
\end{theorem}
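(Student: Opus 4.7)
The plan is to translate directly between the equivalent reformulation of $P$-stationarity in (\ref{eq2.5-1}) and the system $F(\Y^*;T)=0$, proving both halves of the ``iff'' while simultaneously identifying the index set $\mathbb{T}_\tau(\Y^*)$. The natural organizing principle is to split on whether $\|\bx^*\|_0=s$ or $\|\bx^*\|_0<s$, because these two regimes determine how many index sets $T$ can possibly lie in $\mathbb{T}_\tau(\Y^*)$, and they govern whether the strict or weak bound on $\tau\|\bg^*_{\overline{\Gamma}_*}\|_\infty$ is active. Throughout, I would compute $\bu^*=\bx^*-\tau(\bg^*+\bnu^*)$ explicitly on $\Gamma_*$ and on $\overline{\Gamma}_*$, use the fact $0\in X_i$ so that $\Pi_{X_i}(0)=0$, and invoke the characterization of the FB function to reduce $\varphi,\psi$ to the complementarity rows of (\ref{eq: BKKT}).

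For $(\Rightarrow)$, assuming $\bx^*$ is $P$-stationary I would read (\ref{eq2.5-1}) off: on $\Gamma_*$ we have $u^*_i=x^*_i$ so $|u^*_i|\ge x^*_{(s)}$, while on $\overline{\Gamma}_*$ we have $u^*_j=-\tau g^*_j$ so $|u^*_j|\le\tau\|\bg^*_{\overline{\Gamma}_*}\|_\infty$. When $\|\bx^*\|_0=s$ the strict bound in (\ref{eq2.5-1}) gives $|u^*_i|>|u^*_j|$ on the respective index sets, forcing $\mathbb{T}_\tau(\Y^*)=\{\Gamma_*\}=\mathbb{J}_s(\bx^*)$; when $\|\bx^*\|_0<s$ the condition $\bg^*_{\overline{\Gamma}_*}=0$ in (\ref{eq2.5-1}) gives $\bu^*_{\overline{\Gamma}_*}=0$ and nonzero entries only on $\Gamma_*$, yielding $\mathbb{T}_\tau(\Y^*)=\{T:|T|=s,\,T\supseteq\Gamma_*\}=\mathbb{J}_s(\bx^*)$. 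For any such $T\supseteq\Gamma_*$, checking the six blocks of $F(\Y^*;T)$ is routine: the first two come from $(\bg^*+\bnu^*)_{\Gamma_*}=0$, $\bg^*_{\overline{\Gamma}_*}=0$ and $\bx^*_{\overline{\Gamma}_*}=0$; the third uses $\Pi_{\X_{T\setminus\Gamma_*}}(0)=0$ together with the projection identity on $\X_*$; the fourth uses $\bnu^*_{\overline{\Gamma}_*}=0$; the last two are the FB reformulation of the complementarity conditions.

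For $(\Leftarrow)$, I pick any $T_0\in\mathbb{T}_\tau(\Y^*)$ (nonempty by definition) and use $\bx^*_{\overline{T}_0}=0$ to conclude $\bx^*\in\mathbb{S}$ and $\Gamma_*\subseteq T_0$. If $\|\bx^*\|_0<s$, choose $i_0\in T_0\setminus\Gamma_*$; since $(\bg^*+\bnu^*)_{T_0}=0$ forces $u^*_i=x^*_i$ on $T_0$, one has $u^*_{i_0}=0$, which via the min--max inequality in the definition of $\mathbb{T}_\tau$ forces $\bu^*_{\overline{T}_0}=0$ and hence $\bu^*_{\overline{\Gamma}_*}=0$. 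This already yields $\mathbb{T}_\tau(\Y^*)=\mathbb{J}_s(\bx^*)$, and then for each $j\in\overline{\Gamma}_*$ I pick $T\in\mathbb{J}_s(\bx^*)$ avoiding $j$ to deduce $\nu^*_j=0$ and $g^*_j=0$, assembling the $\|\bx^*\|_0<s$ branch of (\ref{eq2.5-1}). If instead $\|\bx^*\|_0=s$, then $T_0=\Gamma_*$ and the remaining rows of $F$ give everything in (\ref{eq2.5-1}) except the strict inequality. Recovering strictness is the step I expect to be the main obstacle: the definition of $\mathbb{T}_\tau$ uses only $\ge$, so a tie is compatible with $T_0=\Gamma_*\in\mathbb{T}_\tau(\Y^*)$. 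I would argue by contradiction that if $\tau|g^*_{j_0}|=x^*_{(s)}=|x^*_{i_0}|$ for some $j_0\in\overline{\Gamma}_*$ and $i_0\in\Gamma_*$, then $\Gamma^{\rm alt}:=(\Gamma_*\setminus\{i_0\})\cup\{j_0\}$ also lies in $\mathbb{T}_\tau(\Y^*)$, so the hypothesis forces $F(\Y^*;\Gamma^{\rm alt})=0$; but its $\bx_{\overline{T}}$-block would require $x^*_{i_0}=0$, contradicting $i_0\in\Gamma_*$. Hence strictness holds, (\ref{eq2.5-1}) is satisfied, and the identity $\mathbb{T}_\tau(\Y^*)=\mathbb{J}_s(\bx^*)=\{\Gamma_*\}$ follows at once.
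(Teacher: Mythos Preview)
Your argument is correct. The chief difference from the paper is where the heavy lifting happens. The paper organizes the proof around two block equivalences---one matching the sparse-projection identity $\bx^*=\Pi_{\mathbb{S}}(\bx^*-\tau(\bg^*+\bnu^*))$ to the conditions $(\bg^*+\bnu^*)_T=0$, $\bx^*_{\overline T}=0$ for every $T\in\mathbb{T}_\tau(\Y^*)$, and one matching the $\Pi_{\X_*}$/$\bnu^*_{\overline{\Gamma}_*}$ rows on $\Gamma_*$ to those on $T$---and it disposes of the first (and of the identity \eqref{T=J}) by citing \cite[Lemma~4]{NHTP} and \cite[Theorem~3]{zhao2022}; only the second block is argued directly, via the same $\|\bx^*\|_0=s$ versus $\|\bx^*\|_0<s$ split you use. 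You instead compute $\bu^*$ explicitly on $\Gamma_*$ and $\overline{\Gamma}_*$ and derive everything, including \eqref{T=J}, from scratch. The payoff is visibility: your swap construction $\Gamma^{\rm alt}=(\Gamma_*\setminus\{i_0\})\cup\{j_0\}$ is exactly the mechanism that forces the \emph{strict} bound $\tau\|\bg^*_{\overline{\Gamma}_*}\|_\infty<x^*_{(s)}$ in the $(\Leftarrow)$ direction when $\|\bx^*\|_0=s$, a point the paper leaves buried inside the cited lemma and which explains why the ``for all $T$'' quantifier in \eqref{eq2.11} is essential. The paper's route is shorter by outsourcing; yours is self-contained and more transparent about where the quantifier over $\mathbb{T}_\tau(\Y^*)$ is actually used.
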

\begin{proof}   By comparing \eqref{eq2.5-1} and \eqref{eq2.11}, we need to show that 
\begin{align}\label{equ-F-F'-1}
\bx^*=\Pi_{\mathbb{S}}\left(\bx^* - \tau (\bg^*+ \bnu^*)\right)~~\Longleftrightarrow~~ \left(\bg^*+ \bnu^*\right)_T=0,~ \bx^*_{\overline{T}}=0,~ \forall~T \in \mathbb{T}_{\tau}(\Y^*), 
\end{align}
and  the following relation,
\begin{align} 
	&\left[ \begin{array}{c}
			\bx^*_{\Gamma_*}-\Pi _{{\X_*}}\left( \bx_{\Gamma_*}^*+\bnu_{\Gamma_*}^* \right) \nonumber \\
				 \bnu^*_{\overline{\Gamma}_*}
		\end{array} \right]=0,~~\forall~T \in \mathbb{T}_{\tau}(\Y^*) \\
\label{equ-F-F'-2}	\Longleftrightarrow&
			\left[ ~\begin{array}{c}
			\bx^*_{T}-\Pi _{{\X_{T}}}\left( \bx_{T}^*+\bnu_{T}^* \right)\\
			 \bnu^*_{\overline{T}}
		\end{array}~~ \right]=0,~~\forall~T \in \mathbb{T}_{\tau}(\Y^*).
\end{align}
Equivalence \eqref{equ-F-F'-1} follows from \cite[Lemma 4]{NHTP} immediately. Thus, condition \eqref{T=J} follows from \cite[Theorem 3]{zhao2022} and \eqref{equ-F-F'-1}.  Based on this relation and $\bx^*_{\overline{T}}=0$, we can conclude that 
\begin{align} \label{Gamma-in-T}
\Gamma_*\subseteq T,\qquad~\forall~T \in \mathbb{T}_{\tau}(\Y^*).  \end{align}
We prove \eqref{equ-F-F'-2} by considering two cases. When $\|\bx^*\|_0=s$, it has $T=\Gamma_*$ due to \eqref{Gamma-in-T} and $|T|=s$, which immediately shows \eqref{equ-F-F'-2}.  When $\|\bx^*\|_0<s$, it follows from \eqref{Gamma-in-T}  that  $\overline{\Gamma}_*\supseteq  \overline{T}$  for any $T\in \mathbb{T}_{\tau}(\Y^*)$, which means that $ \bnu^*_{\overline{\Gamma}_*} =0 $ suffices to  $\bnu^*_{\overline{T}}=0$ and $$\bx^*_{T\setminus\Gamma_*}-\Pi _{{\X_{T\setminus\Gamma_*}}}\left( \bx_{T\setminus\Gamma_*}^*+\bnu_{T\setminus\Gamma_*}^* \right)=0-\Pi _{{\X_{T\setminus\Gamma_*}}}\left(0+0 \right)=0,$$
showing inclusion $`\Longrightarrow'$. From \eqref{equ-F-F'-2} and \eqref{T=J}, ${\bnu^*_{\overline{T}} =0}$ for any $T \in \mathbb{T}_{\tau}(\Y^*)=\mathbb{J}_s(\bx^*),$ which contributes to  
$0=\cup_{T\in \mathbb{J}_s(\bx^*) }\bnu^*_{\overline{T}} =\bnu^*_{\overline{\Gamma}_*}$. Consequently, $`\Longleftarrow'$ follows due to $\Gamma_*\subseteq T$.
\end{proof}

The motivation for deriving the above theorem stems from numerical algorithm design. Although an unknown index set   $T$ is involved, the core of condition \eqref{eq2.11} is a system of equations, which makes a Newton-type method feasible.
\subsection{Second-order optimality conditions}
To end this section, we establish second-order optimality conditions for (\ref{eq: SCQP}). For convenience, given $\Y^*:=(\bx^*,\bnu^*,\bmu^*,\blambda^*),$ we define the following index sets,
\begin{equation}
	\label{index set 3}
	\begin{aligned}
		 &\eta^1:=\left\{i\in[k] : {\mu}^*_i=0, f_i({\bx}^* )<0\right\}, \qquad\eta^2:=\left\{i\in[m]: {\lambda}^*_i=0, \langle\ba_{i},{\bx}^*\rangle<b_i \right\},\\
		 &\theta^1:=\left\{i\in[k] : {\mu}^*_i=0, f_i({\bx}^* )=0\right\}, \qquad\theta^2:=\left\{i\in[m]: {\lambda}^*_i=0, \langle\ba_{i},{\bx}^*\rangle=b_i\right\},\\
		 &\beta^1:=\left\{i\in[k] : {\mu}^*_i>0, f_i({\bx}^* )=0\right\},\qquad \beta^2:=\left\{i\in[m] : {\lambda}^*_i>0, \langle\ba_{i},{\bx}^*\rangle=b_i\right\}. 
	\end{aligned}
\end{equation}
In addition, for a given index set $T\in \mathbb{T}_\tau(\Y^*)$, let
\begin{equation}
	\label{index set 3-1}
	\begin{aligned}
		 &\eta ^3(T):=\left\{ i\in T: {\nu}^*_i=0,{x}^*_{i} \in {\rm int} (X_{i}) \right\},\qquad&&\eta ^3:= \eta ^3(\Gamma_*),\\
		 &\theta ^3(T):=\left\{ i\in T: {\nu}^*_i=0, {x}^*_{i}\in {\rm bd} (X_{i}) \right\},\qquad&&\theta ^3:= \theta ^3(\Gamma_*),\\
		 &\beta^3(T):=\left\{ i\in T : {\nu}^*_i\ne 0,{x}^*_{i} \in {\rm bd} (X_{i}) \right\}, \qquad&&\beta^3:= \beta ^3(\Gamma_*).\\
	\end{aligned}
\end{equation} 
For a general constrained optimization problem,
	\begin{equation*}
		\label{eq: GCOP}
			\underset{\bx\in \mathbb{R} ^n}{\min} ~f(\bx), ~~\mathrm{s.t.} ~F(\bx)\in D,~\bx\in X,
	\end{equation*}
	where $f: \mathbb{R}^n\rightarrow\mathbb{R}$ and $F: \mathbb{R}^n\rightarrow\mathbb{R}^m$ are twice continuously differentiable, $X \subset \mathbb{R}^n$ and $D \subset \mathbb{R}^m$ are polyhedral, the critical cone \cite[Example 13.25]{rockafellar2009variational} at a feasible point $\bx^*$ is defined by
	\begin{equation*}
	\label{eq: critical cone general}	
	K(\bx^*) = \Big\{ \bw \in T_X(\bx^*) ~\Big|~\nabla F(\bx^*)\bw \in T_D(F(\bx^*)), ~ w_i \nabla_i f(\bx^*) =0,~ i\in[n] \Big\}.
	\end{equation*}
	Based on the intermediate problem \eqref{eq2.1} with $T=\Gamma_*$, namely,
	\begin{equation} \label{eq: sub SQCP}
		\begin{aligned}
			\min ~  f_0(\bx), ~~ {\rm s.t.}  ~   \bx\in\F,~ \bx_{\overline{\Gamma}_*} = 0,~\bx_{\Gamma_*}\in \X_*,
		\end{aligned}
	\end{equation} 
one can derive  the critical cone  for (\ref{eq: SCQP}) as follows,
\begin{equation}
	\label{eq: critical cone}
\mathbb{C}\left( \Y^* \right) :=	\left\{
\bd\in \mathbb{R} ^n:\begin{array}{rr}	
	\langle \nabla f_i({\bx}^* ), \bd\rangle\leq  0,~i\in \theta^1,& ~~\langle  \nabla f_i({\bx}^* ), \bd\rangle=0,~i\in \beta^1
        \\
        \langle \ba_i, \bd \rangle \leq 0,~i\in \theta^2,& ~~\langle \ba_i, \bd \rangle=0,~i\in \beta^2
	\\
	d_{i} \in T_{X_{i}}( {x}^*_{i}),~ i\in \theta^3,& ~~ d_{i} =0,~ i\in \beta^3
	\end{array} \right\}.
\end{equation}

\begin{theorem}	
	[{\bf Second-order necessary condition}]
	\label{thm_second order necessary}
	Let $\bx^*$ be a local minimizer of (\ref{eq: SCQP}) and Assumption \ref{assum_RLICQ} hold  at $\bx^*$. Then there is unique $(\bnu^*,\bmu^*,\blambda^*)  \in  \mathbb{R}^{n}\times\mathbb{R}^k \times \mathbb{R}^m $ such that 
	$$
	\langle \bd, \H^*\bd \rangle \geq  0,~~ \forall~\bd\in \mathbb{C}\left( \Y^* \right) \cap \mathbb{R}_{\Gamma_*}^n.
	$$
\end{theorem}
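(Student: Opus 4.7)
The plan is to reduce the second-order analysis of \eqref{eq: SCQP} to that of the standard NLP \eqref{eq: sub SQCP} obtained by fixing the support to $\Gamma_*$, and then invoke the classical second-order necessary conditions for NLP under LICQ. Uniqueness of $(\bnu^*,\bmu^*,\blambda^*)$ is already supplied by Theorem \ref{thm_first order BKKT}, so the only task is to establish the curvature inequality.

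First, I would recall from the proof of Theorem \ref{thm_first order BKKT} that, with $T=\Gamma_*\in\mathbb{J}_s(\bx^*)$, the local minimizer $\bx^*$ of \eqref{eq: SCQP} is also a local minimizer of the reduced problem \eqref{eq: sub SQCP}, and that Assumption \ref{assum_RLICQ} is equivalent to the (classical) LICQ holding for \eqref{eq: sub SQCP} at $\bx^*$. Hence the standard second-order necessary conditions for NLP apply to \eqref{eq: sub SQCP}: the Hessian of its Lagrangian (evaluated at the unique KKT multipliers) is positive semidefinite on the critical cone of \eqref{eq: sub SQCP} at $\bx^*$.

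Second, I would identify this Hessian and critical cone with the objects in the statement. The Lagrangian of \eqref{eq: sub SQCP} differs from $L(\bx,\bmu,\blambda)$ only by terms that are linear in $\bx$ (the multipliers of the linear equality $\bx_{\overline\Gamma_*}=0$ and of the box set $\X_*$), so its Hessian in $\bx$ coincides with $\H^*$. Using the description of critical cones for polyhedral constraint systems (as in \cite[Example 13.25]{rockafellar2009variational}) together with the partition $\mathcal{A}_1(\bx^*)=\theta^1\cup\beta^1$, $\mathcal{A}_2(\bx^*)=\theta^2\cup\beta^2$, and the analogous partition from \eqref{index set 3-1} for the box constraint $\bx_{\Gamma_*}\in\X_*$, the critical cone of \eqref{eq: sub SQCP} at $\bx^*$ consists of directions $\bd$ with $\bd_{\overline{\Gamma}_*}=0$ and satisfying exactly the inequalities/equalities that define $\mathbb{C}(\Y^*)$ in \eqref{eq: critical cone}. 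Since $\bd_{\overline{\Gamma}_*}=0$ is precisely $\bd\in \mathbb{R}^n_{\Gamma_*}$, the critical cone of the reduced problem is $\mathbb{C}(\Y^*)\cap \mathbb{R}^n_{\Gamma_*}$.

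Combining these two steps yields $\langle \bd,\H^*\bd\rangle\ge 0$ for every $\bd\in\mathbb{C}(\Y^*)\cap\mathbb{R}^n_{\Gamma_*}$, which is the claim. The main obstacle I foresee is of a bookkeeping nature rather than a conceptual one: carefully matching the critical cone of \eqref{eq: sub SQCP} (which carries multipliers for the auxiliary equality $\bx_{\overline\Gamma_*}=0$ and for the box set $\X_*$, each introducing its own active/inactive partition) with the cone $\mathbb{C}(\Y^*)$ defined in \eqref{eq: critical cone} via the index sets \eqref{index set 3}-\eqref{index set 3-1}, and verifying that the linear auxiliary multipliers contribute nothing to the Hessian. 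Everything else is a direct appeal to classical NLP theory enabled by the reduction argument already used for the first-order theorem.
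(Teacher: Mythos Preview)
Your proposal is correct and follows essentially the same route as the paper: reduce to the standard NLP \eqref{eq: sub SQCP} with $T=\Gamma_*$, observe that Assumption \ref{assum_RLICQ} yields LICQ there, and invoke \cite[Example 13.25]{rockafellar2009variational} to obtain the second-order necessary condition on the critical cone, which is exactly $\mathbb{C}(\Y^*)\cap\mathbb{R}^n_{\Gamma_*}$. The paper's proof is a two-line version of what you wrote; your additional bookkeeping remarks about the Hessian and the critical-cone identification are accurate and simply make explicit what the paper leaves implicit.
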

\begin{proof} As ${\bx^*}$ is a local minimizer of (\ref{eq: SCQP}), it is also a local minimizer of \eqref{eq: sub SQCP}.
	 Then the conclusion follows from Assumption \ref{assum_RLICQ}  and  \cite[Example 13.25]{rockafellar2009variational} immediately. \end{proof}

\begin{theorem}
	[{\bf Second-order sufficient condition}]
	\label{thm_second order sufficient}
	Let $\bx^*$ be a P-stationary point of (\ref{eq: SCQP}), namely, there is $(\bnu^*,\bmu^*,\blambda^*)  \in  \mathbb{R}^{n}\times\mathbb{R}^k \times \mathbb{R}^m $ satisfying (\ref{eq2.5}). Suppose that
	\begin{equation}
		\label{eq2.14}
		 \langle \bd, \H^* \bd \rangle > 0,~~\forall~\bd(\neq 0)\in 
		 \begin{cases}
		 \mathbb{C}\left( \Y^*\right)\cap \mathbb{R}_{\Gamma_*}^n,& \text{if}~
		 \|\bx^*\|_0=s,\\ 
		 \mathbb{C}\left( \Y^*\right),& \text{if}~
		 \|\bx^*\|_0<s.\\
		 \end{cases} 
	\end{equation}
	Then $\bx^*$ is a strictly local minimizer of (\ref{eq: SCQP}).
\end{theorem}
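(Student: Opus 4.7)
My plan is a proof by contradiction, following the classical second-order sufficiency scheme adapted to the sparse setting. Suppose $\bx^*$ is not a strict local minimizer of (\ref{eq: SCQP}); then there is a sequence $\bx^\ell \in \F \cap \mathbb{S} \cap \X$ with $\bx^\ell \to \bx^*$, $\bx^\ell \neq \bx^*$, and $f_0(\bx^\ell) \le f_0(\bx^*)$. Normalizing $\bd^\ell := (\bx^\ell - \bx^*)/\|\bx^\ell - \bx^*\|$ and extracting a subsequence, I may assume $\bd^\ell \to \bd$ with $\|\bd\|=1$. The goal is to show $\bd$ belongs to the cone on the right-hand side of (\ref{eq2.14}) while also satisfying $\langle \bd, \H^* \bd\rangle \le 0$, producing the desired contradiction.

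First I would deal with the sparsity issue. If $\|\bx^*\|_0 = s$, then for $\ell$ large any index $i \in \Gamma_*$ missing from $\supp(\bx^\ell)$ would force $\|\bx^\ell - \bx^*\| \ge |x^*_i| \ge \min_{j\in\Gamma_*}|x^*_j|$; combined with $\|\bx^\ell\|_0 \le s = |\Gamma_*|$, this forces $\supp(\bx^\ell) = \Gamma_*$, exactly the device used in the proof of Theorem \ref{thm_first order sufficient}. Hence $(\bx^\ell - \bx^*)_{\overline{\Gamma}_*} = 0$, and $\bd \in \mathbb{R}^n_{\Gamma_*}$ automatically. When $\|\bx^*\|_0 < s$, no such restriction is needed.

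The core step is to sandwich the quadratic Taylor expansion of the Lagrangian. Since $L(\cdot, \bmu^*, \blambda^*)$ is quadratic,
\begin{equation*}
L(\bx^\ell, \bmu^*, \blambda^*) - L(\bx^*, \bmu^*, \blambda^*) = \langle \bg^*, \bx^\ell - \bx^*\rangle + \tfrac{1}{2}\langle \bx^\ell - \bx^*, \H^*(\bx^\ell - \bx^*)\rangle.
\end{equation*}
From the KKT relations (\ref{eq: BKKT}) and the feasibility of $\bx^\ell$, the same difference equals $[f_0(\bx^\ell)-f_0(\bx^*)] + \sum_i \mu^*_i f_i(\bx^\ell) + \langle \blambda^*, \A\bx^\ell - \bb\rangle$, which is a sum of three nonpositive terms. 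On the other hand, $-\bg^* - \bnu^* \in \widehat{N}_{\mathbb{S}}(\bx^*)$, together with $\bnu^*_{\Gamma_*} \in N_{\X_*}(\bx^*_{\Gamma_*})$, $\bnu^*_{\overline{\Gamma}_*}=0$, and the support information above, gives $\langle \bg^*, \bx^\ell - \bx^*\rangle = -\langle \bnu^*_{\Gamma_*}, \bx^\ell_{\Gamma_*} - \bx^*_{\Gamma_*}\rangle \ge 0$ in both cases. Combining the two expressions forces $\tfrac{1}{2}\langle \bx^\ell - \bx^*, \H^*(\bx^\ell - \bx^*)\rangle \le 0$, each of the three Lagrangian summands to be $O(\|\bx^\ell-\bx^*\|^2)$, and moreover $-\langle \bnu^*, \bx^\ell-\bx^*\rangle = O(\|\bx^\ell-\bx^*\|^2)$.

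Dividing by $\|\bx^\ell-\bx^*\|$ and passing to the limit converts the Taylor inequalities $\langle \nabla f_i(\bx^*), \bd\rangle \le 0$, $\langle \ba_i, \bd\rangle \le 0$, and the boundary-induced sign constraints on $\bd_i$ into the defining conditions of $\mathbb{C}(\Y^*)$: the $O(\|\cdot\|^2)$ bound on $-\mu^*_i f_i(\bx^\ell)$ yields $\langle \nabla f_i(\bx^*),\bd\rangle = 0$ for $i \in \beta^1$, similarly for $\beta^2$, and the $O(\|\cdot\|^2)$ bound on $-\langle \bnu^*, \bx^\ell - \bx^*\rangle$—a sum of like-signed terms, since each $\nu^*_i(x^\ell_i - x^*_i) \le 0$ by $\nu^*_i \in N_{X_i}(x^*_i)$ and $x^\ell_i \in X_i$—forces $d_i = 0$ for $i \in \beta^3$. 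Dividing instead by $\|\bx^\ell-\bx^*\|^2$ and taking the limit gives $\langle \bd, \H^*\bd\rangle \le 0$ for a nonzero $\bd$ in the relevant cone, contradicting (\ref{eq2.14}). I expect the main obstacle to be the final bookkeeping step: verifying term-by-term the sign pattern needed to deduce $d_i = 0$ on $\beta^3$, and carefully separating the two cases $\|\bx^*\|_0 = s$ and $\|\bx^*\|_0 < s$ when justifying the positivity of $\langle \bg^*, \bx^\ell - \bx^*\rangle$.
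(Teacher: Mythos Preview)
Your proposal is correct and follows essentially the same architecture as the paper's proof: contradiction via a feasible sequence $\bx^\ell\to\bx^*$ with $f_0(\bx^\ell)\le f_0(\bx^*)$, extraction of a unit limiting direction $\bd$, verification that $\bd$ lies in the relevant critical cone, and the Lagrangian expansion yielding $\langle\bd,\H^*\bd\rangle\le 0$.

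The one genuine difference is in how you establish the equalities on $\beta^1,\beta^2,\beta^3$. The paper first derives $\langle\nabla f_0(\bx^*),\bd\rangle\le 0$ and the weak inequalities on $\theta^t\cup\beta^t$, then uses the first-order identity $\langle\bg^*+\bnu^*,\bd\rangle=0$ algebraically: expanding it as a sum of nonpositive terms that must vanish, any strict inequality on a $\beta$-index would force a contradiction. You instead sandwich the Lagrangian difference between $\langle\bg^*,\bx^\ell-\bx^*\rangle\ge 0$ and the quadratic remainder to obtain $O(\|\bx^\ell-\bx^*\|^2)$ bounds on each nonpositive summand and on $-\langle\bnu^*,\bx^\ell-\bx^*\rangle$, then divide by $\|\bx^\ell-\bx^*\|$ and pass to the limit. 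Both routes are standard; yours is the more quantitative ``squeeze'' argument, while the paper's is a cleaner algebraic cancellation that avoids tracking the $O(\|\cdot\|^2)$ constants. Your flagged obstacle (the term-by-term sign check for $\beta^3$) is exactly right and goes through because each $-\nu^*_i(x^\ell_i-x^*_i)\ge 0$ individually.
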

\begin{proof}
    We argue by contradiction. Suppose that $\bx^*$ is not a strictly local minimizer of \eqref{eq: SCQP}. Then there is a sequence $\left\{\bx^{\ell} \right\} \subset(\F  \cap \mathbb{S} \cap \X) \backslash\left\{\bx^*\right\}$ satisfying $\bx^{\ell}  \rightarrow \bx^*$ and
\begin{equation}\label{fxl-smaller-fx*}
		f_0( \bx^{\ell} ) \leq  f_0\left( \bx^* \right),~~ \ell = 1,2,3,\ldots
\end{equation}
Let $\bd^{\ell}:= (\bx^{\ell}  - \bx^*)/{\|\bx^{\ell}  - \bx^*\|}$.  Then $\|\bd^{\ell} \| = 1$ and thus there is a convergent subsequence of $\{\bd^{\ell} \}$ whose limit point $\bd$ satisfies $\|\bd\|=1$.  Without loss of generality, we assume that sequence $\{\bd^{\ell} \}$ itself converges to $\bd$. It is obvious that $\bd_{\overline{\Gamma}_*}=0$ when $ \|\bx^*\|_0=s$ since $\supp(\bx^\ell)=\Gamma_*$ when $\bx^\ell$ is sufficiently close to $\bx^*$. By \eqref{fxl-smaller-fx*}, 
\begin{equation*} 
0 \geq  f_0(\bx^{\ell} )-f_0(\bx^*) =  \langle\nabla f_0\left(x^*\right), \bx^{\ell} -\bx^*\rangle + o(\|\bx^{\ell} -\bx^*\|).
\end{equation*}
Dividing the both sides of the above inequality by $\|\bx^{\ell} -\bx^*\|$ and letting $\ell\rightarrow \infty$, we have
\begin{equation}\label{grad0-d-less-0}
\langle\nabla f_0\left(\bx^*\right),\bd\rangle\leq 0.
\end{equation}
We note that a P-stationary point is also a KKT point, so ${\bnu ^*_{\Gamma_*}\in N_{{\X_*}}( \bx_{\Gamma_*}^{*})}$ by \eqref{eq: BKKT}, leading to $\langle \bnu^*_{\Gamma_*}, \bx^{\ell}_{\Gamma_*} -\bx^*_{\Gamma_*} \rangle \leq  0$ and hence $\langle \bnu^*_{\Gamma_*}, \bd_{\Gamma_*}  \rangle=\lim_{\ell\to \infty}\langle \bnu^*_{\Gamma_*}, \bd^{\ell}_{\Gamma_*} \rangle \leq  0$. This indicates ${\bd_{\Gamma_*} \in T_{\X_*}\left( \bx^{*}_{\Gamma_*} \right)}$ and thus $\langle\bnu^*_{\Gamma_*}, \bd_{\Gamma_*} \rangle\leq 0$ . By the definitions of $(\theta ^1,\beta^1,\theta ^2,\beta^2)$ and  $\left\{\bx^{\ell} \right\} \subset \F  \cap \mathbb{S} \cap \X$, it follows  
\begin{align*}
f_i(\bx^{\ell}) - f_i(\bx^*) = f_i(\bx^{\ell})\leq  0, ~~&\forall~i\in \theta ^1\cup \beta^1,\\
\langle \ba_i,\bx^{\ell}  - \bx^* \rangle =\langle \ba_i,\bx^{\ell}\rangle-b_i-(\langle \ba_i,\bx^* \rangle-b_i) \leq  0, ~~&\forall~i\in \theta ^2\cup \beta^2.
\end{align*}
Based on these, the similar reasoning to show \eqref{grad0-d-less-0} enables us to derive that 
\begin{equation}
    \begin{cases}
    \label{eq: d in C1}
	\left< \nabla f_i\left( \bx^* \right) ,\bd \right> \leq  0, &i\in \theta ^1\cup \beta^1,\\
	\langle \ba_i,\bd \rangle \leq  0, &i\in \theta ^2\cup \beta^2,\\
	d_{i} \in T_{X_{i}}\left( x_{i}^{*} \right) ,& i\in\theta ^3\cup \beta^3.
\end{cases}
\end{equation}
Next,  we further assert that
\begin{equation}
    \begin{cases}
    \label{eq: d in C2}
	\left< \nabla f_i\left( \bx^* \right) ,\bd \right> =  0, &i\in \beta^1,\\
	\langle \ba_i,\bd \rangle =  0, &i\in \beta^2,\\
	d_{i} =0 ,& i\in \beta^3.
\end{cases}
\end{equation}
Suppose there is an $i_0 \in \beta^1$ such that $\left< \nabla f_{i_0}\left( \bx^* \right) ,\bd \right> < 0$. Then we have 
\begin{align*}
0&=\langle \nabla _xL\left( \bx^*,\bmu ^*,\blambda ^* \right) + \bnu^*,\bd\rangle\\[1ex]
&=\left\langle \nabla f_0\left(\bx^*\right), \bd\right \rangle+\sum_{i \in \beta^1} \mu_i^* \left\langle \nabla f_i\left(\bx^*\right), \bd\right \rangle + \sum_{i \in \beta^2} \lambda_i^* \left\langle\ba_i, \bd\right \rangle+\sum_{i \in \beta^3}\nu_i^*d_{i}\\
&\leq \left\langle \nabla f_0\left(\bx^*\right), \bd\right \rangle+ \mu_{i_0}^* \left\langle \nabla f_{i_0}\left(\bx^*\right), \bd\right \rangle\\[1ex]
&< \left\langle \nabla f_0\left(\bx^*\right), \bd\right \rangle\leq0,
\end{align*}
where the first equality is from \eqref{eq2.5-1} by the P-stationarity of $\bx^*$ and $\bd_{\overline{\Gamma}_*}=0$ if $\|\bx^*\|_0=s$, the second equality and the first inequality are from \eqref{index set 3}, and the last inequality is from \eqref{grad0-d-less-0}. This contradiction shows the first equation in \eqref{eq: d in C2}. Then the same reasoning enables the last two equations in \eqref{eq: d in C2}. Combining \eqref{eq: d in C1}, \eqref{eq: d in C2}, and $\bd_{\overline{\Gamma}_*}=0$ when $ \|\bx^*\|_0=s$, we conclude that 
\begin{align}\label{d-in-C}
\bd(\neq0)\in\begin{cases}
		 \mathbb{C}\left( \Y^*\right)\cap \mathbb{R}_{\Gamma_*}^n,& \text{if}~
		 \|\bx^*\|_0=s,\\ 
		 \mathbb{C}\left( \Y^*\right),& \text{if}~
		 \|\bx^*\|_0<s.\\
		 \end{cases} 
\end{align}
 Additionally, it follows from \eqref{index set 3} and \eqref{fxl-smaller-fx*} that
\begin{align}\label{Ll-less-L*}
L(\bx^{\ell} , \bmu^*, \blambda^*)=f_0(\bx^{\ell} )+\sum_{i \in \beta^1} \mu_i^* f_i(\bx^{\ell} )+\sum_{i \in \beta^2} \lambda_i^* (\langle\ba_i, \bx^*\rangle - b_i) \leq  f_0\left(\bx^*\right)=L\left(\bx^*, \bmu^*, \blambda^*\right).
\end{align}
We note that $\supp(\bx^{\ell}) = \Gamma_*$ when $\|\bx^*\|_0=s$ and $\bg^*_{\overline{\Gamma}_*}=0$ by \eqref{eq2.5-1} when $\|\bx^*\|_0<s$. Then the similar reasoning to show \eqref{g-x-x*} can show  $\langle \bg^*, \bx^{\ell} -\bx^* \rangle \geq  0$.
 This together with \eqref{Ll-less-L*} result  in
            \begin{equation*}
\begin{aligned}
0   &\geq  2L(\bx^{\ell} , \bmu^*, \blambda^*)- 2L(\bx^*, \bmu^*, \blambda^*)  \\[1ex]
 &=  \langle \bx^{\ell} -\bx^*, \H^*(\bx^{\ell} -\bx^*)  \rangle+ 2\langle  \bg^*, \bx^{\ell} -\bx^*\rangle  \\[1ex]
&\geq   \langle \bx^{\ell} -\bx^*, \H^*(\bx^{\ell} -\bx^*).
\end{aligned}
\end{equation*}
Dividing the both sides of the above inequality by $\|\bx^{\ell} -\bx^*\|^2$ and letting $\ell\rightarrow \infty$, we have
$$
	 \langle \bd, \H^* \bd \rangle\leq  0, 
$$
for any $\bd$ satisfying \eqref{d-in-C}, which contradicts to \eqref{eq2.14}. The proof is completed.
\end{proof}

\section{Nonsingularity of Generalized Jacobian }
\label{Generalized Jacobian non-singularity}
In this section, we aim to analyze the nonsingularity of the generalized Jacobian matrix $\partial F$ of stationary equations (\ref{eq2.11}).
	Given ${\Y^*=(\bx^*,\bnu^*,\bmu^*,\blambda^*)\in \mathbb{R} ^{p }}$ with $p:=2n+k+m$, we select one index set ${T \in \mathbb{T}_{\tau}\left(\Y^*\right)}$. One can verify that $F\left( \cdot ;T \right)$ is strongly semismooth at $\Y^*$.  In fact, the first, second, and fourth rows of \eqref{eq2.011} are twice continuously differentiable and thus strongly semismooth, while the strongly semismoothness of the last two rows follows from the strongly semismoothness of the Fischer–Burmeister function \cite[Lemma 1] {sun1999ncp}. The projection operator in the third row of \eqref{eq2.011} is a piecewise affine operator, and according to \cite[Proposition 7.4.7] {facchinei2003finite}, it is also strongly semismooth.  Any generalized Jacobian matrix $\W \in \partial F\left( \Y^*; T\right) $ takes the form of
\begin{equation}
	\label{eq3.1}
	\W:=\left[ \begin{matrix}
		\H^* _{TT}&	\H^* _{T\overline{T}}&		\B^*_{T:}&		  \A_{:T}^{\top}  &		\I  & 0  \\
	0& \I &		0&		0&		0&0  \\
	 \I-\C^* &0 &		0&		0&		-\C^* & 0 \\		
		0& 0&		0&		0&		0&  \I \\
		\U_1^*(\B^*_{T:})^{\top}&\U_1^*(\B^*_{\overline{T}:})^{\top}&		\V_1^*&		0&		0& 0  \\
		\U_2^*\A_{:T}&	\U_2^*\A_{:\overline{T}}&	0&		\V_2^*&		0& 0 		\\		
		
	\end{matrix} \right] \in \mathbb{R} ^{p \times p}.
\end{equation} 
The sub-matrices in \eqref{eq3.1} are defined by 
\begin{equation}
\label{def-DCUV}
\begin{aligned}
& \B^* :=\left[ \begin{matrix}
	\nabla f_1(\bx^*)&		\cdots&		\nabla f_k(\bx^*)\\
\end{matrix} \right]\in \mathbb{R} ^{n \times k},\\   
& \C^* := \mathrm{Diag}\left( \mathbf{c}\right) \in \partial \Big(\Pi _{{\X_T}}\left( \bx^*_{T}+ \bnu^*_{T}\right) \Big)\in \mathbb{R} ^{s \times s},\\
&\U_i^*:=\mathrm{Diag}\left( \mathbf{u}^{t}\right)\in \mathbb{R} ^{k \times k},\\[1ex]
& \V_i^*:=\mathrm{Diag}\left( \mathbf{v}^{t}\right)\in \mathbb{R} ^{m \times m}, ~~i = 1,2,
\end{aligned}
\end{equation}
where  $\mathrm{Diag}(\bu)$ is a diagonal matrix with diagonal entries formed by $\bu$, and for $t = 1,2,$
\begin{align}
\label{def-cdu}
\left( u_{i}^{t}, v_{i}^{t} \right) \begin{cases}
	=\left( 0,-1 \right),          & \text{if}~i\in \eta ^t,\\	
	\in \left\{ \left( \alpha ,\beta \right) :\|( \alpha ,\beta)-(1,-1)\|^2\leq  1 \right\},          & \text{if}~i\in \theta ^t,\\
=\left( 1,0 \right),          & \text{if}~i\in \beta^t,\\
\end{cases}\qquad c_i\begin{cases}
	=1,          & \text{if}~i\in \eta ^3(T),\\
	\in [0,1], & \text{if}~i\in \theta ^3(T),   \\
	=0,    &   \text{if}~i\in \beta^3(T).
\end{cases}
\end{align}
After simple elementary operations, we can get the following reduced   matrix
\begin{equation}
	\label{eq3.2}
	\G:=\left[ \begin{matrix}
		\H^* _{TT}&		\B^*_{T:}&		  \A^{\top}_{:T}  &		\I  \\
		 \I-\C^*  &		0&		0&		-\C^* \\
		 \U_1^*(\B^*_{T:})^{\top}&		\V_1^*&		0&		0 \\
		\U_2^*\A_{:T}&		0&		\V_2^*&		0 		\\
	\end{matrix} \right] \in \mathbb{R} ^{ q \times q},
\end{equation} 
where $q:=2s+k+m$, which has the same nonsingularity as $\W$ in \eqref{eq3.1}. 
Based on these notation, we establish the CD-regularity of ${F\left( \Y^* ;T \right)}$ for any given $T$ as follows. We say ${F\left( \Y^* ;T \right)}$ is CD-regular if any ${\W\in\partial F\left( \Y^* ;T \right)}$ is nonsingular.	
\begin{theorem}
	[{\bf CD-regularity}]\label{thm_CD-regularity of F*}
	Let $\bx^*$ be a $P$-stationary point of (\ref{eq: SCQP}) with ${\tau>0}$. Then for any given ${T\in \mathbb{T}_{\tau}(\Y^*)}$, ${F\left( \Y^* ;T \right)}$ is CD-regular if Assumption \ref{assum_RLICQ} holds at $\Y^*$ and
     \begin{equation}
		\label{eq: strong SOC}
		\langle \bd, \H^*  \bd \rangle > 0, \forall 0 \ne \bd\in \mathbb{Q}\left( \Y^*;T \right),
     \end{equation} 
     where $\mathbb{Q}$ is a cone  defined by
\begin{equation*}
	\label{eq: cone of feasible directions}
	\mathbb{Q}\left( \Y^*;T \right) :=	\left\{
\bd\in \mathbb{R} ^n:\begin{array}{r}		
	\langle \nabla f_i({\bx}^* ), \bd\rangle=0,~i\in \beta^1
	\\
     \langle \ba_i, \bd \rangle=0,~i\in \beta^2
	\\
	 d_{i} =0,~ i\in \beta^3(T) 
        \end{array} \right\}.
\end{equation*}
\end{theorem}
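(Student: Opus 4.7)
The plan is to prove the nonsingularity of any $\W \in \partial F(\Y^*;T)$ by showing the reduced matrix $\G$ in \eqref{eq3.2} is nonsingular, since this equivalence is established by the elementary operations already performed. Take any $(\bp;\bq;\br;\bs) \in \R^s\times\R^k\times\R^m\times\R^s$ in the kernel of $\G$, producing the four equations: (i) $\H^*_{TT}\bp + \B^*_{T:}\bq + \A^\top_{:T}\br + \bs = 0$, (ii) $(\I-\C^*)\bp - \C^*\bs = 0$, (iii) $\U_1^*(\B^*_{T:})^\top\bp + \V_1^*\bq = 0$, and (iv) $\U_2^*\A_{:T}\bp + \V_2^*\br = 0$. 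The goal is to show all four blocks vanish.

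First, I would left-multiply (i) by $\bp^\top$ and show that the three cross terms are individually non-negative. For each $i\in[k]$, combining (iii) with the description of $(u_{1,i},v_{1,i})$ in \eqref{def-cdu} gives $u_{1,i}\langle(\nabla f_i(\bx^*))_T,\bp\rangle q_i = -v_{1,i} q_i^2 \ge 0$ since $u_{1,i}\ge 0$ and $v_{1,i}\le 0$; a case analysis on $\eta^1,\theta^1,\beta^1$ (including the boundary case $u_{1,i}=0$ forcing $v_{1,i}=-1$ and hence $q_i=0$) shows $\bp^\top\B^*_{T:}\bq\ge 0$, and analogously $\bp^\top\A^\top_{:T}\br\ge 0$. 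For the last term, equation (ii) gives $p_i s_i \cdot c_i = (1-c_i)p_i^2 \ge 0$ componentwise; separating $\eta^3(T)$ ($s_i=0$), $\beta^3(T)$ ($p_i=0$), and $\theta^3(T)$ (either factor vanishes or $c_i\in(0,1)$) yields $\bp^\top\bs\ge 0$. Consequently, $\bp^\top\H^*_{TT}\bp \le 0$.

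Next, I extend $\bp$ to $\bd\in\R^n$ with $\bd_T=\bp$, $\bd_{\overline{T}}=0$. Then $\bd^\top\H^*\bd = \bp^\top\H^*_{TT}\bp\le 0$. I claim $\bd\in\mathbb{Q}(\Y^*;T)$: for $i\in\beta^1$, (iii) with $(u_{1,i},v_{1,i})=(1,0)$ forces $\langle\nabla f_i(\bx^*),\bd\rangle=\langle(\nabla f_i(\bx^*))_T,\bp\rangle=0$; for $i\in\beta^2$ similarly; and for $i\in\beta^3(T)$, (ii) with $c_i=0$ yields $p_i=d_i=0$. Assumption \eqref{eq: strong SOC} therefore forces $\bd=0$, i.e.\ $\bp=0$. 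Moreover, equality throughout the $\ge 0$ estimates forces $q_i=0$ for $i\in\theta^1$ with $v_{1,i}\ne 0$, $r_i=0$ for $i\in\theta^2$ with $v_{2,i}\ne 0$, and $s_i=0$ for $i\in\theta^3(T)$ with $c_i>0$; together with (iii), (iv) at $\bp=0$ and (ii) at $\bp=0$, this leaves only $q_i$ (for $i\in\beta^1\cup\theta^{1,0}$, where $\theta^{1,0}:=\{i\in\theta^1:v_{1,i}=0\}$, forcing $u_{1,i}=1$), $r_j$ (analogously), and $s_\ell$ (for $\ell\in\mathcal{A}_3(\bx^*;T)$) possibly nonzero.

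Finally, with $\bp=0$, equation (i) becomes the linear dependence
\[
\sum_{i\in\beta^1\cup\theta^{1,0}} q_i\,(\nabla f_i(\bx^*))_T + \sum_{j\in\beta^2\cup\theta^{2,0}} r_j\,(\ba_j)_T + \sum_{\ell\in\mathcal{A}_3(\bx^*;T)} s_\ell\,(\mathbf{e}_\ell)_T = 0.
\]
Since $\Gamma_*\subseteq T$ by Theorem~\ref{thm3}, the linear independence of these vectors on $\Gamma_*$ from Assumption~\ref{assum_RLICQ} lifts to linear independence on $T$ (projecting back to $\Gamma_*$ kills the entries for $\ell\in T\setminus\Gamma_*$, giving the original LICQ plus $s_\ell=0$ for the remaining $\ell$ via linear independence of standard basis vectors in $\R^{|T|}$). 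Hence every coefficient vanishes, yielding $\bq=\br=\bs=0$. The main obstacles are the careful bookkeeping of the mixed cases on $\theta^1,\theta^2,\theta^3(T)$ when extracting sign information and support structure, and the lifting argument from $\Gamma_*$-restricted LICQ to $T$-restricted LICQ; both are routine once organized by the structural rules in \eqref{def-cdu}.
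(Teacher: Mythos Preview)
Your proposal is correct and follows essentially the same route as the paper: reduce to $\G$, multiply the first block equation by $\bp^\top$, use the sign structure of $(\U_i^*,\V_i^*,\C^*)$ from \eqref{def-cdu} to show the cross terms are nonnegative, invoke \eqref{eq: strong SOC} on the extended vector $\bd$ to force $\bp=0$, and then apply the restricted LICQ to kill the remaining multipliers. The paper organizes the same case analysis via auxiliary index sets $\widetilde{\eta}^t,\widetilde{\theta}^t,\widetilde{\beta}^t$ and records the intermediate identities in a table, but the logical skeleton is identical; your lifting argument from $\Gamma_*$ to $T$ is in fact slightly more explicit than the paper's one-line appeal to Assumption~\ref{assum_RLICQ}.
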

\begin{proof} Given any ${T\in \mathbb{T}_{\tau}(\Y^*)}$, to show the nonsingularity of ${\W\in \partial F\left( \Y^*;T\right)}$ is equivalent to show that the following homogeneous system has a unique solution 0,
	$$
	\W\left( \bd^x;~ \bd^\nu;~  \bd^\mu;~  \bd^\lambda \right)=0.
	$$
	This condition is equivalent to
		\begin{align}\label{dx-barT-dnu-barT-0}
		\begin{cases}
		0= \bd^x_{\overline{T}},\\	
		0= \bd^\nu_{\overline{T}},\\
	0=\G\left( \bd^x_{T};~\bd^\nu_T;~ \bd^\mu;~ \bd^\lambda\right),
	\end{cases}
\end{align}
which by \eqref{eq3.2}, we  get
	\begin{equation}
		\label{eq3.4}
		\begin{cases}
			0=\H^*_{TT}\bd^x_{T}+  \bd^\nu_{T}+\B^*_{T:}\bd^{\mu}+\A^{\top}_{:T}\bd^{\lambda} ,\\
			0=(\I-\C^*)\bd^x_{T} -\C^*\bd^\nu_{T},\\
			0=\U _1^* (\B^*_{T:})^{\top}\bd^x_{T}+\V_1^*\bd^{\mu},\\ 
			0=\U _2^* \A_{:T}\bd^x_{T} +\V_2^*\bd^{\lambda}.
		\end{cases}
	\end{equation}
	Since $\Y^*$ satisfies (\ref{eq2.5}) and \eqref{def-cdu}, we have
	$$
	\begin{aligned}
		\eta^t & = \{i  : \left(u_i^{t}, v_i^{t}\right)=(0,-1) \}, &&\eta ^3(T)= \{ i\in T: c_i=1  \}, \\[1ex]
            \theta^t & = \{i  :\left\|\left(u_i^{t}, v_i^{t}\right)-(1,-1)\right\|^2\leq  1 \}, \qquad&&\theta^3(T)= \{ i\in T: c_i\in[0,1] \},\\[1ex]
		\beta^t & = \{i  : \left(u_i^{t}, v_i^{t}\right)=(1,0) \},&& \beta^3(T)= \{ i\in T: c_i=0  \},
	\end{aligned}
	$$
where $t=1,2$.	Based on these indices, we further define
\begin{align*}
\widetilde{\eta}^t&:=\{i\in\theta ^t:  \left(u_i^{t}, v_i^{t}\right)=(0,-1)\}\cup \eta^t,&&\widetilde{\eta}^3 := \{i\in\theta^3(T): c_i = 1\}\cup  \eta^3(T),\\[1ex]
\widetilde{\theta}^t&:= \{i  :\left\|\left(u_i^{t}, v_i^{t}\right)-(1,-1)\right\|^2 < 1 \},&&\widetilde{\theta}^3:=\{i\in\theta^3(T): c_i\in(0,1)\},\\[1ex]
		\widetilde{\beta}^t &: = \{i\in\theta ^: \left(u_i^{t}, v_i^{t}\right)=(1,0) \} \cup \beta^t,&& \widetilde{\beta}^3:= \{i\in\theta^3(T): c_i = 0\}\cup  \beta^3(T).
\end{align*}
One can observe that $u_i^{t}> 0$ and $v_i^{t}<0$ for any $i\in\widetilde{\theta}^t, t=1,2$.  These and the last three equations in \eqref{eq3.4} lead to the facts in Table \ref{tab:facts-eta-theta-bata},
\begin{table}[H]  
\renewcommand{\arraystretch}{1.25}\addtolength{\tabcolsep}{15pt}
\caption{Conditions on $(\widetilde{\eta}^t, \widetilde{\theta}^t, \widetilde{\beta}^t),t=1,2,3.$ \label{tab:facts-eta-theta-bata}}
\centering  
\begin{tabular}{lrrr}\hline
 & $i \in \widetilde{\eta}^t$ & $i \in \widetilde{\theta}^t$ & $i \in \widetilde{\beta}^t$ \\\hline 
		 $t=1$ & $d^{\mu}_i=0$ & $
		 ((\B^*_{T:})^{\top}\bd^x_{T})_i=- (v_i^1/{u_i^1}) d^{\mu}_i
		 $ & $ \langle (\nabla f_i(\bx^*))_T, \bd^x _T\rangle=0$ \\
		 
		$t=2$ & $d^{\lambda}_i=0$ & $
		\langle (\ba_i)_T, \bd^x_T\rangle=- (v_i^2/{u_i^2}) d^{\lambda}_i
		$ &  $\langle (\ba_i)_T,~ \bd^x_T\rangle=0$ \\
$t=3$ 	 & $ i\in T, ~d^{\nu}_i=0$ & $
i\in T,  d_i^x=  c_id^{\nu}_j/(1-c_i)$ &  $d_i^x=0$ \\
		\hline
\end{tabular}
\end{table}

The conditions in the third column of Table \ref{tab:facts-eta-theta-bata}, $\beta^t\subseteq\widetilde{\beta}^t, t=1,2$, and  $\beta^3(T)\subseteq\widetilde{\beta}^3$ yield that
\begin{align}
\label{eq: dx-in-QYT}
\bd^x \in \mathbb{Q}\left( \Y^*;T \right).\end{align}
Multiplying the both sides of the first equation in \eqref{eq3.4} by $\bd^x_T$ derives
\begin{align}
\label{eq: eq in CD regular}
\langle \bd^x_T, \H ^*_{TT} \bd^x_T \rangle +\sum_{i\in T\cap\widetilde{\theta}^3} \frac{c_i(d^{\nu}_j)^2}{1-c_i}-\sum_{i\in \widetilde{\theta}^1}  \frac{v_i^1(d^{\mu}_i)^2}{u_i^1}  -\sum_{i\in\widetilde{\theta}^2}\frac{v_i^2(d^{\lambda}_i)^2}{u_i^2} =0.
\end{align}
 which together with  \eqref{eq: strong SOC} and \eqref{eq: dx-in-QYT}  immediately results in $$\left(\bd^x_T,~\bd^{\nu}_{T\cap\widetilde{\theta}^3},~\bd^{\mu}_{\widetilde{\theta}^1},~\bd^{\lambda}_{\widetilde{\theta}^2}\right)=(0,0,0,0).$$
This recalling the first column of Table \ref{tab:facts-eta-theta-bata} further derives
 $$\left(\bd^{\nu}_{T\cap(\widetilde{\eta}^3\cup\widetilde{\theta}^3)},~\bd^{\mu}_{\widetilde{\eta}^1\cup\widetilde{\theta}^1},~\bd^{\lambda}_{\widetilde{\eta}^2\cup\widetilde{\theta}^2}\right)=(0,0,0).$$
  By inserting these values into the first equation in (\ref{eq3.4}), it holds that
	\begin{equation} \label{Id-Bd-Ad}
	 \I_{T (T\cap\widetilde{\beta}^3)}\bd^{\nu}_{(T\cap\widetilde{\beta}^3)}	+ \B^*_{T\widetilde{\beta}^1}\bd^{\mu}_{\widetilde{\beta}^1}+\A^{\top}_{\widetilde{\beta}^2T}  \bd^{\lambda}_{\widetilde{\beta}^2}   =0.
	\end{equation}
	One can check that  ${\widetilde{\beta}^1\subseteq \mathcal{A}_1(\bx^*)}$, ${\widetilde{\beta}^2\subseteq \mathcal{A}_2(\bx^*)}$,   $T\cap{\widetilde{\beta}^3\subseteq \mathcal{A}_3(\bx^*;T)}$, and $\Gamma_*\subseteq T$ for any $T \in \mathbb{J}_s(\bx^*)$. Hence, Assumption \ref{assum_RLICQ} indicates the following groups of vectors 
	are linearly independent,	 $$ \Big\{(\nabla f_i(\bx^*))_{T}:i\in \widetilde{\beta}^1\Big\} \cup \Big\{\left( \ba_{i} \right) _{T}: i\in \widetilde{\beta}^2\Big\} \cup \Big\{ \left( \mathbf{e}_{i}  \right) _{T}: i\in T\cap\widetilde{\beta}^3\Big\},$$ 
which by \eqref{Id-Bd-Ad} implies $(\bd^{\nu}_{(T\cap\widetilde{\beta}^3)}, \bd^{\mu}_{\widetilde{\beta}^1}, \bd^{\lambda}_{\widetilde{\beta}^2})=(0,0,0)$. Overall, $(\bd^x_T,\bd^\nu_T,\bd^\mu,\bd^\lambda)=0$, which combining the first two conditions in \eqref{dx-barT-dnu-barT-0} yields the conclusion. \end{proof}
In the sequel, we pay attention to point ${\Y=(\bx, \bnu, \bmu, \blambda)}$ with  ${\bx\in\mathbb{S}}$ in a neighbourhood of $\Y^*$. Therefore, we define a set by
$$\E:=\mathbb{S}\times\mathbb{R}^{n}\times\mathbb{R}^k \times \mathbb{R}^m.$$  
Moreover, we note that each $\nabla f_i(\bx)$ is Lipschitz continuous, so ${\nabla_x L(\bx, \bmu, \blambda) + \bnu}$ is locally Lipschitz continuous. Leveraging this fact, we build the following property of any  $\Y$ around $\Y^*$.
\begin{lemma}
	\label{lem2}
	 Let $\bx^*$ be a P-stationary point of (\ref{eq: SCQP}) with ${\tau>0}$. Then there is a neighbourhood $\mathbb{N}^*$ of $\Y^*$ such that for any $\Y \in \E\cap\N$,
	 \begin{equation}
	 	\label{eq3.7}
	 	\mathbb{T}_{\tau}(\Y) \subseteq \mathbb{T}_{\tau}\left(\Y^*\right),\qquad \Gamma_* \subseteq (\supp(\bx) \cap T),~ \forall~ T \in \mathbb{T}_{\tau}(\Y ).
	 \end{equation}
Consequently, $F\left( \Y^*;T\right) =0$ for any $T \in \mathbb{T}_{\tau}(\Y).$
\end{lemma}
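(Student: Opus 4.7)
The plan is to exploit the continuity of the map $\Y \mapsto \bu := \bx - \tau(\nabla_x L(\bx,\bmu,\blambda) + \bnu)$ and the structural information about $\bu^*$ that comes out of the P-stationarity conditions \eqref{eq2.5-1}. Since $\nabla_x L$ is twice continuously differentiable in $\bx$ and linear in $(\bmu,\blambda)$, $\bu$ is continuous in $\Y$. My strategy is to split into the two cases $\|\bx^*\|_0 = s$ and $\|\bx^*\|_0 < s$, and in each one control the ordering of $|u_i|$ so tightly that $\mathbb{T}_\tau(\Y)$ must lie inside $\mathbb{T}_\tau(\Y^*)$ for $\Y$ near $\Y^*$.

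First I would compute $\bu^*$ explicitly using the first two rows of \eqref{eq2.5-1} together with $\bnu^*_{\overline{\Gamma}_*}=0$. In both cases this gives $\bu^*_{\Gamma_*} = \bx^*_{\Gamma_*}$, while $\bu^*_{\overline{\Gamma}_*} = -\tau\bg^*_{\overline{\Gamma}_*}$. In the case $\|\bx^*\|_0 = s$, the strict inequality $\tau\|\bg^*_{\overline{\Gamma}_*}\|_\infty < x^*_{(s)}$ from \eqref{eq2.5-1} yields the key gap
\begin{equation*}
\min_{i\in\Gamma_*}|u^*_i| = x^*_{(s)} > \tau\|\bg^*_{\overline{\Gamma}_*}\|_\infty = \max_{j\in\overline{\Gamma}_*}|u^*_j|.
\end{equation*}
By continuity of $\bu$ in $\Y$, I can shrink $\mathbb{N}^*$ so that this strict ordering is preserved for every $\Y \in \E \cap \mathbb{N}^*$, forcing $\mathbb{T}_\tau(\Y) = \{\Gamma_*\} = \mathbb{T}_\tau(\Y^*)$. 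Further shrinking $\mathbb{N}^*$ so $\|\bx-\bx^*\| < \min_{i\in\Gamma_*}|x^*_i|$ guarantees $\Gamma_*\subseteq\supp(\bx)$, hence $\Gamma_*\subseteq\supp(\bx)\cap T$.

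In the case $\|\bx^*\|_0 < s$, P-stationarity gives $\bg^*_{\overline{\Gamma}_*} = 0$, so $\bu^*_{\overline{\Gamma}_*} = 0$ and $\bu^*_{\Gamma_*}$ has only nonzero entries. By continuity, for $\Y$ sufficiently close to $\Y^*$ the entries $|u_i|$ for $i\in\Gamma_*$ stay strictly above the entries $|u_j|$ for $j\in\overline{\Gamma}_*$, so any $T\in\mathbb{T}_\tau(\Y)$ must satisfy $|T| = s$ and $T \supseteq \Gamma_*$. By Theorem~\ref{thm3}, $\mathbb{T}_\tau(\Y^*) = \mathbb{J}_s(\bx^*) = \{T : |T| = s,\ T \supseteq \Gamma_*\}$, so $T \in \mathbb{T}_\tau(\Y^*)$. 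The inclusion $\Gamma_*\subseteq\supp(\bx)$ again follows by shrinking $\mathbb{N}^*$ below $\min_{i\in\Gamma_*}|x^*_i|$.

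Finally, the consequence $F(\Y^*;T) = 0$ for every $T\in\mathbb{T}_\tau(\Y)$ is immediate from $\mathbb{T}_\tau(\Y) \subseteq \mathbb{T}_\tau(\Y^*)$ combined with Theorem~\ref{thm3}, which asserts $F(\Y^*;T) = 0$ for all $T\in\mathbb{T}_\tau(\Y^*)$. The only mildly delicate point is choosing the radius of $\mathbb{N}^*$ simultaneously small enough to (i) preserve the strict gap on $|\bu|$ and (ii) preserve $\supp(\bx)\supseteq\Gamma_*$; since both requirements are open conditions, I just take the intersection of the corresponding open balls, which is the main (minor) obstacle to keeping the argument clean across both cases.
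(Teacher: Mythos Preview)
Your proposal is correct and follows essentially the same approach as the paper: a case split on $\|\bx^*\|_0=s$ versus $\|\bx^*\|_0<s$, exploiting the structural information in \eqref{eq2.5-1} to separate the magnitudes of $\bu^*$ on $\Gamma_*$ versus $\overline{\Gamma}_*$, and then invoking \eqref{T=J} from Theorem~\ref{thm3}. Your argument via direct continuity of $\bu$ is slightly more explicit than the paper's, which phrases the same estimate through local Lipschitz continuity of $\nabla_x L+\bnu$ and the bound $\tau\|\nabla_x L(\bx,\bmu,\blambda)+\bnu\|_\infty<\min_{i\in\Gamma_*}|x_i|$, but the underlying mechanism is identical.
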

\begin{proof} Since the second claim follows from \eqref{eq3.7} and \eqref{eq2.11}, we only prove \eqref{eq3.7}.  Let neighbourhood $\mathbb{N}^*$ be a sufficiently small region. Then \eqref{eq3.7} is true when $\left\|\bx^*\right\|_0=s$ due to $$\mathbb{T}_{\tau}(\Y)=\mathbb{T}_{\tau}\left(\Y^*\right)=\left\{\Gamma_*\right\}=\{\supp(\bx)\}.$$ Therefore, we only prove the case of $\left\|\bx^*\right\|_0<s$. One can easily check that for any $\Y \in \E\cap\N$,  
\begin{align}\label{Js-Js-Tt}
\Gamma_* \subseteq \supp(\bx),\qquad \mathbb{J}_s(\bx) \subseteq \mathbb{J}_s(\bx^*)=\mathbb{T}_{\tau}\left(\Y^*\right),
\end{align}
where the equality is from \eqref{T=J}.  If $\Gamma_*=\emptyset$, the conclusion holds evidently. Now we focus on $\Gamma_*\neq\emptyset$. This means that ${\min_{i\in\Gamma_*}|x_{i}|>0}$ because $\bx$ is close to $\bx^*$. Moreover, it follows from \eqref{eq2.5-1} that ${\bg^*+\bnu^*=0}$. As a result of the  locally Lipschitz continuity, $\tau\|\nabla_x L\left(\bx, \bmu, \blambda\right)+\bnu\|_{\infty}<\min_{i\in\Gamma_*}|x_i|$,   which indicates that
$$\Gamma_*\in T,~~ \forall~T\in \mathbb{T}_{\tau}(\Y).$$
This recalling the definition of $\mathbb{J}_s(\bx^*)$ enables us to conclude that $\mathbb{T}_{\tau}(\Y) \subseteq \mathbb{J}_s(\bx^*)$, which together with \eqref{Js-Js-Tt} shows the desired result. 
\end{proof}
\begin{theorem}
	\label{thm_locally CD-regularity}
	Let $\bx^*$ be a P-stationary point of (\ref{eq: SCQP}) with ${\tau>0}$ and the assumptions in Theorem \ref{thm_CD-regularity of F*} hold at $\Y^*$. Then there is a neighbourhood $\mathbb{N}^*$ of $\Y^*$ such that the following statements are valid for any $\Y \in \E\cap\N$.
	\begin{itemize}[leftmargin=15pt]
	\item[1)] Every $\W \in \partial F(\Y ; T)$  is nonsingular for each given $T \in \mathbb{T}_{\tau}(\Y)$.
	\item[2)] There is a constant $C^*>0$ such that
	\begin{align}\label{thm_locally CD-regularity-equ}
	\left\| \W ^{-1}\right\| \leq C^*,~~ \forall~\W \in \partial F(\Y ; T),~\forall T \in \mathbb{T}_{\tau}(\Y).  
	\end{align}
	where $\left\| \W \right\|$ represents the spectral norm of $\W$. 
\end{itemize}
\end{theorem}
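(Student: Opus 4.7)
The plan is to propagate the pointwise CD-regularity at $\Y^*$ established in Theorem~\ref{thm_CD-regularity of F*} to a full neighbourhood, using upper semicontinuity of the Clarke generalized Jacobian together with the discrete-index control afforded by Lemma~\ref{lem2}. For each fixed $T \in \mathbb{T}_{\tau}(\Y^*)$, Theorem~\ref{thm_CD-regularity of F*} guarantees that every $\W^* \in \partial F(\Y^*;T)$ is nonsingular. Since $\partial F(\Y^*;T)$ is a nonempty compact convex subset of $\mathbb{R}^{p\times p}$, the continuous map $\W^* \mapsto \|\W^{*-1}\|$ attains a finite maximum $C_T^*$ on it. By continuity of matrix inversion on the open set of invertible matrices, one can choose $\epsilon_T>0$ such that every $p\times p$ matrix within spectral distance $\epsilon_T$ of $\partial F(\Y^*;T)$ is still invertible with inverse norm at most $2C_T^*$.

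Next, I would invoke upper semicontinuity of $\partial F(\cdot;T)$ as a set-valued map, which holds because $F(\cdot;T)$ is locally Lipschitz (it is in fact strongly semismooth, as already noted in the paper). This yields a neighbourhood $\mathbb{N}_T$ of $\Y^*$ on which $\partial F(\Y;T) \subseteq \partial F(\Y^*;T)+\{\mathbf{M}:\|\mathbf{M}\|\le \epsilon_T\}$, so that every $\W \in \partial F(\Y;T)$ is nonsingular and satisfies $\|\W^{-1}\|\le 2C_T^*$ for $\Y\in \mathbb{N}_T$.

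To produce a single neighbourhood and constant independent of the varying $T$, I use Lemma~\ref{lem2} to pick a neighbourhood $\mathbb{N}_0$ on which $\mathbb{T}_{\tau}(\Y)\subseteq \mathbb{T}_{\tau}(\Y^*)$ for all $\Y\in \E\cap \mathbb{N}_0$. Because $\mathbb{T}_{\tau}(\Y^*)$ is a finite collection of subsets of $[n]$, setting
\[
\mathbb{N}^* := \mathbb{N}_0 \cap \bigcap_{T\in\mathbb{T}_{\tau}(\Y^*)} \mathbb{N}_T,\qquad C^* := 2\max_{T\in\mathbb{T}_{\tau}(\Y^*)} C_T^*,
\]
is well-defined and finite. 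For any $\Y\in \E\cap \mathbb{N}^*$ and any $T \in \mathbb{T}_{\tau}(\Y)$, Lemma~\ref{lem2} forces $T\in \mathbb{T}_{\tau}(\Y^*)$, so the previous two paragraphs apply and yield both the nonsingularity claim in~(1) and the uniform bound~\eqref{thm_locally CD-regularity-equ}.

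The main obstacle, to my mind, is justifying the upper-semicontinuity step cleanly in the presence of the discrete index $T$ and the set-valued selection rules~\eqref{def-DCUV}–\eqref{def-cdu}. Upper semicontinuity of the Clarke Jacobian is standard for locally Lipschitz maps, but one should verify that the partitions $(\eta^t,\theta^t,\beta^t)$ at $\Y$ are inherited from those at $\Y^*$: the strict inequalities defining $\eta^t$ and $\beta^t$ in~\eqref{index set 3} and~\eqref{index set 3-1} are stable under small perturbations of $\Y$, so any selection of $(\mathbf{u}^t,\mathbf{v}^t,\mathbf{c})$ at $\Y$ corresponds to an admissible selection at $\Y^*$, up to a Lipschitz perturbation in the continuous blocks $\H_{TT}, \B^*_{T:}, \A_{:T}$. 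Together with the Lipschitz dependence of these blocks on $\Y$, this is exactly what is needed to place $\partial F(\Y;T)$ into an $\epsilon_T$-neighbourhood of $\partial F(\Y^*;T)$.
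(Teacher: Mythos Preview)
Your proposal is correct and follows essentially the same route as the paper: invoke Theorem~\ref{thm_CD-regularity of F*} for nonsingularity at $\Y^*$, use Lemma~\ref{lem2} to restrict attention to finitely many $T\in\mathbb{T}_{\tau}(\Y^*)$, and propagate to a neighbourhood via upper semicontinuity of the Clarke Jacobian together with compactness. The only cosmetic difference is that the paper packages the last step as a contradiction argument with sequences $\W^\ell\to\W^*$ rather than your direct $\epsilon_T$-neighbourhood construction; your explicit verification that the partitions $(\eta^t,\theta^t,\beta^t)$ at nearby $\Y$ embed into those at $\Y^*$ is not needed once you invoke the abstract upper semicontinuity of $\partial F(\cdot;T)$, but it does no harm.
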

\begin{proof}
	1) By invoking Theorems \ref{thm_CD-regularity of F*}, for each given ${T_* \in \mathbb{T}_{\tau}(\Y^*)}$, any ${\W^* \in \partial F\left( \Y^*;T_*\right)}$ is nonsingular.  Consider any point  $\Y \in \E\cap\N$. Lemma \ref{lem2} means that ${\mathbb{T}_{\tau}(\Y)\subseteq\mathbb{T}_{\tau}(\Y^*)}$. As a result, for each given ${T \in \mathbb{T}_{\tau}(\Y)}$,  any $\W^* \in \partial F\left( \Y^*;T\right) $ is nonsingular.  We first prove that for each given $T \in \mathbb{T}_{\tau}(\Y)$, every ${\W \in \partial F(\Y ; T)}$  is nonsingular and there is a constant $C_T>0$ such that ${\|\W^{-1}\|\leq C_T}$. If this is not true, then given such $T$, there is a sequence ${\Y^{\ell}(\in\mathbb{N}^*)\rightarrow \Y^*}$ and $\W^{\ell} \in \partial F\left( \Y^{\ell};T \right)$ satisfying either all $\W^{\ell}$ are singular or $\|(\W^{\ell})^{-1}\| \rightarrow \infty$. The locally Lipschitz continuity of $F\left(\cdot;T\right)$  means $\partial F\left( \cdot;T\right)$ is bounded in $\mathbb{N}^*$. By passing to a subsequence, we may assume ${\W^{\ell} \rightarrow {\W}^*}$. Then ${\W}^*$ is singular, contradicting to the nonsingularity of ${\W}^*$   
	
	2) We note that there are finitely many  $T \in \mathbb{T}_{\tau}(\Y)$. By setting ${C^*=\max_{T \in \mathbb{T}_{\tau}(\Y)}C_T}$, condition \eqref{thm_locally CD-regularity-equ} follows immediately.
\end{proof}

\section{A Semi-smooth Newton Algorithm}
\label{A semi-smooth Newton algorithm}
In this section, we leverage a semi-smooth Newton-type method to solve stationary equations \eqref{eq2.11} for problem (\ref{eq: SCQP}), before which we define some notation for the ease of reading.  Given a point ${\Y=(\bx, \bnu, \bmu, \blambda)\in    \mathbb{R}^n\times \mathbb{R}^{n} \times\mathbb{R}^k \times \mathbb{R}^m }$ and $T\in\T_{\tau}(\Y)$,  we define $\by$ and a direction $\bd$ by
\begin{equation}
\label{vectorize-Y}
\begin{aligned}
\by&:=\left( \bx_T;~\bx_{\overline{T}};~\bnu_T;~\bnu_{\overline{T}};~\bmu;~\blambda\right)~~~\in\R^p,\\
\bd&:=\left(  \bd^x_T;~\bd^x_{\overline{T}};~\bd^\nu_T;~\bd^\nu_{\overline{T}};~\bd^\mu;~\bd^\lambda  \right) \in\R^p, 
\end{aligned} \end{equation}
namely, $\by$ is a vectorization of $\Y$. In addition,  by letting
\begin{align*}
J&:=n+\overline{T},  \qquad K:=[p]\setminus (\overline{T}\cup J),
\end{align*} 
one can write $\bd=(\bd_{\overline{T}};~\bd_{J};~\bd_{K})$, where
\begin{align*}
 \bd_{\overline{T}}=\bd^x_{\overline{T}}\in\R^{n-s},\qquad
 \bd_{J}=\bd^\nu_{\overline{T}}\in\R^{n-s},\qquad
 \bd_{K}=\left(  \bd^x_T;~\bd^\nu_T;~\bd^\mu;~\bd^\lambda  \right)\in\R^{q}.
\end{align*}
\subsection{Algorithm design}
To employ the Newton method, we need to find a solution to a system of linear equations. For (\ref{eq: SCQP}),  given current $\Y^{\ell}$ and $T_{\ell}\in\T_s(\Y^{\ell})$ at step $\ell$, we aim to find a direction $\bd^\ell$ by solving stationary equations $\W^{\ell}\bd^{\ell}= -F(\Y^{\ell};T_{\ell}) $. In fact, it is easy to see from \eqref{eq3.1} and \eqref{eq3.2} that
		\begin{equation}\label{Wd=FYT}
		\W^{\ell}\bd^{\ell}= -F(\Y^{\ell};T_{\ell}) ~~\Longleftrightarrow~~	\begin{cases}
				\bd_{\overline{T}_\ell}^{\ell}=-\bx_{\overline{T}_{\ell}}^{\ell},\\
				\bd_{J_\ell}^{\ell}=-\bnu_{\overline{T}_{\ell}}^{\ell},\\
				\G^{\ell}\bd^{\ell}_{K_\ell} =\D^{\ell}\bx_{\overline{T}_{\ell}}^{\ell} - \left({F}(\Y^{\ell};T_{\ell})\right)_{K_\ell},		\end{cases}
		\end{equation}
where $\D^{\ell}$ is a sub-matrix of $\W^{\ell}$  defined by
\begin{equation}
	\label{eq3.1-D}
	\D^{\ell}:=\left[ \begin{matrix}
	 \H^\ell_{T_\ell\overline{T}_\ell} \\
0 \\
\U_1^\ell(\B^\ell_{\overline{T}_\ell:})^{\top}\\
\U_2^\ell\A_{:\overline{T}_\ell}\end{matrix} \right] \in \mathbb{R} ^{q\times (n-s)}.
\end{equation}

\begin{algorithm}[!t]
	\caption{{SNSQP}: Semi-smooth Newton algorithm for (\ref{eq: SCQP})} \label{Alg1:SMN} 
	 Initialize $\Y^0$, ${\tau > 0}$, ${\varepsilon> 0}$, ${\rho \in (0,1)}$,   and ${\sigma \in (0,1/2)}$. Select ${T_0 \in {\mathbb{T}}_{\tau}(\Y^0)}$,  and set $\ell\Leftarrow0$.
	
	\While{$\|F(\Y^{\ell};T_{\ell})\| > \varepsilon $}{
	
	 Choose $\W^{\ell} \in \partial F(\Y^{\ell};T_{\ell})$.
	 
	 Compute $ \bd_{\overline{T}_\ell}^{\ell}=-\bx_{\overline{T}_{\ell}}^{\ell}$ and $\bd_{J_\ell}^{\ell}=-\bnu_{\overline{T}_{\ell}}^{\ell}$.
	 
	 Compute $\bd^{\ell}_{K_\ell}$ by solving \eqref{eq: newton step1} if it is solvable and by solving \eqref{eq: newton step2}  otherwise.

     Find the smallest non-negative integer $t_{\ell}$ such that
	\begin{equation}\label{line-search-step}
	\Psi \left( \by^{\ell}+ \bd^{\ell}(\rho^{t_{\ell}});T_{\ell} \right)\leq  \Psi \left( \by^{\ell} ;~T_{\ell} \right)+\sigma \rho ^{t_{\ell}} \left\langle F(\Y^{\ell};~T_{\ell}), \W^{\ell} \bd^{\ell}\right\rangle
	\end{equation}
	Update $\by^{{\ell}+1}=\by^{\ell}+ \bd^{\ell}(\rho^{t_{\ell}})$ and ${T_{\ell+1}} \in {\mathbb{T}}_{\tau}(\Y^{\ell+1})$. Set  $\ell \Leftarrow {\ell} + 1$.
	}
\end{algorithm}

Hereafter, let  ${\H^{\ell}=:\nabla_{xx}^2 L\left( \bx^\ell,\bmu ^\ell,\blambda ^\ell\right)}$.  Matrices $\W^{\ell}$ and $\G^{\ell}$ are calculated similarly to \eqref{eq3.1} and \eqref{eq3.2}, where $(\B^\ell, \C^\ell, \U_1^\ell, \U_2^\ell, \V_1^\ell, \V_2^\ell)$ are computed using \eqref{def-DCUV} by replacing $\Y^*$ by $\Y^\ell$. The equivalence in \eqref{Wd=FYT} enables a significant dimensional reduction, from $(p\times p)$ to $(q\times q)$, making large-scale computation possible. However,  when matrix $\G^{\ell}$ is in a bad condition,  solving the following equations may yield an unfavorable direction $\bd^{\ell}_{K_\ell}$, 
	\begin{equation}\label{eq: newton step1}
	\G^{\ell}\bd^{\ell}_{K_\ell} =\D^{\ell}\bx_{\overline{T}_{\ell}}^{\ell} - \left({F}(\Y^{\ell};T_{\ell})\right)_{K_\ell}.
		\end{equation}
 To overcome such a drawback, we turn to solve the following equations as a compensation, 
	\begin{equation}
		\label{eq: newton step2}
			\left({\G^{\ell}}^{\top}\G^{\ell} + \kappa_{\ell}\I\right)\bd_{K_\ell}^{\ell}= {\G^{\ell}}^{\top}\left(\D^{\ell}\bx_{\overline{T}_{\ell}}^{\ell} - \left({F}(\Y^{\ell};T_{\ell})\right)_{K_\ell}\right),
\end{equation}
where ${\kappa_{\ell}>0}$ is a decreasing scalar along with $\ell$ rising and ${\lim_{\ell\to\infty}\kappa_{\ell}=0}$.  For example, ${\kappa_{\ell}=0.01/\ell}$ is used in the subsequent numerical experiment.  Finally, given a step size ${\alpha>0}$, we define 
\begin{align}
\bd^{\ell}(\alpha):=
\left( 
\bd^{\ell}_{\overline{T}_\ell};~
 \bd^{\ell}_{J_\ell};~
\alpha \bd^{\ell}_{K_\ell} \right)
\end{align}
to ensure point $\bx^{\ell+1}\in\mathbb{S}$. This is because from  \eqref{Wd=FYT}, for any $\alpha>0$, we have
\begin{align}\label{x-ell+1-feasible}
 \by^{\ell+1}=\by^{\ell} + \bd^{\ell}(\alpha) &=
\left(
\bx^{\ell}_{\overline{T}_\ell} + \bd^{\ell}_{\overline{T}_\ell} ;~
\bnu^{\ell}_{J_\ell} + \bd^{\ell}_{J_\ell};~
\by^{\ell+1}_{K_\ell} + \alpha \bd^{\ell}_{K_\ell}
\right) \nonumber\\
& =
\left( 
0 ;~
0 ;~
\by^{\ell+1}_{K_\ell} + \alpha \bd^{\ell}_{K_\ell} 
\right).
\end{align}
We observe that $\by^{\ell+1}_{\overline{T}_{\ell}}$  corresponds to $\bx^{\ell+1}_{\overline{T}_{\ell}}$, which by ${|\overline{T}_{\ell}|=n-s}$ results in $\bx^{\ell+1}\in\mathbb{S}$. Consequently, $\supp(\bx^\ell)\subseteq T_{\ell-1}$, leading to $|T|:=|T_{\ell-1}\cap\overline{T}_{\ell}|\leq s$. This suffices to $\D^{\ell}\bx_{\overline{T}_{\ell}}^{\ell}=\D^{\ell}_{:T}\bx_{T}^{\ell}$ in \eqref{Wd=FYT}, which significantly reduces the computational complexity of evaluating $\D^{\ell}\bx_{\overline{T}_{\ell}}^{\ell}$ from $O(q(n-s))$  to  $O(qs)$, achieving a substantial efficiency gain. Overall, the complexity of solving  \eqref{Wd=FYT} is about $$O(s^3+ks^2+qs).$$
The developed algorithmic framework is presented in Algorithm \ref{Alg1:SMN}, which is called {SNSQP}, an abbreviation for the Semi-smooth Newton for (\ref{eq: SCQP}). To make the algorithm steady, we adopt an Armijo line search to determine the step size at every step based on a merit function,
\begin{equation}
	\label{eq: merit function}
	\Psi \left( \by;T \right) :=\frac{1}{2}\left\| F\left( \Y;T\right) \right\| ^2.
\end{equation}

Finally, it is important to point out that the stationary equations, \eqref{eq2.11}, involve an unknown set $T$. If this set were known in advance, one could employ the semismooth Newton-type method to solve these equations with a fixed $T$ and establish the local convergence rate in a way provided in \cite{qi1997semismooth}. However,  set $T$ may change from one iteration to the next. A different set $T_\ell$ leads to a distinct system of equations, $F(\Y;T_\ell) = 0$. Consequently, at each step, the algorithm computes a Newton direction for a different system of equations, rather than a fixed one. This is where the standard proof for the quadratic rate of convergence does not apply to our case. Nevertheless, we establish this property for Algorithm \ref{Alg1:SMN} under the assumptions in Theorem \ref{thm_CD-regularity of F*}, as outlined below. 
\subsection{Local convergence rate} 
 In the sequel, we use notation $\|\Y\|^2$ to denote $$\|\Y\|^2=\|\bx\|^2+\| \bmu\|^2+\|\blambda\|^2+\| \bnu\|^2.$$
Similar to $\by$ defined in \eqref{vectorize-Y}, let $\bz\in\R^p$ be the vectorization of $\Z\in\E$. 
\begin{corollary}
\label{corollary: descent property of F} Let $\bx^*$ be a P-stationary point of (\ref{eq: SCQP}) with ${\tau>0}$ and the assumptions in Theorem \ref{thm_CD-regularity of F*} hold at $\Y^*$. Then for any given $\epsilon\in(0,1)$, there is a neighbourhood $\N_\epsilon$ of $\Y^*$ such that, for any point $\Y\in\E \cap\N_\epsilon$,  
\begin{equation}
\label{eq: equation of F}
    \left\| F\left(\Z;T \right) \right\| \leq  \epsilon \left\| F\left( \Y;T \right) \right\|, ~~\forall~\W \in \partial F(\Y ; T),~\forall~T \in \mathbb{T}_{\tau}(\Y).
\end{equation}
where $\bz:=\by- \W^{-1}F\left(\Y;T\right)$ is the vectorization of $\Z$. 
\end{corollary}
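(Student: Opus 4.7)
The plan is to recognize that $\bz$ is precisely one semismooth Newton step from $\by$ applied to the equation $F(\cdot; T)=0$, and to run the classical superlinear-convergence argument while carefully tracking the fact that the index set $T$ may vary with $\Y$. The three ingredients I will combine are: (i) Lemma~\ref{lem2}, which gives $F(\Y^*;T)=0$ for every $T\in \mathbb{T}_{\tau}(\Y)$ whenever $\Y$ lies in a small neighborhood of $\Y^*$; (ii) the strong semismoothness of $F(\cdot;T)$ at $\Y^*$, which was recorded just before Theorem~\ref{thm_CD-regularity of F*}; and (iii) the uniform bound $\|\W^{-1}\|\le C^*$ supplied by Theorem~\ref{thm_locally CD-regularity}. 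A secondary ingredient is the local Lipschitz continuity of $F(\cdot;T)$, which follows because every block of $F$ in \eqref{eq2.011} is locally Lipschitz.

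Concretely, I would fix any $\Y\in \E\cap \mathbb{N}^*$, any $T\in \mathbb{T}_{\tau}(\Y)$, and any $\W\in\partial F(\Y;T)$, with $\mathbb{N}^*$ chosen at least as small as the neighborhoods supplied by Lemma~\ref{lem2} and Theorem~\ref{thm_locally CD-regularity}. Using Lemma~\ref{lem2} to substitute $F(\Y^*;T)=0$ into the strong semismoothness relation, I get
$$
F(\Y;T)-\W(\by-\by^*) \;=\; o(\|\by-\by^*\|).
$$
Multiplying by $\W^{-1}$ and using $\|\W^{-1}\|\le C^*$ yields $\by-\by^*=\W^{-1}F(\Y;T)+o(\|\by-\by^*\|)$, hence by the definition $\bz=\by-\W^{-1}F(\Y;T)$,
$$
\bz-\by^* \;=\; (\by-\by^*)-\W^{-1}F(\Y;T) \;=\; o(\|\by-\by^*\|).
$$

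Next I would bound $F(\Z;T)$. By the local Lipschitz continuity of $F(\cdot;T)$ with some constant $L>0$ and by $F(\Y^*;T)=0$,
$$
\|F(\Z;T)\| \;=\; \|F(\Z;T)-F(\Y^*;T)\| \;\le\; L\,\|\bz-\by^*\| \;=\; o(\|\by-\by^*\|).
$$
To turn this into a bound in terms of $\|F(\Y;T)\|$ I would rearrange the semismoothness estimate as $\|\by-\by^*\|\le C^*\|F(\Y;T)\|+\tfrac{1}{2}\|\by-\by^*\|$ on a sufficiently small neighborhood, which gives $\|\by-\by^*\|\le 2C^*\|F(\Y;T)\|$. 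Substituting back,
$$
\|F(\Z;T)\| \;\le\; 2C^* L\cdot \frac{o(\|\by-\by^*\|)}{\|\by-\by^*\|}\cdot \|F(\Y;T)\|,
$$
and the residual factor can be made smaller than $\epsilon$ by shrinking the neighborhood $\mathbb{N}_\epsilon\subseteq \mathbb{N}^*$.

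The main obstacle is that all $o(\cdot)$ estimates must hold uniformly over the finitely many possible $T\in\mathbb{T}_{\tau}(\Y)$ and over all $\W\in\partial F(\Y;T)$; otherwise the size of $\mathbb{N}_\epsilon$ could degenerate. This is handled by appealing to the finiteness of $\mathbb{T}_{\tau}(\Y^*)$ (so one can take the worst $T$ among this finite list), to the uniform bound $\|\W^{-1}\|\le C^*$ from Theorem~\ref{thm_locally CD-regularity}, and to the local boundedness of $\partial F(\cdot;T)$ which follows from the local Lipschitz property. With these uniformities in place, the chain of inequalities above is independent of the particular choice of $\W$ and $T$, yielding the desired estimate \eqref{eq: equation of F}.
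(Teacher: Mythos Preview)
Your proposal is correct and follows essentially the same semismooth-Newton superlinear argument as the paper: both combine Lemma~\ref{lem2} (so $F(\Y^*;T)=0$ for every admissible $T$), the semismoothness estimate at $\Y^*$, and the uniform bound $\|\W^{-1}\|\le C^*$ from Theorem~\ref{thm_locally CD-regularity} to show $\|\bz-\by^*\|$ is small compared with $\|\by-\by^*\|$, then convert to a bound in $\|F(\Y;T)\|$. The only minor difference is that you bound $\|F(\Z;T)\|$ via local Lipschitz continuity, whereas the paper goes through B-differentiability and the directional derivative $F'(\Y^*;T;\Z-\Y^*)$ with the Jacobian-norm bound $c^*$; both yield $\|F(\Z;T)\|\lesssim \|\Z-\Y^*\|$, and the paper additionally records the explicit quadratic estimate $\|\Z-\Y^*\|=O(\|\Y-\Y^*\|^2)$ (from \emph{strong} semismoothness) which it reuses in Theorem~\ref{thm_convergen rate}.
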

\begin{proof} Let $\N$ be a sufficiently small neighbourhood  of $\Y^*$ and consider $\Y\in\E\cap\N$.  For each given $T \in \mathbb{T}_{\tau}(\Y)$, every $\W \in \partial F(\Y ; T)$  is nonsingular from Theorem \ref{thm_locally CD-regularity}. Then it follows,
    \begin{equation}
        \label{eq: Locally quadratic convergence}
		\begin{aligned}
		\left\| \Z  - \Y^* \right\| 	=~& \left\| \by -\W^{-1}{F}( \Y;T ) - \by^* \right\| 
			\\
			 =~&\left\| \W^{-1}  \left( {F}( \Y;T ) - {F}\left( \Y^*;T\right) -\W  ( \by - \by^*) \right)\right\| 
			\\
			 \leq~  &\left\| \W^{-1} \right\| \left\| {F}( \Y;T ) - {F}\left( \Y^*;T \right) -\W( \by - \by^* ) \right\| 
			\\
			 =~&O( \| \Y-\Y^* \|^2) ,
		\end{aligned}
    \end{equation}
    where the second equality is from Lemma \ref{lem2} that ${F(\Y^*;T)=0}$ for any ${T \in \mathbb{T}_{\tau}(\Y)}$ and the last equality holds due to the strongly semismoothness of $F(\cdot;T)$ at $\Y^*$ for given $T$. The above condition means that $\Z$ also lies in $\N$, which by semismooth implying B-differentiability, we have
    \begin{equation}
        \label{eq: B-diff}
        \left\| {F}( \Z;T ) - {F}\left( \Y^*;T\right) - {F}'\left( \Y^*;T; \Z - \Y^* \right) \right\| = o( \| \Z-\Y^* \|)
    \end{equation}
where ${F}'\left( \Y^*;T; \Z - \Y^* \right)$ is the directional derivative of the function ${F}\left( \cdot;T\right)$ at point $\Y^*$ in direction $\Z - \Y^*$. Now we can claim that given $\epsilon\in(0,1)$,  there is a smaller neighbourhood $\N_\epsilon\subseteq\N$ such that, for any $\Y\in\N_\epsilon\cap\E$,
      \begin{equation}
        \label{eq: eq3 about z-z*}
        \begin{aligned}
        \left\| {F}( \Z;T ) - {F}'\left( \Y^*;T; \Z - \Y^* \right) \right\| \leq   \| \Z - \Y^* \|,\\
        \left\|  \Z - \Y^*  \right\| \leq  \kappa_* \| \Y - \Y^* \|,
  		\end{aligned}
  \end{equation}
where $\kappa_*:= {\epsilon}/({C^*(c^*+1)+\epsilon})$, the first inequality is from ${F(\Y^*;T)=0}$ and
\eqref{eq: B-diff}, the second one is due to \eqref{eq: Locally quadratic convergence}, and $c^*:=\sup_{\Y\in \N_\epsilon}\sup_{\W\in\partial F(\Y;T)}\|\W\|$.
The above conditions derive that
 \begin{equation*}
		\begin{aligned}
			\| \Y - \Y^* \| & \leq  \| \Z - \Y \| + \|\Z - \Y^* \|
			\\
			& = \| \W^{-1}{F}( \Y;T ) \| + \|\Z - \Y^* \| 
                \\
			& \leq  C^*\| {F}( \Y;T ) \| + \|\Z - \Y^* \|
			\\
			& \leq  C^*\| {F}( \Y;T ) \| + \kappa_* \|\Y - \Y^* \| ,
		\end{aligned}
  \end{equation*}
 where  the first inequality is from \eqref{thm_locally CD-regularity-equ}.  This leads to
    \begin{align*}
       \| \Y - \Y^* \|  \leq  \frac{ \epsilon}{(c^*+1)\kappa_*} \| {F}( \Y;T ) \| ,
    \end{align*}
 which by  \eqref{eq: eq3 about z-z*} further results in
    \begin{equation*} 
		\begin{aligned}
			\| F(\Z;T) \| & \leq  \left\| {F}'\left( \Y^*;T; \Z - \Y^* \right) \right\| +  \| \Z - \Y^* \|\\[1ex]
			& \leq  (c^* + 1)\| \Z- \Y^* \| 
			\\
			& \leq   (c^* + 1)\kappa_*   \| \Y - \Y^* \|\\
			& \leq \epsilon\| F({\Y};T) \|,
		\end{aligned}
  \end{equation*}
showing the desired result.
\end{proof}

\begin{theorem}
	\label{thm_convergen rate}
	Let $\bx^*$ be a P-stationary point of (\ref{eq: SCQP}) with ${\tau>0}$ and the assumptions in Theorem \ref{thm_CD-regularity of F*} hold at $\Y^*$. Then in a neighborhood of $\Y^*$, Algorithm \ref{Alg1:SMN}  admits full Newton steps and converges to $\Y^*$ quadratically.
\end{theorem}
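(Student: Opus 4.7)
The plan is to prove the theorem in two stages: first showing that the Armijo line search accepts the unit step $t_\ell = 0$ (so the algorithm performs \emph{full Newton steps} eventually), and then deriving the quadratic rate as an immediate corollary of the estimate already embedded in the proof of Corollary~\ref{corollary: descent property of F}.

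First I would shrink the neighborhood $\N^*$ provided by Theorem~\ref{thm_locally CD-regularity} and Corollary~\ref{corollary: descent property of F} down to a single $\N_\epsilon$ with $\epsilon$ chosen to satisfy $\epsilon^2 \leq 1 - 2\sigma$, which is possible since $\sigma\in(0,1/2)$. Within $\N_\epsilon$, Theorem~\ref{thm_locally CD-regularity} guarantees that every $\W^\ell \in \partial F(\Y^\ell;T_\ell)$ is nonsingular with a uniformly bounded inverse, so (through the block-reduction in \eqref{Wd=FYT}) matrix $\G^\ell$ is also nonsingular and system \eqref{eq: newton step1} is solvable; thus the algorithm always uses the genuine Newton direction $\bd^\ell$ satisfying $\W^\ell \bd^\ell = -F(\Y^\ell;T_\ell)$. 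Taking the inner product with $F(\Y^\ell;T_\ell)$ yields
\begin{equation*}
\langle F(\Y^\ell;T_\ell),\,\W^\ell \bd^\ell \rangle \;=\; -\|F(\Y^\ell;T_\ell)\|^2 \;=\; -2\Psi(\by^\ell;T_\ell).
\end{equation*}
Hence the Armijo condition \eqref{line-search-step} at the candidate $t_\ell=0$ reduces to $\Psi(\by^\ell + \bd^\ell;T_\ell) \leq (1-2\sigma)\,\Psi(\by^\ell;T_\ell)$. On the other hand, since $\bz = \by^\ell + \bd^\ell$ is exactly the vectorization of $\Z = \Y^\ell - (\W^\ell)^{-1} F(\Y^\ell;T_\ell)$, Corollary~\ref{corollary: descent property of F} gives $\|F(\Z;T_\ell)\| \leq \epsilon \|F(\Y^\ell;T_\ell)\|$, so $\Psi(\by^\ell + \bd^\ell;T_\ell) \leq \epsilon^2\,\Psi(\by^\ell;T_\ell) \leq (1-2\sigma)\,\Psi(\by^\ell;T_\ell)$, and the unit step is accepted. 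This shows $t_\ell = 0$ and $\by^{\ell+1} = \by^\ell + \bd^\ell$, i.e., a full Newton step.

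Once full steps are established, the quadratic rate is almost immediate: the chain of inequalities in \eqref{eq: Locally quadratic convergence} inside the proof of Corollary~\ref{corollary: descent property of F}, applied with $\Y = \Y^\ell$ and $\Z = \Y^{\ell+1}$, gives
\begin{equation*}
\|\Y^{\ell+1} - \Y^*\| \;\leq\; \|\W^{-1}\|\,\bigl\|F(\Y^\ell;T_\ell) - F(\Y^*;T_\ell) - \W(\by^\ell - \by^*)\bigr\| \;=\; O\!\left(\|\Y^\ell - \Y^*\|^2\right),
\end{equation*}
where we use the uniform bound on $\|\W^{-1}\|$ from Theorem~\ref{thm_locally CD-regularity}, the strong semismoothness of $F(\cdot;T_\ell)$ at $\Y^*$, and the fact from Lemma~\ref{lem2} that $F(\Y^*;T_\ell) = 0$ for every $T_\ell \in \mathbb{T}_\tau(\Y^\ell)$. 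An induction argument then confirms $\Y^\ell\in\N_\epsilon$ for all $\ell$ large enough, so the argument persists along the full sequence.

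The main obstacle is the moving index set: the standard semismooth Newton proof treats a fixed equation $F(\cdot)=0$, whereas here the equation $F(\cdot;T_\ell)=0$ changes with $\ell$. This is precisely what Lemma~\ref{lem2} has been designed to overcome — it ensures $\mathbb{T}_\tau(\Y^\ell) \subseteq \mathbb{T}_\tau(\Y^*)$ and $F(\Y^*;T_\ell)=0$ uniformly in $T_\ell$, so the Taylor-type expansion of $F(\cdot;T_\ell)$ around $\Y^*$ remains valid with the \emph{same} root $\Y^*$. Together with the uniform invertibility statement \eqref{thm_locally CD-regularity-equ}, this keeps all constants in the quadratic bound independent of which $T_\ell$ is selected, which is what makes the two-step argument go through.
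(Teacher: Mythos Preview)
Your proposal is correct and follows essentially the same route as the paper's proof: choose $\epsilon$ with $\epsilon^2\le 1-2\sigma$, invoke Theorem~\ref{thm_locally CD-regularity} to guarantee solvability of \eqref{eq: newton step1}, use $\langle F,\W\bd\rangle=-2\Psi$ to reduce the Armijo test at $t_\ell=0$ to $\Psi(\by^\ell+\bd^\ell;T_\ell)\le(1-2\sigma)\Psi(\by^\ell;T_\ell)$, verify this via Corollary~\ref{corollary: descent property of F}, and then read off the quadratic rate from \eqref{eq: Locally quadratic convergence}. Your explicit remarks on the induction keeping the iterates in $\N_\epsilon$ and on Lemma~\ref{lem2} absorbing the moving index set $T_\ell$ are exactly the points the paper leans on implicitly.
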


\begin{proof}
	Let $\N$ be a sufficiently small neighbourhood  of $\Y^*$ and $\{\Y^{\ell}\}$ be the sequence generated by Algorithm \ref{Alg1:SMN} in $\N$. According to \eqref{x-ell+1-feasible}, $\bx^{\ell+1}\in\mathbb{S}$ and thus $\Y^{\ell}\in\E \cap\N$. By Lemma \ref{lem2}, 
 we have $T_{\ell} \in \mathbb{T}_{\tau}(\Y^{\ell}) \subseteq \mathbb{T}_{\tau}\left(\Y^*\right)$ . Recalling Theorem \ref{thm_locally CD-regularity}, any $\W^{\ell} \in \partial F(\Y^{\ell};T_{\ell})$ is nonsingular and $\|(\W^{\ell})^{-1}\|$ is bounded. Consequently, equations  \eqref{Wd=FYT} or \eqref{eq: newton step1} are solvable. Now, we prove that the full Newton steps always admit, namely $\rho^{t_\ell}=1$. Let $\bz^{\ell} := \by^{\ell} + \bd^{\ell}(1) =\by^{\ell} + \bd^{\ell}$ be the vectorization of $\Z^\ell\in\E$. Given $\sigma\in(0,1/2)$, it follows from Corollary \ref{corollary: descent property of F} that 
     $$
     \left\| F ( \Z^{\ell};T_{\ell} ) \right\| \leq  \sqrt{1-2\sigma} \left\| F ( \Y^{\ell};T_{\ell}  ) \right\|,
     $$
which immediately delivers
     $$
     \begin{aligned}
             \Psi (\bz^{\ell};T_{\ell})   = \frac{1}{2}  \| F(\Z^{\ell};T_{\ell} ) \|^2   \leq  \frac{1-2\sigma}{2}\| F(\Y^{\ell};T_{\ell}) \|^2 = (1 - 2\sigma)\Psi (\by^{\ell};T_{\ell}).
    \end{aligned}
     $$
This condition indicates that
     $$
        \Psi (\by^{\ell}+\bd^{\ell};T_{\ell}) - \Psi (\by^{\ell};T_{\ell}) \leq  -2\sigma  \Psi (\by^{\ell};T_{\ell}) = -\sigma \langle F(\Y^{\ell};T_{\ell}), \W^{\ell}\bd^{\ell}\rangle,
     $$
which implies that condition \eqref{line-search-step} holds with a unit step length. Finally, the same reasoning to  show \eqref{eq: Locally quadratic convergence} enables us to obtain $
     \| \Y^{{\ell}+1} - \Y^*\| = O( \| \Y^{\ell}-\Y^* \|^2),
     $
     a quadratic convergence rate.
\end{proof}
\begin{remark} Regarding Algorithm \ref{Alg1:SMN}, some comments are given as follows. 
\begin{itemize}[leftmargin=12pt]
\item Despite the locally quadratic convergence rate attained in Theorem \ref{thm_convergen rate}, the requirement  for the sequence to be close to $\Y^*$ implies that Algorithm \ref{Alg1:SMN} is inherently a local method. However, to the best of our knowledge, there are currently no results establishing global convergence for Newton-type methods when directly applied to sparsity-constrained optimization with additional equality or inequality constraints.  The main difficulty in proving global convergence lies in showing the descent property of a suitable merit function. This difficulty arises because support set $T_{\ell}$ changes at every step and is difficult to identify. At present, the best result available is also the local quadratic convergence if the initial point is  sufficiently close to a stationary point \cite{zhao2022}. 

\item At each iteration of Algorithm \ref{Alg1:SMN}, a line search  mechanism (\ref{line-search-step}) is integrated. However, this scheme is restrained on sub-set $\{\bx: \supp(\bx)\subseteq T_{\ell}\}$ rather than entire sparse set $\{\bx: \|\bx\|_0\leq s\}$, and thus it differs from the commonly used globalization mechanisms. The primary purpose of incorporating (\ref{line-search-step}) is to improve the robustness and stability of the algorithm across different applications. A possible direction toward establishing global convergence is to redesign the line search mechanism as
	\begin{equation*}
	\Psi \left( \by^{\ell}+ \bd^{\ell}(\rho^{t_{\ell}});T_{\ell+1}\right)\leq  \Psi \left( \by^{\ell} ;~T_{\ell} \right)+\sigma \rho ^{t_{\ell}} \left\langle F(\Y^{\ell};~T_{\ell}), {\bf W}^{\ell} \bd^{\ell}\right\rangle.
	\end{equation*}
	The only difference between the above scheme and (\ref{line-search-step}) lies in whether the left-hand side involves set $T_{\ell+1}$ or set $T_{\ell}$.  However, the existence of the step size remains considerably challenging if adopting the above strategy. Nevertheless, this is worthy of future research.
\end{itemize}
\end{remark} 
\section{Numerical Experiments}
\label{Numerical experiments}
In this section, we conduct extensive numerical experiments on sparse recovery problems, sparse canonical correlation analysis, and sparse portfolio selection to showcase the performance of {SNSQP}. All experiments are implemented in MATLAB R2021a (available at \url{https://sparseopt.github.io/SQCQP/}), running on a laptop computer of 16GB memory and Inter(R) Core(TM) i7 2.4Ghz CPU. In our numerical experiments, hyperparameters for {SNSQP} are set as follows: $\rho=0.5$, $\sigma=0.5$, $\nu_i^0 = 0, \mu^0_i=0.01$, and $\lambda_i^0=0.01$ for each $i$. The choices of $\tau$ and initial point $\bx^0$ vary depending on the specific problems.

\subsection{Recovery problems}
We first demonstrate the performance of {SNSQP} for solving two recovery problems using synthetic datasets.  The first example does not include the quadratic constraints, described as follows.
\begin{example}    \label{example: recovery1}
Let $f_0\left( \bx \right) =\frac{1}{2}\left\| \D\bx-\bd \right\| ^2$ and $k=0$ (namely, no quadratic constraints), where $\D\in\mathbb{R}^{d\times n}$ and $\bd\in\mathbb{R}^{d}$. Let the linear inequality constraint and $\X$ be $\langle \bx,\bm{1}\rangle  =1$ and $\bx \geq 0$, where ${{\bf 1}:=(1,1,\ldots,1)^\top}$.	
Suppose a ground truth solution $\bx^*\in\mathbb{S}$ is given as follows. Its $s$ nonzero entries are randomly generated from the uniform distribution $U[0,1]$, and then let $\bx^*=\bx^*/\|\bx^*\|_1$. Entries of $\D$ are generated from standard normal distribution $\mathcal{N}(0,1)$,  and then let $\D=\D/\sqrt{d}$. Finally, let
	$\bd=\D\bx^*+\text{nf}\cdot\boldsymbol{\varepsilon}$, where nf is the noise ratio and $\boldsymbol{\varepsilon}$ is the noise. In addition, to control the noise level, we define the signal-to-ratio (SNR) \cite{xiao2022geometric} of $\bd$ as
	$$
	\mathrm{SNR} = 10 \log_{10}  {\|\D \bx^{*}\|^{2}}/{\|\boldsymbol{\varepsilon}\|^{2}}.
	$$
To assess performance, we denote by $\bx$ the solution computed by a given algorithm, and report the computational time in seconds and the reconstruction SNR (RSNR)  defined as
$$
\mathrm{RSNR} = 10 \log_{10} {\|\bx^{*}\|^{2}}/{\|\bx-\bx^*\|^{2}}.
$$
\end{example}
The second example is similar to Example \ref{example: recovery1} but is involved quadratic constraints and more complicated linear inequalities, as outlined below. 
\begin{example}  \label{example: recovery2}
	Let $f_0\left( \bx \right) =\frac{1}{2}\left\| \D\bx-\bd \right\| ^2$ and $\Q_i=\mathbf{P}_i^\top\mathbf{P}_i+0.01\I$ for $i\in[k]$, where $\D\in\mathbb{R}^{d\times n}$,  $\bd\in\mathbb{R}^{d}$,  and $\mathbf{P}_i\in\mathbb{R}^{n\times n}$. Set $\X$ is chosen as $\R^n$, $[-2,2]^n$, or $[0,\infty)^n$.  The $s$ nonzero entries of  ground truth solution $\bx^*\in\mathbb{S}$ are randomly generated from $\X$. Then $\bd=\D\bx^*$ and each entry of $\D$, $\mathbf{P}_i, \bq_i$, and $\A$  is generated from standard normal distribution $\mathcal{N}(0,1)$. To generate $c_i$ and  $\bb$, we randomly select $T_1\subseteq[k]$ with $|T_1|=\lceil k/2\rceil$ and $T_2\subseteq[m]$ with $|T_2|=\lceil m/2 \rceil$, where $\lceil t\rceil$ is the ceiling of $t$. Finally, define
	\begin{align*} 
		c_i =\begin{cases}
			- \frac{1}{2}\langle {\bx^*}, \Q_i{\bx^*} \rangle-\langle \bq_{i}, {\bx^*}\rangle - \zeta_i, &  \text{if}~i\in T_1,\\
			- \frac{1}{2}\langle {\bx^*}, \Q_i{\bx^*} \rangle-\langle \bq_{i}, {\bx^*}\rangle,  & \text{if}~i\notin T_1, \\
		\end{cases} \qquad
		b_i =\begin{cases}
			\langle \ba_{i}, \bx^*\rangle -\xi_i ,  &  \text{if}~i\in T_2,\\
			\langle \ba_{i}, \bx^*\rangle,     &  \text{if}~i\notin T_2,\\
		\end{cases}
	\end{align*}
	where both $\zeta_i$ and $\xi_i$ are uniform random variables from $[0,1]$.	
\end{example}
For Example \ref{example: recovery2}, we report the relative error (${\rm Relerr}:= \| \bx-\bx^* \|/\| \bx^*\|$), the objective function value (Fval), and the computational time in seconds to evaluate the performance of different benchmark algorithms,  where $\bx$ is the solution generated by one algorithm.

\begin{table}[!th]
	\renewcommand{\arraystretch}{1.0}\addtolength{\tabcolsep}{0.5pt}
	\centering 
	\caption{Effects of initial points for Example \ref{example: recovery2} with $\X=\mathbb{R}^n$.}  
	\begin{tabular}{ccccccccc} 
		\toprule 
		\multirow{2}{*}{Initials} & \multicolumn{4}{c}{Relerr} & \multicolumn{4}{c}{Fval}  \\ 
		\cmidrule(lr){2-5} \cmidrule(lr){6-9}  
		&  {min} & {median} & {max} & variance &   {min} & {median} & {max} & variance \\ 
		\midrule  
		$U[0,1]$ 	 &  {2.82e-16} & 4.57e-16 & 5.98e-16 &  {2.99e-33} & 0.00e-00 & 1.78e-15 & 4.44e-15 & 1.25e-30  \\
		
		$\mathcal{N}(0,1)$ 	 &  {3.44e-16} & 5.61e-16 & 7.56e-16 &  {1.07e-32} & 0.00e-00 & 8.88e-16 & 2.66e-15 & 7.26e-31  \\
		
		Weibull &	   {3.25e-16} & 5.27e-16 & 6.47e-16 &  {6.21e-33} & 0.00e-00 & 1.78e-15 & 3.55e-15 & 4.97e-31   \\
		
		$t_{10}$ 	  &  {3.38e-16} & 5.41e-16 & 6.08e-16 &  {4.44e-33} & 0.00e-00 & 1.78e-15 & 3.55e-15 & 1.31e-30   \\
		
		Sparse 	  &  {3.29e-16} & 3.61e-16 & 5.65e-16 &  {3.46e-33} & 0.00e-00 & 1.78e-15 & 3.55e-15 & 1.34e-30   \\ 
		\midrule 
		Overall 	  &  {2.82e-16} & 4.89e-16 & 7.56e-16 &  {9.39e-33} & 0.00e-00 & 1.78e-15 & 4.44e-15 & 1.11e-30   \\ 
		\bottomrule 
	\end{tabular}
	\label{table: recovery-eff-init}  
\end{table}
\subsubsection{Effect of initial points}\label{subsub-com-starting}
We apply {SNSQP} to solve Example \ref{example: recovery2} by setting $\tau=3$,  the maximum number of iterations to be $10^4$, and the tolerance as $\varepsilon=10^{-8}$.  To see the effect of the initial points, we fix $(n,d,k,m,s) = (1000,1000,1,1,50)$ and consider five types of starting points. The first four types of starting points are generated using dense vectors whose entries are generated from four different distributions: the uniform distribution $U[0,1]$, the standard normal distribution $\mathcal{N}(0,1)$, the Weibull distribution $(\lambda = 2, b = 1.5)$, and the $t$-distribution with 10 degrees of freedom. The last type of starting points are generated using sparse vectors constructed as follows: Randomly select $s$ indices to form $\Gamma$ and set $x_i^0=0.1$ if $i\in \Gamma$ and $x_i^0=0$ otherwise.  

For each type of initial points, we run 50 independent trials and report the best, median, and worst results of relative error (Relerr) and objective function values (Fval). We also record these metrics for all 250 trials. As presented in Table \ref{table: recovery-eff-init}, the results show that SNSQP enables exact recovery, as evidenced by the negligible Relerr and Fval values. Furthermore, the variance across all cases is nearly zero, indicating that SNSQP is robust to the choice of initial points for solving Example \ref{example: recovery2}. In general, sparse initialization yields slightly better performance than the other four types, and we therefore adopt it in the subsequent numerical experiments on recovery problems. 
\begin{figure}[!b]
	\centering	
	\begin{subfigure}[b]{0.5\textwidth}
		\centering
		\includegraphics[width=1\textwidth]{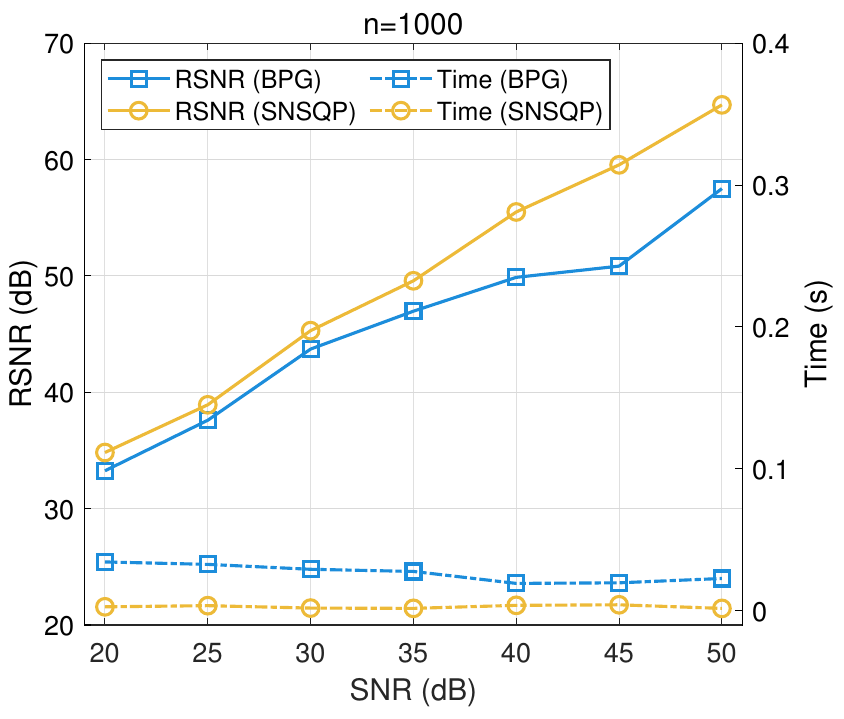}
	\end{subfigure}~~~
	\begin{subfigure}[b]{0.5\textwidth}
		\centering
		\includegraphics[width=1\textwidth]{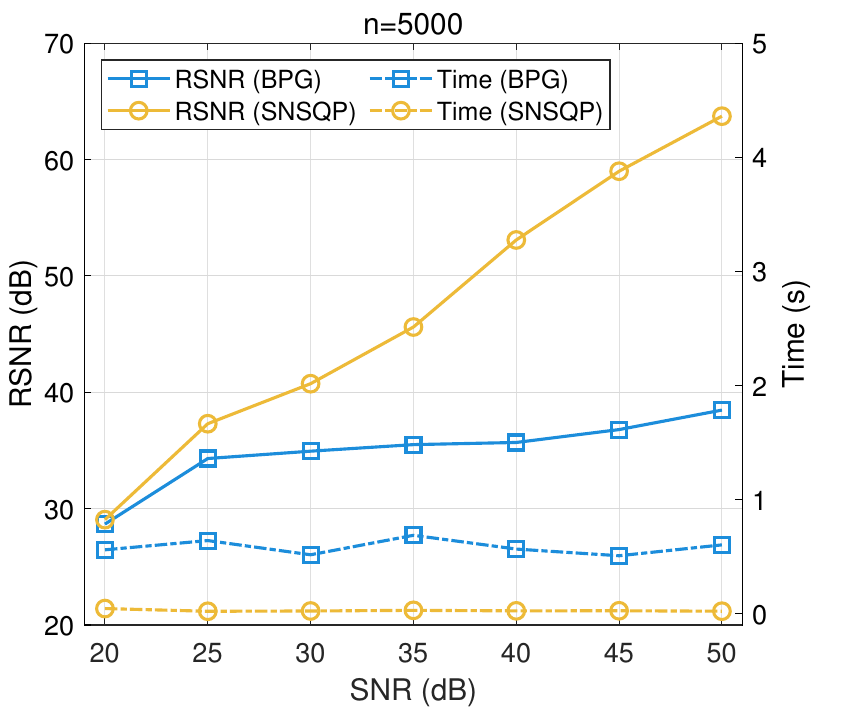}
	\end{subfigure}		
	\caption{Comparison of SNSQP and BPG for Example \ref{example: recovery1}.}
	\label{fig:result-example-5.1}
\end{figure}

\subsubsection{Comparison with a first-order method}
In this part, we compare SNSQP with a proximal gradient algorithm based on the Bregman distance (BPG \cite{pan2025efficient}), which is first-order method, to solve Example \ref{example: recovery1}. Parameters for two algorithms are configured as follows. For SNSQP, we   
set ${\tau=1}$ if ${n\leq1000}$, and ${\tau=0.1}$ otherwise. The maximum number of iterations and the tolerance are set as $10^4$ and $\varepsilon=10^{-8}$. For BPG, we set $L = \max_{i,j} \left| (\D^{\top} \D)_{ij} \right|$, $\lambda=0.005$ if $n=1000$ and $\lambda=0.0005$ if $n=5000$, and $\varepsilon_{1} = \varepsilon_{2} = 10^{-6}$. 

We fix $(d,s)=(0.25n,0.01n)$ but increase SNR over range $\{20,25,\ldots,50\}$, where $n$ is chosen from $\{1000,5000\}$. For each case of $(d,s,n,\text{SNR})$, we run 20 trials and report the median results. As shown in Figure \ref{fig:result-example-5.1}, RSNR generally improves with increasing SNR, and SNSQP achieves higher RSNR in every scenario, indicating higher recovery accuracy. In terms of computational time, SNSQP is faster than BPG in all scenarios.

\subsubsection{Comparison with two solvers for SQCQP relaxations }\label{subsub:com-relax}
We compare {SNSQP} with two QCQP solvers: {CPLEX} and {CVX}, to solve  the SQCQP relaxations by removing the sparsity constraints in SQCQP.  In Example \ref{example: recovery2}, for each $n\in\{1000,2000,\ldots,5000\}$, we set ${d=n+5}$, ${k=m=0.001n}$, ${s=0.01n}$ or $0.05n$ and run 20 independent trials for each $(n,d,k,m,s)$. The median of the 20 trials is presented in Table \ref{table: recovery-QCQP}, where `$-$' indicates cases where the computational time exceeds one hour. The results show that {SNSQP} consistently achieves the lowest Relerr and Fval across all cases, with an accuracy on the order of at least  $10^{-15}$, indicating that it finds the optimal solution. Regarding computation time, {SNSQP} is the fastest. As $n$ increases, the time required by {CPLEX} and {CVX} grows dramatically. For instance, when ${\X = \mathbb{R}^n}$, ${n=5000}$, and ${s=0.01n}$, {CPLEX} exceeds one hour, {CVX} requires more than $1200$ seconds, whereas {SNSQP} consumes $0.127$ seconds, a significant improvement.

\begin{table}[!p] 
	\renewcommand{\arraystretch}{1.085}\addtolength{\tabcolsep}{-2pt}
	\centering 
	\caption{Numerical comparison with QCQP solvers for Example \ref{example: recovery2}.}  
	\begin{tabular}{lcccccccccc} 
		\toprule 
	 &   & \multicolumn{3}{c}{ Relerr} & \multicolumn{3}{c}{Fval} & \multicolumn{3}{c}{Time(s)} \\ 
		\cmidrule(lr){3-5} \cmidrule(lr){6-8} \cmidrule(lr){9-11} 
	$s$	& $n$ & {SNSQP} & {CPLEX} & {CVX} & {SNSQP} & {CPLEX} & {CVX} & {SNSQP} & {CPLEX} & {CVX} \\ 
		\midrule 
		&\multicolumn{10}{c}{$\X=\mathbb{R}^n$} \\ \cline{2-11}
		\multirow{5}{*}{$0.01n$}  
		& 1000 &  {7.97e-16} & 6.57e-02 & 1.97e-02 &  {8.32e-17} & 6.30e-08 & 7.34e-09 &  {0.010} & 1.893 & 4.574 \\
		& 2000 &  {3.07e-16} & 6.86e-02 & 2.61e-02 &  {7.94e-16} & 1.15e-08 & 1.06e-08 &  {0.017} & 36.51 & 52.96 \\
		& 3000 &  {3.75e-16} & 4.15e-02 & 1.57e-02 &  {8.87e-16} & 7.63e-08 & 4.02e-08 &  {0.047} & 442.84 & 200.54 \\
		& 4000 &  {2.25e-16} & 3.22e-02 & 1.08e-02 &  {6.46e-16} & 5.21e-07 & 2.74e-08 &  {0.079} & 1522.3 & 602.20 \\
		& 5000 &  {5.59e-16} & $-$ & 1.41e-02 &  {7.20e-16} & $-$ & 1.99e-08 &  {0.127} & $-$ & 1249.6 \\\midrule

		\multirow{5}{*}{$0.05n$}  
		& 1000 &  {4.31e-16} & 1.76e-02 & 1.46e-02 &  {6.66e-16} & 5.50e-08 & 1.32e-08 &  {0.019} & 2.046 & 4.695 \\
		& 2000 &  {4.45e-16} & 4.48e-02 & 2.44e-02 &  {8.88e-16} & 1.25e-07 & 4.25e-08 &  {0.036} & 35.502& 51.72 \\
		& 3000 &  {5.12e-16} & 1.12e-02 & 2.36e-02 &  {8.02e-16} & 5.21e-07 & 2.88e-08 &  {0.049} & 469.40& 215.55 \\
		& 4000 &  {5.00e-16} & 5.33e-02 & 1.52e-02 &  {6.55e-16} & 7.23e-07 & 2.46e-08 &  {0.198} & 1421.1 & 705.27 \\
		& 5000 &  {6.21e-16} & $-$ & 1.52e-02 &  {7.79e-16} & $-$ & 2.33e-08 &  {0.402} & $-$ & 1201.1 \\\midrule 
		
		&\multicolumn{10}{c}{$\X=[-2,2]^n$} \\ \cline{2-11}    
		\multirow{5}{*}{$0.01n$}   
		& 1000 &  {1.98e-16} & 8.67e-02 & 2.34e-04 &  {8.21e-16} & 6.81e-06 & 3.85e-12 &  {0.012} & 2.490 & 5.495 \\
		& 2000 &  {2.22e-16} & 1.69e-01 & 4.21e-04 &  {7.15e-15} & 5.25e-05 & 1.71e-09 &  {0.020} & 33.51 & 64.48 \\
		& 3000 &  {5.16e-16} & 1.36e-01 & 7.24e-04 &  {8.46e-16} & 4.38e-06 & 8.66e-12 &  {0.031} & 279.17 & 191.43 \\
		& 4000 &  {3.27e-16} & 1.02e-01 & 1.06e-03 &  {8.57e-16} & 7.99e-06 & 1.03e-11 &  {0.079} & 1554.5 & 614.24 \\
		& 5000 &  {4.47e-16} & $-$& 1.11e-03 &  {6.35e-16} & $-$ & 7.54e-12 &  {0.131} & $-$ & 1251.2 \\\midrule  
		
		\multirow{5}{*}{$0.05n$}   
		& 1000 &  {4.29e-16} & 8.03e-02 & 3.04e-04 &  {2.22e-16} & 3.87e-06 & 5.16e-12 &  {0.017} & 3.048 & 6.199 \\
		& 2000 &  {3.56e-15} & 1.43e-01 & 6.57e-04 &  {6.33e-15} & 3.14e-05 & 7.69e-10 &  {0.051} & 34.45 & 66.87 \\
		& 3000 &  {5.33e-16} & 1.13e-01 & 3.90e-03 &  {7.02e-16} & 5.79e-06 & 5.03e-12 &  {0.085} & 302.34 & 225.15 \\
		& 4000 &  {6.25e-16} & 1.21e-01 & 1.53e-03 &  {6.73e-16} & 1.63e-05 & 4.12e-12 &  {0.214} & 1513.4 & 722.78 \\
		& 5000 &  {4.11e-16} & $-$ & 2.01e-03 &  {6.25e-16} & $-$ & 3.86e-12 &  {0.464} & $-$ & 1267.3 \\\midrule  
		
		&\multicolumn{10}{c}{$\X=[0,\infty)^n$} \\ \cline{2-11}
		\multirow{5}{*}{$0.01n$}  
		& 1000 &  {5.27e-16} & 3.80e-02 & 2.00e-04 &  {1.11e-16} & 2.83e-06 & 4.31e-09 &  {0.014} & 2.347 & 8.527 \\
		& 2000 &  {1.31e-16} & 2.23e-02 & 1.69e-04 &  {3.65e-16} & 1.03e-06 & 4.48e-09 &  {0.023} & 34.68 & 54.51 \\
		& 3000 &  {3.34e-16} & 2.11e-02 & 2.06e-04 &  {4.56e-16} & 6.81e-06 & 5.76e-09 &  {0.051} & 217.73 & 258.91 \\
		& 4000 &  {3.92e-16} & 1.12e-02 & 1.86e-04 &  {5.69e-16} & 7.63e-06 & 1.36e-09 &  {0.091} & 1460.0 & 751.15 \\
		& 5000 &  {1.06e-15} & $-$ & 1.79e-03 &  {1.21e-16} & $-$ & 5.90e-09 &  {0.301} & $-$ & 1631.5 \\
		\midrule  
		\multirow{5}{*}{$0.05n$}  
		& 1000 &  {4.48e-16} & 4.40e-02 & 1.81e-04 &  {4.44e-16} & 7.40e-06 & 8.18e-09 &  {0.023} & 3.464 & 8.414 \\
		& 2000 &  {5.53e-16} & 6.70e-02 & 1.49e-02 &  {1.35e-16} & 6.81e-06 & 2.56e-09 &  {0.063} & 32.85 & 55.28 \\
		& 3000 &  {5.79e-16} & 1.19e-02 & 1.59e-02 &  {5.32e-16} & 2.36e-06 & 4.55e-09 &  {0.248} & 239.25 & 275.00 \\
		& 4000 &  {6.75e-16} & 1.07e-02 & 1.38e-04 &  {1.25e-15} & 7.69e-06 & 1.86e-09 &  {1.201} & 1500.2 & 775.68 \\
		& 5000 &  {7.82e-16} & $-$ & 1.82e-04 &  {1.86e-16} & $-$ & 1.33e-09 &  {1.516} & $-$ & 1816.8 \\
		\bottomrule 
	\end{tabular}
	\label{table: recovery-QCQP}  
\end{table}

\subsubsection{Comparison with two solvers for SQCQP reformulations}\label{subsub:com-ref}
We compare {SNSQP} with GUROBI and {CVX} to solve the SQCQP reformulations. The former is adopted to address the following MIP,
\begin{equation}
	\label{eq: MIP of SCQP}
	\underset{\bx,\bw}{\min}~f_0(\bx),~~
	{\rm s.t.}~~   \bx\in \F\cap \X,~ \langle {\bf 1}, \bw\rangle  \leq  s,~  |x_i| \leq  M w_i,~w_i\in  \{ 0,1 \}, ~i\in[n],
\end{equation}
where ${M > 0}$ is a large enough constant, which is set as ${M=10}$ in this example.  While CVX is used to solve the $\ell_1$-norm reformulation of SNSQP. Specifically, we replace the sparsity constraint by $\|\bx\|_1\leq\|\bx^*\|_1$, yielding a convex relaxation of SNSQP, which is then solved by CVX. In Example \ref{example: recovery2}, we focus on $\X=\mathbb{R}^n$ and choose $n\in\{ 100,200,\ldots, 1000\}$. Other  dimensions $(d,k,m,s)$ are set similarly to those in Section \ref{subsub:com-relax}. The median results of 20 independent trials are presented in Table  \ref{table: recovery-MIP}. One can observe that {SNSQP} achieves the lowest Relerr, Fval, and the shortest computational time once again.

\begin{table}[!t] 
	\renewcommand{\arraystretch}{1.0}\addtolength{\tabcolsep}{-3pt}
	\centering 
	\caption{Numerical comparison of SNSQP with GUROBI and CVX for Example \ref{example: recovery2} with $\X=\mathbb{R}^n$.} 
	\begin{tabular}{llccccccccc}  
		\toprule  
			&  &   \multicolumn{3}{c}{Relerr} & \multicolumn{3}{c}{Fval} & \multicolumn{3}{c}{Time(s)} \\
		\cmidrule(lr){3-5} \cmidrule(lr){6-8} \cmidrule(lr){9-11}  
	$s$	& $n$  & {SNSQP} & {GUROBI} & {CVX} & {SNSQP} & {GUROBI} & {CVX} & {SNSQP} & {GUROBI} & {CVX} \\
		\midrule  
		\multirow{5}{*}{$0.01n$}  
		& 100  &  {8.63e-13} & 1.30e-03 & 6.94e-05 &  {2.08e-18} & 8.18e-08 & 8.45e-10 &  {0.002} & 0.617 & 0.529 \\
		& 300  &  {1.76e-14} & 2.73e-04 & 5.55e-05 &  {1.38e-17} & 2.93e-08 & 1.57e-09 &  {0.003} & 3.217 & 1.124 \\
		& 500  &  {6.64e-15} & 3.37e-04 & 4.78e-05 &  {5.55e-17} & 1.92e-08 & 1.64e-09 &  {0.006} & 8.490 & 2.462 \\
		& 700  &  {3.36e-16} & 6.45e-04 & 4.57e-05 &  {5.59e-18} & 1.89e-08 & 2.18e-09 &  {0.009} & 22.66 & 4.721 \\
		& 1000 &  {2.67e-16} & 9.63e-04 & 4.12e-05 &  {1.12e-17} & 2.17e-07 & 2.92e-09 &  {0.010} & 310.6 & 10.65 \\
		\midrule  
		\multirow{5}{*}{$0.05n$}  
		& 100  &  {2.78e-14} & 3.00e-04 & 5.30e-05 &  {5.55e-18} & 2.84e-08 & 2.10e-09 &  {0.003} & 0.886 & 0.470 \\
		& 300  &  {6.25e-14} & 2.86e-04 & 4.60e-05 &  {4.44e-17} & 7.91e-08 & 3.97e-09 &  {0.005} & 24.12 & 1.165 \\
		& 500  &  {2.84e-14} & 1.50e-04 & 4.13e-05 &  {3.33e-17} & 1.05e-08 & 6.32e-09 &  {0.006} & 699.8 & 2.456 \\
		& 700  &  {5.45e-16} & 1.30e-03 & 4.42e-05 &  {3.17e-17} & 1.87e-07 & 7.44e-09 &  {0.012} & 334.4 & 4.631 \\
		& 1000 &  {5.60e-16} & $-$      & 3.79e-05 &  {4.10e-16} & $-$      & 9.59e-09 &  {0.012} & $-$   & 10.53 \\
		\bottomrule  
	\end{tabular}
	\label{table: recovery-MIP}  
\end{table}

\subsubsection{Effect of higher dimensions}
We evaluate the performance of SNSQP in solving Example \ref{example: recovery2} in higher-dimensional settings. A comparison with other solvers is omitted, as Tables \ref{table: recovery-QCQP} and \ref{table: recovery-MIP} indicate that they exhibit significantly higher computational costs for such cases. Table \ref{table: synthetic higher} presents the median results over 10 independent runs, where ${d = n + 5}$ and ${k = m = 5}$.  The results demonstrate that the Relerr and Fval obtained by SNSQP remain stable, consistently ranging in magnitude from $10^{-13}$ to $10^{-15}$, while maintaining efficient computational performance. Specifically, for ${n = 30,000}$, the computational time is 3.251 seconds and 33.59 seconds for $s = \lceil 0.01n \rceil$ and $s = \lceil 0.05n \rceil$.

\begin{table}[!t]
\renewcommand{\arraystretch}{1}\addtolength{\tabcolsep}{9pt}
\centering
\caption{Results by SNSQP for Example \ref{example: recovery2} with $ \X=[-2,2]^n$ in higher dimensions.}
\label{table: synthetic higher}
\begin{tabular}{ccccccr}
\toprule
 & \multicolumn{3}{c}{$s=0.01n$} & \multicolumn{3}{c}{$s=0.05n $} \\
\cmidrule(lr){2-4} \cmidrule(lr){5-7}
$n$  & Relerr & Fval & Time(s) & Relerr & Fval & Time(s) \\
\midrule
10000 & 5.47e-15 & 4.44e-15 & 0.284 & 1.89e-14 & 7.10e-15 & 3.392 \\
15000 & 1.03e-14 & 4.46e-15 & 0.895 & 1.42e-14 & 1.52e-15 & 4.581 \\
20000 & 2.25e-14 & 7.10e-15 & 1.153 & 1.57e-15 & 1.05e-15 & 10.54 \\
25000 & 1.82e-13 & 2.84e-14 & 2.741 & 2.13e-14 & 5.46e-15 & 15.82 \\
30000 & 8.03e-14 & 7.10e-15 & 3.251 & 5.68e-14 & 1.99e-15 & 33.59 \\
\bottomrule
\end{tabular}
\end{table}

\subsection{Sparse canonical correlation analysis}
The sparse canonical correlation analysis (SCCA) problem takes the form of:
\begin{equation}
\label{eq: SCCA1}
        \begin{aligned}
		\max ~& \langle \bw^x, \boldsymbol{\Sigma}^{xy} \bw^y\rangle \\
		\text{s.t.} ~&  \langle \bw^x, \boldsymbol{\Sigma}^{xx} \bw^x\rangle \leq  1,~\|\bw^x\|_0 \leq  s_x,\\ 
                            & \langle \bw^y, \boldsymbol{\Sigma}^{yy} \bw^y\rangle \leq  1,~\|\bw^y\|_0 \leq  s_y, \\           
	\end{aligned}\tag{SCCA}
\end{equation}
where $\boldsymbol{\Sigma}^{xy}\neq0$ is the cross-covariance matrix between two given data matrices ${\bf X}$ and $\Y$, namely $\boldsymbol{\Sigma}^{xy}={\bf X} \Y^\top$, and $\boldsymbol{\Sigma}_{xx}$ and $\boldsymbol{\Sigma}^{yy}$ represent the covariance matrices for ${\bf X}$ and $\Y$, namely $\boldsymbol{\Sigma}^{xx}={\bf X} {\bf X}^\top$ and $\boldsymbol{\Sigma}^{yy}={\bf Y} \Y^\top$. By letting $s = s_x + s_y$ and
$$
\Q_0=\left[ \begin{matrix}
	0&		\boldsymbol{\Sigma}^{xy}\\
	{\boldsymbol{\Sigma}^{xy}}^\top&		0\\
\end{matrix} \right] ,~~ \Q_1=\left[ \begin{matrix}
	\boldsymbol{\Sigma}^{xx}&		0\\
	0&		\boldsymbol{\Sigma}^{yy}\\
\end{matrix} \right],~~\bx=\left[ \begin{array}{c}
	\bw^x\\
	\bw^y\\
\end{array} \right],
$$ 
 problem \eqref{eq: SCCA2} can be relaxed in the form,
\begin{equation}
\label{eq: SCCA2}
        \begin{aligned}
	 \max~  \langle \bx,  \Q_0 \bx \rangle, ~~\text { s.t. }~   \langle \bx, \Q_1 \bx\rangle  \leq  2, ~\|\bx\|_0 \leq  s.        
	\end{aligned}
\end{equation} 
This is a relaxation of problem \eqref{eq: SCCA1} because its feasible region includes the feasible region of \eqref{eq: SCCA1}. However, the following result shows that the optimal solutions to them have a close relationship. Therefore, we employ SNSQP to solve \eqref{eq: SCCA2},  a special case of \eqref{eq: SCQP}.

\begin{proposition} Any optimal solution $(\bx^x;\bx^y)$ of (\ref{eq: SCCA2}) satisfies $\|\bx^x\|_0 > 0$ and $\|\bx^y\|_0 > 0$. 
\end{proposition}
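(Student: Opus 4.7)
The plan is to argue by contradiction. Suppose some optimal solution $(\bx^x;\bx^y)$ of \eqref{eq: SCCA2} satisfies $\|\bx^x\|_0=0$; the case $\|\bx^y\|_0=0$ is handled symmetrically by the block structure. Since $\langle\bx,\Q_0\bx\rangle = 2\langle\bx^x,\boldsymbol{\Sigma}^{xy}\bx^y\rangle$, such a candidate achieves objective value $0$, so the optimal value of \eqref{eq: SCCA2} would be at most $0$. It therefore suffices to exhibit a feasible two-sparse point at which the objective is strictly positive, contradicting optimality.

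To construct the witness I exploit the hypothesis $\boldsymbol{\Sigma}^{xy}\neq 0$: I pick indices $(i_0,j_0)$ with $c:=\boldsymbol{\Sigma}^{xy}_{i_0j_0}\neq 0$, set $\bw^x := t\,\operatorname{sgn}(c)\,\mathbf{e}_{i_0}$ and $\bw^y := t\,\mathbf{e}_{j_0}$ for a scalar $t>0$ to be fixed, and form $\bx := (\bw^x;\bw^y)$. Writing $a := \boldsymbol{\Sigma}^{xx}_{i_0i_0}$ and $b := \boldsymbol{\Sigma}^{yy}_{j_0j_0}$, a direct block computation gives $\langle\bx,\Q_1\bx\rangle = t^2(a+b)$ and $\langle\bx,\Q_0\bx\rangle = 2t^2|c|$. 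The sparsity constraint $\|\bx\|_0=2\le s$ is satisfied under the standing convention $s=s_x+s_y\ge 2$, and choosing $t:=\sqrt{2/(a+b)}$ makes the quadratic constraint tight while leaving the objective equal to $4|c|/(a+b)>0$, producing the desired contradiction.

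The only step that requires more than routine verification is checking $a+b>0$, which is what ensures that the scaling $t$ is well defined and strictly positive. This is where the identities $\boldsymbol{\Sigma}^{xx}=\mathbf{X}\mathbf{X}^\top$ and $\boldsymbol{\Sigma}^{yy}=\mathbf{Y}\mathbf{Y}^\top$ become essential: if $a$ were zero, then the $i_0$-th row of $\mathbf{X}$ would vanish, forcing the whole $i_0$-th row of $\boldsymbol{\Sigma}^{xy}=\mathbf{X}\mathbf{Y}^\top$ to be zero and contradicting $c\neq 0$; by the same reasoning $b>0$. I expect this PSD-based compatibility between the diagonals of $\boldsymbol{\Sigma}^{xx},\boldsymbol{\Sigma}^{yy}$ and the support of $\boldsymbol{\Sigma}^{xy}$ — rather than any analytical subtlety — to be the only delicate point, as the remainder of the argument is a direct, two-line contradiction.
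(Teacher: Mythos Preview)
Your proposal is correct and follows essentially the same approach as the paper: both construct a two-sparse feasible point supported on indices $(i_0,j_0)$ with $\boldsymbol{\Sigma}^{xy}_{i_0j_0}\neq 0$ to show the optimal value is strictly positive, which rules out any solution with a vanishing block. The only cosmetic differences are that the paper scales the two coordinates separately (by $1/\sqrt{a}$ and $1/\sqrt{b}$) rather than with a common factor $t$, and it includes a separate branch for the case $a=0$ or $b=0$ --- a case you correctly observe cannot occur given the Gram structure $\boldsymbol{\Sigma}^{xx}=\mathbf{X}\mathbf{X}^\top$, $\boldsymbol{\Sigma}^{yy}=\mathbf{Y}\mathbf{Y}^\top$.
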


\begin{proof} Since $\boldsymbol{\Sigma}^{xy}\neq0$, there is $\Sigma^{xy}_{i_0j_0}\neq0$. Consider a point $\bx:=(\bu;\bv)$ such that
\begin{align*}
u_i=\begin{cases}
 \frac{1}{\sqrt{\Sigma^{xx}_{i_0i_0}}} & {\rm if}~  i=i_0,~\Sigma^{xx}_{i_0i_0} \neq 0,\\
1 & {\rm if}~  i=i_0,~\Sigma^{xx}_{i_0i_0} = 0,\\
0 & {\rm if}~  i\neq i_0,
\end{cases}\qquad
v_j=\begin{cases}
 \frac{{\rm sign} \left(\Sigma^{xy}_{i_0j_0}\right)}{\sqrt{\Sigma^{yy}_{j_0j_0}}} & {\rm if}~  j=j_0,~\Sigma^{yy}_{j_0j_0} \neq 0,\\
1 & {\rm if}~  j=j_0,~\Sigma^{yy}_{j_0j_0} = 0,\\
0 & {\rm if}~  j\neq j_0,
\end{cases}
\end{align*}
where ${\rm sign} (t)$ is the sign of $t$. Therefore, $\bx=(\bu;\bv)$ is feasible to problem (\ref{eq: SCCA2}), and meanwhile it satisfies that $\langle \bx,  \Q_0 \bx \rangle>0$. Note that any point $\bx'=(\bu';\bv')$ satisfying $\|\bu'\|_0 = 0$ or $\|\bv'\|_0 = 0$ leads to $\langle \bx',  \Q_0 \bx' \rangle=0$. Hence, $\bx'$ is not the optimal solution, showing the desired result.
\end{proof}
\begin{remark} In Theorem \ref{thm_convergen rate}, the local convergence rate relies on two regularity conditions: the restricted LICQ and the condition (\ref{eq: strong SOC}) hold at the stationary point. However, ondition (\ref{eq: strong SOC}) may not hold for problem (\ref{eq: SCCA2}). To see this, let $\bx^*=(\bx^*_x;\bx^*_y)$ be a P-stationary point and $\mu^*$ be the corresponding Lagrangian multiplier. If (\ref{eq: strong SOC}) holds at $\bx^*$, then  
	 \begin{equation}
	 	\label{eq: SCCA-remark}
	 	\langle \bd, \H^*  \bd \rangle > 0, \forall 0 \ne \bd\in \mathbb{Q}\left( \Y^*;T \right)=\{\bd=(\bd_x;\bd_y):(\boldsymbol{\Sigma}^{xx}\bd_x;\boldsymbol{\Sigma}^{yy}\bd_y)=0\}.
	 \end{equation}
 It is readily observed that any vector $\bd\in\mathbb{Q}\left( \Y^*;T \right)$ satisfies ${\bf X}^\top\bd_x=0, {\bf Y}^\top\bd_y=0$, leading  to
	$$
	\begin{aligned}
		\langle \bd, \H^*  \bd \rangle = & \left\langle \begin{bmatrix} \bd_x \\ \bd_y \end{bmatrix}, 
		\begin{bmatrix} \mu^* \boldsymbol{\Sigma}^{xx} & -\boldsymbol{\Sigma}^{xy} \\ -{\boldsymbol{\Sigma}^{xy}}^\top & \mu^*\boldsymbol{\Sigma}^{yy} \end{bmatrix} 
		\begin{bmatrix} \bd_x \\ \bd_y \end{bmatrix} \right\rangle \\
		& = \mu^*\langle\bd_x, \boldsymbol{\Sigma}^{xx} \bd_x \rangle + \mu^*\langle\bd_y, \boldsymbol{\Sigma}^{yy} \bd_y \rangle - 2\langle\bd_x, \boldsymbol{\Sigma}^{xy} \bd_y \rangle \\
		& = -2\langle\bd_x, \boldsymbol{\Sigma}^{xy} \bd_y \rangle    = -2 \langle {\bf X}^\top\bd_x, {\bf Y}^\top\bd_y \rangle    = 0.
	\end{aligned}
	$$
This contradicts (\ref{eq: SCCA-remark}). However, considering that condition (\ref{eq: strong SOC}) is only a sufficient condition for convergence, we still employ SNSQP to solve this problem.
\end{remark}

\subsubsection{Benchmark methods} Many algorithms have been proposed to address SCCA problems. The majority of these methods are based on relaxations of the $\ell_0$ norm, while a subset employs greedy strategies. We select three representative relaxation-based approaches: {SGEM} \cite{sriperumbudur2011majorization}, {SCCA} \cite{chu2013sparse}, and {SCCAPD} \cite{hardoon2011sparse},  in addition to two greedy algorithms: {SpanCCA} \cite{asteris2016simple} and {SWCCA} \cite{min2018sparse}.  It is important to note that the greedy methods are applicable only in scenarios where the covariance matrices are identity matrices. Consequently, in the subsequent numerical experiments, we specifically set $\boldsymbol{\Sigma}^{xx}={\bf I}$ and $\boldsymbol{\Sigma}^{yy}={\bf I}$ when evaluating {SpanCCA} and {SWCCA}. In addition, we also use GUROBI to solve problem \eqref{eq: SCCA2}, that is, by transforming it into a MIP using the same approach described in Section \ref{subsub:com-ref}. 

The hyperparameters for all algorithms are configured as follows.  
For {SNSQP}, initial point $\bx^0$ is generated using MATLAB built-in function {\tt canoncorr}, while $\tau$ is selected via a grid search over $\{0.001, 0.002, \ldots, 0.01\}$. 
For {SGEM},  the same initial point as in {SNSQP} is used, with parameters set as $\epsilon=10^{-6}$ and $\tau  = \max\{0, -\lambda_{\min}(\Q_0)\}$, where $\lambda_{\min}(\Q_0)$ represents the smallest eigenvalue of $\Q_0$.  
For {SCCA},  maximum number $\ell_{\max}$ of iterations is fixed at 5000, and  $\delta$ is selected from a predefined grid $\{0.1, 0.2, \dots, 1\}$.  
For {SCCAPD}, we use its default settings. 
For {SpanCCA}, we set $\ell_{\max}=5000$ and $r=5$.   
For {SWCCA},  the initial point is the same as in {SNSQP}, and the sparsity level of $\bw$ is set to be $0.6N$. It terminates when either $\ell>5000$ or  $\|\bw^{k+1}-\bw^k \|^2 < 10^{-6}$ is satisfied. For GUROBI, we set the maximum running time to one hour. To evaluate the performance, we report the correlation, sparsity levels $\rho_x$ and $\rho_y$ of $\bw^x$ and $\bw^y$,  violation $\text{VOC}_x$ and $\text{VOC}_y$ of the unit variance constraint, and the computational time, where
\begin{align*}
&\text{Correlation}:=\frac{\langle\bw^x, \Sigma^{xy} \bw^y\rangle}{\sqrt{\langle\bw^x, \Sigma^{xx} \bw^x\rangle\langle\bw^y, \Sigma^{yy} \bw^y\rangle} },\\
&\rho_x:=\frac{n_x-\|\bw^x\|_0}{n_x},\qquad\text{VOC}_x:=\| \langle \bw^x, \boldsymbol{\Sigma}^{xx} \bw^x\rangle- 1\|,\\
&\rho_y:=\frac{n_y-\|\bw^y\|_0}{n_y},\qquad \text{VOC}_y:=\| \langle \bw^y, \boldsymbol{\Sigma}^{yy} \bw^y\rangle- 1\|.
\end{align*}
\subsubsection{Testing examples} 
\begin{example}[\textbf{Synthetic data \cite{chu2013sparse}}]
    \label{example: SCCA1}
    Let $\mathbf{X}\in\mathbb{R}^{n_x \times N}$ and $\mathbf{Y}\in\mathbb{R}^{n_y \times N}$ be generated by 
    \begin{equation}
        \mathbf{X}=\left(({\bf 1};-{\bf 1};0)+\boldsymbol{\epsilon}\right) {\bf{u}}^{\top}, \quad \mathbf{Y}=\left((0;{\bf 1};-{\bf 1})+\boldsymbol{\varepsilon}\right) {\bf{u}}^{\top},
    \end{equation}
where ${\bf 1}\in \mathbb{R}^{n_x/8}$, $\boldsymbol{\epsilon} \in \mathbb{R}^{n_x}$ and $\boldsymbol{\varepsilon} \in \mathbb{R}^{n_y}$ are two noise vectors with  $\epsilon_i\sim \mathcal{N}(0,0.1^2)$ and $\varepsilon_i\sim \mathcal{N}(0,0.1^2)$, and ${\bu  \in \mathbb{R}^{N}}$ is a random vector with $u_i\sim \mathcal{N}(0,1)$. 

\end{example}

\begin{example}[\textbf{Real data}]
    \label{example: SCCA2}  Four real datasets are selected to generate ${\bf{X}}$ and ${\bf{Y}}$. They are {{\tt SRBCT}}, {{\tt lymphoma}}, {{\tt breast cancer}}, and {{\tt glioma}}\footnote{Available at \href{www.ncbi.nlm.nih.gov/geo/query/acc.cgi?acc=GSE2223}{www.ncbi.nlm.nih.gov/geo/query/acc.cgi?acc=GSE2223}.} \cite{bredel2005functional, bredel2005high} with details summarized in in Table \ref{table: SCCA real dataset}.  The sample-wise normalization is conducted on them so that each sample has mean zero and variance one. In addition, {{\tt SRBCT}} and {{\tt lymphoma}} only contain one set of samples, so we manually divide the variables into two equal parts to obtain ${\bf{X}}$ and ${\bf{Y}}$.
\end{example}

\begin{table}[!th]
\renewcommand{\arraystretch}{1.0}\addtolength{\tabcolsep}{5.5pt}
\caption{Descriptions of four real datasets.}
\label{table: SCCA real dataset}
\centering
\begin{tabular}{llcccc}
    \hline
    Datasets & Source & N & $n$ & $n_x$ & $n_y$ \\ \hline
    {\tt SRBCT}      & R package: plsgenomics \cite{plsgenomics} & 82 & 2308 & 1154 & 1154 \\  
    {\tt lymphoma}   & R package: KODAMA \cite{KODAMA} & 61 & 4026 & 2013 & 2013 \\ 
    {\tt breast cancer} & R package: PMA \cite{PMA} & 89 & 21821 & 2149 & 19672 \\ 
    {\tt glioma}     & NCBI: Gene Expression Omnibus & 55 & 62127 & 22962 & 39165 \\ \hline
\end{tabular}
\end{table}

\subsubsection{Numerical comparison} From the construction of $\mathbf{X}$ and $\mathbf{Y}$ in Example \ref{example: SCCA1}, it is evident that the first $n_x/4$ variables of $\mathbf{X}$ exhibit correlation with the last $n_x/4$ variables of $\mathbf{Y}$. An effective sparse CCA algorithm should be capable of computing weight vectors $\bw_x$ and $\bw_y$ that accurately identify these correlated variables. Specifically, the nonzero elements of $\bw_x$ should be restricted to the first $n_x/4$ components, while those of $\bw_y$ should be confined to the last $n_x/4$ components.   As illustrated in Figure \ref{fig: SCCA synthetic data}, the {SNSQP} algorithm successfully achieves this objective.

 \begin{figure}[!t]
    \centering
    \begin{subfigure}[b]{0.49\textwidth}
        \centering
        \includegraphics[width=\textwidth]{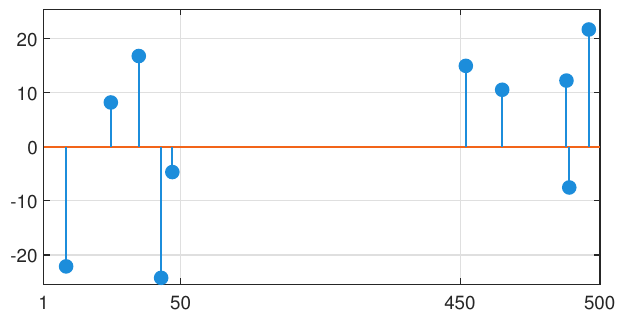}  
        \caption{$n_x=200, n_y=300$}
    \end{subfigure}
    \hfill
    \begin{subfigure}[b]{0.49\textwidth}
        \centering
        \includegraphics[width=\textwidth]{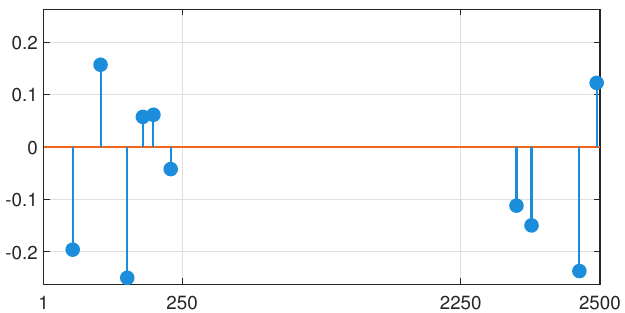}  
        \caption{$n_x=1000, n_y=1500$}
    \end{subfigure}
    
    \caption{Solutions obtained by {SNSQP} for Example \ref{example: SCCA1} with $s=10$.}
    \label{fig: SCCA synthetic data}
\end{figure}

\begin{table}[!t]
\renewcommand{\arraystretch}{.95}\addtolength{\tabcolsep}{3.0pt}
	\centering
	\caption{Comparison with relaxation methods for Example \ref{example: SCCA1}. } 
	\begin{tabular}{llccccrr}
\toprule
     Algs.       &   Para.     & Correlation  & $\rho_x$ & $\rho_y$ & $\text{VOC}_x$ & $\text{VOC}_y$ &Time(s) \\ \hline 
          \multicolumn{8}{c}{$n_x=200$, $n_y=300$} \\ \midrule
 {SNSQP} & $s=5$          & 1.0000      & 99.0\%        & 99.0\%        & 6.43e-15    & 5.99e-15    & 0.006   \\ 
                           & $s=10$         & 1.0000      & 99.0\%        & 97.3\%     & 1.34e-10    & 1.78e-10    & 0.009   \\  
 {SGEM}  & $\rho=0.01$    & 0.9980      & 99.5\%      & 99.7\%      & 1.94e-02    & 1.94e-02    & 18.51  \\ 
                          & $\rho=0.005$   & 1.0000      & 76.0\%      & 84.3\%      & 1.69e-04    & 1.69e-04    & 6.388   \\  
 {SCCA}  & $\mu=5$        & 1.0000      & 99.0\%        & 99.0\%        & 2.35e-05    & 8.64e-04    & 0.120   \\ 
                          & $\mu=10$       & 1.0000      & 99.5\%      & 99.3\%      & 4.65e-05    & 9.17e-04    & 0.207   \\  
 SCCAPD         &               & 1.0000      & 75.0\%      & 61.6\%      & 6.43e-16    & 5.99e-16    & 0.024   \\ 
{GUROBI}         &               & 1.0000      & 99.5\%      & 97.0\%      & 2.90e-03    & 2.90e-03    & 3600   \\ \midrule
\multicolumn{8}{c}{$n_x=1000$, $n_y=1500$} \\ \midrule
 {SNSQP} & $s=5$          & 1.0000      & 99.8\%      & 99.8\%      & 1.61e-15    & 1.62e-15    & 0.005   \\ 
                           & $s=10$         & 1.0000      & 99.7\%      & 99.5\%      & 1.64e-10    & 1.64e-10    & 0.008   \\  
 {SGEM}  & $\rho=0.01$    & 0.9990      & 96.5\%      & 97.5\%      & 2.86e-05    & 2.86e-05    & 166.5 \\ 
                          & $\rho=0.005$   & 1.0000      & 95.6\%      & 96.8\%      & 6.71e-05    & 6.71e-05    & 138.0 \\  
 {SCCA}  & $\mu=5$        & 1.0000      & 99.5\%      & 99.2\%      & 1.01e-10    & 9.90e-04    & 0.419   \\ 
                          & $\mu=10$       & 1.0000      & 99.8\%      & 99.3\%      & 5.09e-13    & 9.90e-04    & 0.700   \\  
SCCAPD         &               & 1.0000      & 92.9\%      & 64.8\%      & 2.22e-16    & 5.99e-16    & 0.035   \\
{GUROBI}         &               & 1.0000      & 99.8\%      & 99.6\%      & 7.92e-03    & 7.92e-03    & 3600   \\
\bottomrule
\end{tabular}
	\label{table: SCCA_synthetic}
\end{table}

 A comprehensive numerical comparison of {SNSQP} with three relaxation algorithms for solving Examples \ref{example: SCCA1} and   \ref{example: SCCA2} is presented in Table \ref{table: SCCA_synthetic} and Table \ref{table: SCCA_real1}. One can observe that {SNSQP} outperforms the others in terms of overall performance, achieving the highest correlation and sparsity levels while consuming the shortest time. Due to the significant computational cost of GUROBI as the problem dimension increases, we exclude it for solving Example \ref{example: SCCA2}, a higher-dimensional case.
 \begin{table}[!t]
\renewcommand{\arraystretch}{.925}\addtolength{\tabcolsep}{2.5pt}
	\centering
	\caption{Comparison with relaxation methods for Example \ref{example: SCCA2}.} \label{table: SCCA_real1}
	\begin{tabular}{llcccccc}
\toprule
 Algs.       &   Para.     & Correlation  & $\rho_x$ & $\rho_y$ & $\text{VOC}_x$ & $\text{VOC}_y$ & Time(s) \\ \hline 
          \multicolumn{8}{c}{{\tt SRBCT}} \\ \midrule
 {SNSQP} & $s=40$          & 0.9968      & 97.83\%      & 98.70\%      & 1.26e-14    & 1.26e-14    & 0.014   \\ 
                          & $s=80$          & 1.0000      & 96.62\%      & 96.45\%      & 4.68e-11    & 4.68e-11    & 0.035   \\  
 {SGEM}  & $\rho=0.01$     & 0.6317      & 1.822\%       & 1.215\%       & 3.10e-03    & 3.10e-03    & 1587\\ 
                          & $\rho=0.005$    & 0.5879      & 0.435\%       & 0.782\%       & 2.61e-03    & 2.61e-03    & 1177\\  
 {SCCA}  & $\mu=5$         & 1.0000      & 86.40\%      & 86.74\%      & 8.11e-04    & 3.51e-04    & 0.842   \\ 
                          & $\mu=10$        & 0.9887      & 91.16\%      & 90.81\%      & 7.23e-02    & 6.45e-03    & 0.826   \\ 
 SCCAPD         &                & 0.9316      & 98.09\%      & 66.20\%      & 3.32e-16          & 2.22e-16    & 0.063   \\ \midrule
\multicolumn{8}{c}{{\tt lymphoma}} \\ \midrule
 {SNSQP} & $s=20$          & 0.9904      & 99.35\%      & 99.65\%      & 5.57e-11    & 5.57e-11    & 0.014   \\ 
                          & $s=50$          & 0.9999      & 98.56\%      & 98.96\%      & 6.94e-11    & 6.94e-11    & 0.034   \\  
 {SCCA}  & $\mu=5$         & 0.9014      & 98.61\%      & 98.56\%      & 1.82e-02    & 2.80e-02    & 2.759   \\ 
                          & $\mu=10$        & 0.4016      & 99.65\%      & 99.60\%      & 1.32e-02    & 2.80e-02    & 3.865   \\  
SCCAPD         &                & 0.9210      & 99.06\%      & 56.38\%      & 4.44e-16    & 4.44e-16    & 0.162   \\\midrule  
\multicolumn{8}{c}{{\tt breast cancer}} \\ \midrule
{SNSQP} & $s=40$          & 0.9941      & 99.58\%      & 99.84\%      & 9.28e-09    & 9.28e-09    & 0.110   \\ 
                        & $s=80$          & 1.0000      & 99.44\%      & 99.76\%      & 3.87e-09    & 3.87e-09    & 0.160  \\  
 {SCCA}  & $\mu=0.5$       & 0.9835      & 95.86\%      & 99.72\%      & 1.34e-03    & 1.55e-02    & 36.27  \\ 
                          & $\mu=1$         & 0.7723      & 96.32\%      & 99.94\%      & 8.04e-03    & 3.95e-01    & 36.15  \\  
SCCAPD         &                & 0.9193      & 99.67\%      & 99.74\%      & 3.33e-16    & 5.55e-16    & 0.164   \\ \midrule
\multicolumn{8}{c}{{\tt glioma}} \\ \midrule
 {SNSQP} & $s=50$          & 1.0000      & 99.92\%      & 99.92\%      & 1.07e-11    & 1.07e-11    & 0.476   \\ 
                          & $s=100$         & 1.0000      & 99.80\%      & 99.86\%      & 3.31e-08    & 3.31e-08    & 0.341   \\  
 {SCCA}  & $\mu=0.2$       & 0.9955      & 99.31\%      & 99.56\%      & 1.48e-02    & 9.83e-03    & 355.4 \\ 
                          & $\mu=0.5$       & 0.8825      & 99.53\%      & 99.78\%      & 9.29e-02    & 2.64e-01    & 353.3 \\  
SCCAPD         &                & 0.9223      & 99.98\%      & 99.84\%      & 2.22e-16    & 1.11e-16    & 3.114   \\ 
\bottomrule
\end{tabular}
\end{table}
 
 We further compare {SNSQP} with two greedy methods, {SpanCCA} and {SWCCA}, to examine the effect of the sparsity level $s$. As shown in Figure \ref{fig: SCCA real data}, the correlation computed by all three methods exhibits an increasing trend as $s$ grows.  {SNSQP} consistently achieves the highest correlation level.  It is noteworthy that {SWCCA} is very fast for this example, as it solves the problem with identity covariance matrices, i.e., $\boldsymbol{\Sigma}{xx}={\bf I}$ and $\boldsymbol{\Sigma}{yy}={\bf I}$, that enable closed-form solutions to subproblems. However, due to this simplification, {SWCCA} fails to yield desirable correlation.
 
 \begin{figure}[!h]
    \centering
        \includegraphics[width=0.46\textwidth]{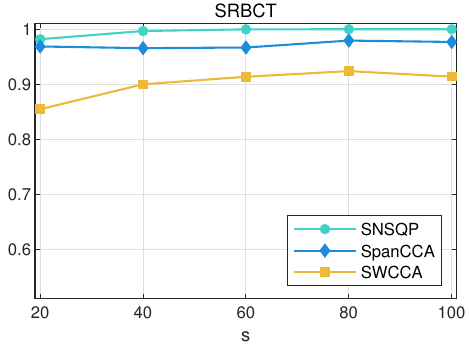}~~~ \qquad  
        \includegraphics[width=0.46\textwidth]{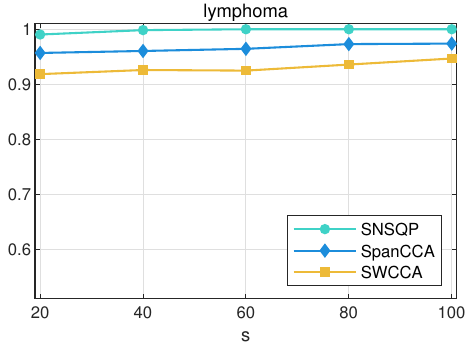} \\[1ex] 
        \includegraphics[width=0.46\textwidth]{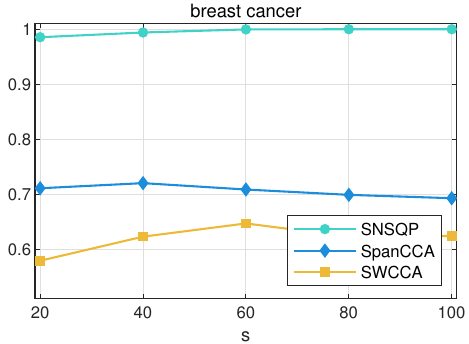}~~~ \qquad  
        \includegraphics[width=0.46\textwidth]{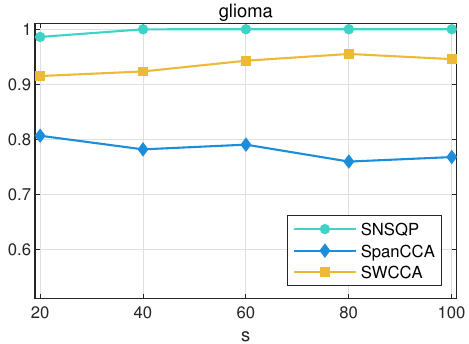} 
    \caption{Effect of sparsity level $s$ for Example \ref{example: SCCA1}.}
    \label{fig: SCCA real data}
\end{figure}

\subsection{Sparse portfolio selection problem}
The sparse portfolio selection (SPS) problem proposed in \cite{cui2013convex} is formulated as follows,
\begin{equation}
	\label{eq: SPS}
	\begin{aligned} 
		\min ~& \langle \bx, (\Q+\Q_1) \bx \rangle\\
		\text { s.t. } ~& \langle \bx, \Q_1 \bx  \rangle \leq  \sigma_0,~\langle \bx, \bm{\ba_1} \rangle \geq  r_0, ~ \langle \bx, \bm{1}\rangle  =1, \\
		&   \|\bx\|_0 \leq  s ,~\bx \in\{0\} \cup\left[\ba, \bb\right],
	\end{aligned} \tag{SPS}
\end{equation}
where $\Q \in \mathbb{R}^{n\times n}$ is a symmetric positive semi-definite matrix, $\D \in \mathbb{R}^{n\times n}$ is a non-negative diagonal matrix, $\ba_1 \in \mathbb{R}^n$ is the return vector, $\sigma_0$ is the prescribed nonsystematic risk level and $r_0$ is the prescribed weekly return level, and $\ba,\bb \in \mathbb{R}^n$ are the lower and upper bounds of $\bx$. In the objection function,  first part $\langle \bx,  \Q \bx \rangle$ is called `systematic risk' and second part $\langle \bx, \Q_1 \bx \rangle$ is called `nonsystematic/specific risk'. In the sequel, we set set  $(\sigma_0,r_0)=(0.001,0.002)$, $\ba=\bm{0}$, and $\bb=(0.3,0.3,\ldots,0.3)^\top$. We note that the formulation in \eqref{eq: SPS} includes an equality constraint. Following the approach in LNA \cite{zhao2022}, we handle this constraint by directly incorporating it into the stationary equations for Newton's method iterations, rather than using the NCP function.

\subsubsection{Benchmark methods}
We compare {SNSQP} with {GUROBI} and {SALM} \cite{bai2016splitting}. {GUROBI} is used to solve a reformulation of (\ref{eq: SPS}), where the sparsity constraint is replaced by the ones in \eqref{eq: MIP of SCQP}. Its maximal runtime is set to one hour. For {SALM}, we configure the step size as $\omega = 0.3$ for Example \ref{example: SPS1} and $\omega = 1$ for Example \ref{example: SPS2}. The initial Lagrange multiplier and penalty parameter are set to $\blambda^0 = 0$  and   $\rho = 1$. For {SNSQP}, we set $\tau = 1$. Both algorithms terminate when either $\ell>1000$ or the tolerance condition $|\cdot| \leq 10^{-6}$ is met. Since {{GUROBI}} enables high-quality approximation of the global solution, we use the relative error defined by ${\rm Relerr}=|f-f_{\rm MIP}|/f_{\rm MIP}$ to measure the accuracy of {SNSQP} and {SALM}, where $f$ is their objective function values and $f_{\rm MIP}$ is the one obtained by  GUROBI. 

\subsubsection{Testing examples} 
\begin{example}[{\bf Portfolio datasets}] The dataset are drawn from the Standard and Poor's 500 (SP500) and Russell 2000 indices. Specifically, we utilize weekly return data for 468 stocks from the SP500 and 873 stocks from the Russell 2000 over the period from 2015 to 2020. The matrices $\Q$ and $\Q_1$ are derived from factor models constructed for these datasets. In addition, we randomly select $n$ stocks from the entire stock pool.
	\label{example: SPS1}
\end{example}

\begin{example}[\textbf{Synthetic data}]
\label{example: SPS2} Slightly higher-dimensional datasets are generated as follows. Let $\Q_0 = \D^\top\D$ with $\D \in \mathbb{R}^{n/4 \times n}$. Each entry of $\D$ and each diagonal entry of $\Q_1$ are independently sampled from a uniform distribution over $[0,0.01]$, while each entry of $\ba_1$ is drawn from a normal distribution $\mathcal{N}(0,0.5^2)$. We vary $n$ over set $\{1000, 1500, \dots, 3000\}$.
\end{example}

\begin{table}[!t]
	\renewcommand{\arraystretch}{0.95}\addtolength{\tabcolsep}{0.0pt}
	\centering 
	\caption{{Effects of initial points for Example \ref{example: SPS1} using the SP500 dataset with $(n,s)=(100,10)$.}}  
	\begin{tabular}{ccccccccccc} 
		\toprule 
		&\multirow{2}{*} & \multicolumn{4}{c}{ Relerr} & \multicolumn{4}{c}{Fval}  \\ 
		\cmidrule(lr){3-6} \cmidrule(lr){7-10}  
		Initials	 &Algs.&  {min} & {median} & {max} & variance &   {min} & {median} & {max} & variance \\ 
		\midrule  
		\multirow{2}{*}{$U[0,1]$}& SNSQP &  {0.118} & 0.141 & 0.292 & 0.003 & 2.21e-04 & 2.26e-04 & 2.55e-04 & 1.10e-10  \\
		&SALM &  {0.981} & 1.983 & 2.679 & 0.242 & 4.00e-04 & 7.00e-04 & 8.73e-04 & 1.36e-08  \\
		\midrule 
		\multirow{2}{*}{$\mathcal{N}(0,1)$}&SNSQP &  {0.080} & 0.111 & 0.203 & 0.002 & 2.13e-04 & 2.20e-04 & 2.38e-04 & 6.08e-11  \\
		&SALM &  {0.461} & 1.276 & 4.351 & 1.833 & 3.47e-04 & 6.00e-04 & 1.27e-03 & 1.03e-07  \\
		\midrule  
		\multirow{2}{*}{Weibull}&SNSQP &  {0.055} & 0.132 & 0.205 & 0.003 & 2.08e-04 & 2.23e-04 & 2.38e-04 & 1.04e-10  \\
		&SALM &  {0.401} & 1.341 & 2.404 & 0.581 & 3.32e-04 & 5.56e-04 & 8.08e-04 & 3.27e-08  \\
		\midrule 
		\multirow{2}{*}{$t_{10}$}&SNSQP &  {0.078} & 0.124 & 0.260 & 0.002 & 2.13e-04 & 2.22e-04 & 2.49e-04 & 1.08e-10  \\
		&SALM &  {0.357} & 1.276 & 2.786 & 0.521 & 3.27e-04 & 6.00e-04 & 8.99e-04 & 2.94e-08  \\
		\midrule 
		\multirow{2}{*}{CVX}&SNSQP &  {0.012} & 0.012 & 0.012 & 0.000 & 1.98e-04 & 1.98e-04 & 1.98e-04 & 0.00e-00  \\
		&SALM &  {0.053} & 0.053 & 0.053 & 0.000 & 2.50e-04 & 2.50e-04 & 2.50e-04 &  0.00e-00  \\
		\midrule 
		\multirow{2}{*}{Overall}&SNSQP &  {0.012} & 0.118 & 0.292 & 0.005 & 1.98e-04 & 2.21e-04 & 2.55e-04 & 2.00e-10  \\
		&SALM &  {0.053} & 1.250 & 4.351 & 1.042 & 2.50e-04 & 6.00e-04 & 1.27e-03 & 5.87e-08  \\
		\bottomrule 
	\end{tabular}
	\label{table: SPS-eff-init}   
\end{table}
\subsubsection{Numerical comparison}
\textit{a) Effect of initial points.} To see this, 
 	we consider 5 types of initializations again for both SNSQP and SALM. The first four types of starting points are generated the same as those in Section \ref{subsub-com-starting}. The last type also uses sparse vectors as initial points. Specifically, we first apply CVX to solve \eqref{example: SPS1} without the sparsity constraint to obtain a solution, and then derive a sparse initial point by retaining its $s$ largest (in absolute value) entries while setting the remaining ones to zero. Table \ref{table: SPS-eff-init} reports the best, median, and worst results over 50 independent trials for each type of initial points, as well as the results over all 250 trials. In all cases, SNSQP attains lower Relerr and Fval values than SALM. Moreover, the substantially smaller variances achieved by SNSQP indicate that it is more robust to initial points than SALM. Since the initialization obtained via CVX delivers the best overall performance, we adopt it for all subsequent numerical experiments.

\begin{table}[!t]
\renewcommand{\arraystretch}{0.95}\addtolength{\tabcolsep}{1pt}
	\centering
	\caption{Comparison with two methods for Example \ref{example: SPS1}} 
	\begin{tabular}{ccccccccccr}
\toprule 
 &  & \multicolumn{2}{c}{Relerr} & \multicolumn{3}{c}{Fval} & \multicolumn{3}{c}{Time(s)} \\
\cmidrule(lr){3-4} \cmidrule(lr){5-7} \cmidrule(lr){8-10}  
$n$	& $s$ & {SNSQP} & {SALM} & {SNSQP} & {SALM} & {GUROBI} & {SNSQP} & {SALM} & {GUROBI} \\
\midrule
\multicolumn{10}{c}{{\tt SP500}} \\\midrule
 {50}                   & 5  &  {0.032} & 0.048   & 2.88e-04 & 2.90e-04 & 2.79e-04     &  {0.011} & 0.069 & 0.336       \\
& 10 &  {0.004} & 0.008   & 2.49e-04 & 2.51e-04 & 2.48e-04     &  {0.030} & 0.190 & 0.455      \\  
 {100} & 5  &  {0.085} & 0.114    & 2.89e-04 & 2.97e-04 & 2.66e-04    &  {0.013} & 0.527 & 1.092       \\  
                     & 10 &  {0.013} & 0.024    & 2.45e-04 & 2.48e-04 & 2.42e-04    &  {0.051} & 0.996 & 2.526       \\ 
                     
 {200} & 5  &  {0.127} & 0.135    & 2.52e-04 & 2.54e-04 & 2.24e-04    &  {0.072} & 1.156 & 5.279       \\ 
                     & 10 &  {0.003} & 0.013     & 1.96e-04 & 1.98e-04 & 1.95e-04   &  {0.170} & 1.789 & 12.42      \\ 
                     
 {300} & 5  &  {0.044} & 0.086    & 2.38e-04 & 2.48e-04 & 2.28e-04    &  {0.180} & 2.880 & 29.60       \\ 
                     & 10 &  {0.060} & 0.091    & 1.90e-04 & 1.95e-04 & 1.79e-04    &  {0.199} & 3.724 & 126.88     \\ 
                     
 {400} & 5  &  {0.063} & 0.075     & 2.52e-04 & 2.55e-04 & 2.37e-04   &  {0.070} & 2.696 & 116.16     \\
                     & 10 &  {0.037} & 0.092     & 1.86e-04 & 1.96e-04 & 1.79e-04   &  {0.091} & 5.069 & 1379.6   \\  \midrule
                     \multicolumn{10}{c}{{\tt Russell2000}} \\
        \midrule
 {50}                   & 5  &  {0.016} & 0.027   & 5.22e-04 & 5.28e-04 & 5.16e-04     &  {0.020} & 0.081 & 0.233       \\
& 10 &  {0.008} & 0.150   & 4.13e-04 & 4.22e-04 & 4.09e-04     &  {0.014} & 0.043 & 0.320      \\ 

 {200} & 5  &  {0.016} & 0.039    & 2.70e-04 & 2.77e-04 & 2.66e-04    &  {0.061} & 1.421 & 11.66       \\  
                     & 10 &  {0.002} & 0.010    & 2.71e-04 & 2.73e-04 & 2.70e-04    &  {0.035} & 0.836 & 7.501       \\ 
                     
 {400} & 5  &  {0.006} & 0.035    & 2.09e-04 & 2.16e-04 & 2.08e-04    &  {0.031} & 4.023 & 804.93       \\ 
                     & 10 &  {0.019} & 0.034     & 2.08e-04 & 2.12e-04 & 2.04e-04   &  {0.088} & 5.688 & 830.44      \\ 
                     
 {600} & 5  &  {0.002} & 0.016    & 2.08e-04 & 2.10e-04 & 2.07e-04    &  {0.038} & 8.602 & 510.25       \\ 
                     & 10 &  {0.012} & 0.045    & 2.43e-04 & 2.50e-04 & 2.40e-04    &  {0.041} & 9.542 & 3593.1     \\ 
                     
{800} & 5  &  {0.000} & 0.014     & 2.04e-04 & 2.07e-04 & 2.04e-04   &  {0.052} & 15.82 & 2951.6     \\
                     & 10 &  {0.007} & 0.038     & 1.61e-04 & 1.66e-04 & 1.59e-04   &  {0.061} & 12.08 & 3600.0    \\
                     
                     \bottomrule 
\end{tabular}
	\label{table: SPS1}
\end{table}

\begin{table}[!t]
\renewcommand{\arraystretch}{0.95}\addtolength{\tabcolsep}{0.5pt}
\centering
\caption{Comparison for Example \ref{example: SPS2} in higher dimensions. }
\label{table: SPS2}
\begin{tabular}{ccccc ccc ccrr}
 \hline
 & \multicolumn{5}{c}{$s = 5$} &&  \multicolumn{5}{c}{$s = 10$} \\\cline{2-6} \cline{8-12} 
  & \multicolumn{2}{c}{Fval}  && \multicolumn{2}{c}{Time(s)}&&\multicolumn{2}{c}{Fval}  && \multicolumn{2}{c}{Time(s)} \\
  \cline{2-3}\cline{5-6}\cline{8-9}\cline{11-12}
$n$ & {SNSQP}& {SALM} && {SNSQP}& {SALM} && {SNSQP}& {SALM} && {SNSQP}& {SALM}\\\hline
1000  &  {5.83e-03}& 5.84e-03 &&  {0.012}& 13.18 &&  {5.69e-03}& 5.71e-03      &&  {0.029}& 8.280 \\
1500 &  {8.43e-03}& 8.46e-03      &&  {0.042}& 46.31 &&  {8.47e-03}& 8.49e-03      &&  {0.047}& 29.92 \\
2000 &  {1.16e-02}& 1.17e-02      &&  {0.030}& 104.6 &&  {1.14e-02}& 1.15e-02      &&  {0.032}& 91.58 \\
2500 &  {1.50e-02}& 1.51e-02      &&  {0.032}& 219.6 &&  {1.46e-02}& 1.48e-02      &&  {0.035}& 180.0 \\
3000 &  {1.80e-02}& 1.82e-02      &&  {0.039}& 443.4 &&  {1.75e-02}& 1.76e-02      &&  {0.042}& 406.8 \\
\hline
\end{tabular}
\end{table}

\textit{b) Performance of solving SPS problems using low dimensional datasets.} We compare {SNSQP} with {SALM} and GUROBI for solving Example \ref{example: SPS1}.	As shown in Table \ref{table: SPS1}, {SNSQP} consistently achieves lower Relerr and Fval than {SALM} across all cases for both datasets, indicating superior solution quality. Additionally, {SNSQP} demonstrates a significant speed advantage over its counterparts in every scenario, particularly in high-dimensional cases. For instance, on the Russell2000 dataset with $(n,s)=(800,5)$, the computational time of {SNSQP}, {SALM}, and {GUROBI} is 0.052, 15.82, and 2951.6 seconds, respectively. 

\textit{c)  Performance of solving SPS problems using higher-dimensional datasets.} We examine the numerical comparison for slightly higher-dimensional scenarios based on Example \ref{example: SPS2}. Since {{GUROBI}} requires a significantly longer time to solve the problem, it is excluded from the comparison. The results in Table \ref{table: SPS2} show that {SNSQP} not only achieves lower Fval but also runs much faster than {SALM} across all test instances.

\section{Conclusion} 
It is known that SQCQP is a computationally challenging problem, particularly in large-scale or high-dimensional settings. In contrast to existing methods, which primarily focus on solving its mixed-integer programming reformulations or relaxations, we introduce a novel paradigm by designing an efficient semismooth Newton-type algorithm, SNSQP, that directly tackles SQCQP. The key innovation of our approach lies in the formulation of a newly introduced system of stationary equations, which characterizes the optimality conditions of the original problem. The algorithm is classified as a second-order method and thus exhibits a locally quadratic convergence rate while maintaining relatively low computational complexity due to the sparse structure of the solution. Extensive numerical experiments demonstrate that the algorithm consistently produces high-accuracy solutions with fast computational speed. However, given the challenges in establishing global convergence, the method remains local. Considering its strong numerical performance and extensive applications, ensuring its global convergence is worthy of further research.

\section*{Acknowledgments}
This work is supported by the National Key R\&D Program of China (2023YFA1011100) and the National Natural Science Foundation of China (No. 12271022).

\bibliographystyle{abbrv}
\bibliography{references}


\end{document}